\definecolor{mylinkcolor}{rgb}{0.8,0,0}
\definecolor{myurlcolor}{rgb}{0,0,0.8}
\definecolor{mycitecolor}{rgb}{0,0,0.8}
\newcommand\myeq{\stackrel{\mathclap{\normalfont\mbox{\small av}}}{=}}
\newtheorem{defn}{Definition}[section]
\newtheorem{lemma}[defn]{Lemma}
\newtheorem{thm}[defn]{Theorem}
\newtheorem{theorem}[defn]{Theorem}
\newtheorem{cor}[defn]{Corollary}
\newtheorem{prop}[defn]{Proposition}
\newtheorem{conj}[defn]{Conjecture}
\newtheorem{hypa}[defn]{Hypothesis}
\newtheorem{hypb}[defn]{Hypothesis}
\theoremstyle{definition}
\newtheorem*{ack}{Acknowledgements}
\newtheorem{remark}[defn]{Remark}
\newtheorem{question}[defn]{Question}
\newtheorem{example}[defn]{Example}
\newcommand{\R}{\mathbb R}
\newcommand{\Q}{\mathbb Q}
\newcommand{\Z}{\mathbb Z}
\newcommand{\N}{\mathbb N}
\newcommand{\F}{\mathbb F}
\newcommand{\s}{\mathcal{S}}
\newcommand{\rr}{\mathcal{R}}
\newcommand{\wee}{\widetilde{\mathcal{E}}}
\newcommand{\wss}{\widetilde{\mathcal{S}}}
\newcommand{\wrr}{\widetilde{\mathcal{R}}}
\newcommand{\wtt}{\widetilde{\mathcal{T}}}
\newcommand{\wts}{\widetilde{\mathcal{TS}}}
\newcommand{\wtr}{\widetilde{\mathcal{TR}}}
\newcommand{\TT}{\mathbf{T}}
\newcommand{\MW}{\operatorname{MW}}
\newcommand{\E}{\mathcal{E}}
\newcommand{\rank}{\operatorname{rank}}
\newcommand{\rk}{\operatorname{rk}}
\newcommand{\selrank}{\operatorname{selrank}}
\newcommand{\h}{\operatorname{ht}}
\newcommand{\Sel}{\operatorname{Sel}}
\newcommand{\Rr}{\mathcal{R}_r}
\newcommand{\Var}{\operatorname{Var}}
\newcommand{\Cov}{\operatorname{Cov}}
    \DeclareFontFamily{U}{wncy}{}
    \DeclareFontShape{U}{wncy}{m}{n}{<->wncyr10}{}
    \DeclareSymbolFont{mcy}{U}{wncy}{m}{n}
    \DeclareMathSymbol{\Sh}{\mathord}{mcy}{"58}
\begin{document}



\bibliographystyle{plain}
\title[Distribution of Ranks of Elliptic Curves]{A probabilistic model for the distribution of ranks of elliptic curves over $\Q$}

\author{\'Alvaro Lozano-Robledo}
\address{Department of Mathematics, University of Connecticut, Storrs, CT 06269, USA}
\email{alvaro.lozano-robledo@uconn.edu}
\urladdr{http://alozano.clas.uconn.edu/}


\subjclass[2010]{Primary: 11G05, Secondary: 14H52.}

\begin{abstract} 
In this article, we propose a new probabilistic model for the distribution of ranks of elliptic curves in families of fixed Selmer rank, and compare the predictions of our model with previous results, and with the databases of curves over the rationals that we have at our disposal. In addition, we document a phenomenon we refer to as {\it Selmer bias} that seems to play an important role in the data and in our models.
\end{abstract}

\maketitle

\section{Introduction}\label{sec-intro}
Let $E/\Q$ be an elliptic curve. The Mordell--Weil theorem states that the group $E(\Q)$ of rational points on $E$ is finitely generated and, therefore, we have an isomorphism
$$E(\Q)\cong E(\Q)_\text{tors} \oplus \Z^{R_E},$$
where $E(\Q)_\text{tors}$ is the (finite) subgroup of points of finite order, and $R_E = \rank (E(\Q)) \geq 0$ is the rank of the elliptic curve. The torsion subgroups that arise over $\Q$ are well understood: Mazur's theorem settles what groups are possible (\cite{mazur1}, \cite{mazur2}), the parametrization of the corresponding modular curves are known (\cite{kubert}), and we know the distribution of elliptic curves with a prescribed torsion subgroup (\cite{harron}) as a function of the height of the curve. However, the  distribution of ranks of elliptic curves is unknown. Several conjectures can be found in the literature (e.g., on the average rank, see \cite{poonen}), and also some heuristic models (\cite{watkins}, \cite{ppvm}), but the basic questions about the distribution of the ranks remain unanswered. For instance, it is not known whether the rank can be arbitrarily large (currently, the largest rank known is $28$, due to Noam Elkies - see \cite{dujella} for Elkies' example, and other current records).

In this article, we propose a new probabilistic model for the distribution of ranks of elliptic curves (in families of fixed $2$-Selmer rank) and explore its possible consequences. The model itself is built on a probability space  of {\it test elliptic curves} and {\it test Selmer elements} in the spirit of Cram\'er's model for the prime numbers (see \cite{cramer}, \cite{granville}). As such, our model is a collection $\mathbf T$ of all possible sequences of (finite) sets of test elliptic curves of each height (with certain growth conditions as the height grows). The sequence of ordinary elliptic curves $\mathcal{E}$ over $\Q$ belongs to this class, and we make predictions about $\mathcal{E}$ from the asymptotic average behavior from sequences in $\mathbf T$ under the assumption of certain probabilistic hypotheses (see Sections \ref{sec-intromodel}, \ref{sec-cramermodel}, and \ref{sec-cramermodel2} for more details). We use the largest database of elliptic curves at our disposal (\cite{BHKSSW}, which we will refer to as the BHKSSW database) in order to test our model and to make predictions. We concentrate on elliptic curves over $\Q$ because there are no analogous databases for any other number field $K$ or function field $\mathbb{F}(T)$ to test the model, but the same ideas would apply more generally for $p$-Selmer groups of abelian varieties over $K$ or $\mathbb{F}(T)$, with suitable modifications of the probability functions $\Theta_n(X)$ and $\rho_n(X)$ defined below in Hypothesis $C$. 

\subsection{Notation and setup for elliptic curves over $\Q$} For an elliptic curve $E/\Q$ we define the ($2$-)Selmer rank of an elliptic curve by  $\selrank(E(\Q))=\dim_{\F_2}\Sel_2(E/\Q)-\dim_{\F_2}(E(\Q)[2])$, where $\Sel_2(E/\Q)$ is the $2$-Selmer group attached to $E/\Q$. 
\begin{itemize}
\item For fixed $n,r\geq 0$, and for any $1\leq X_1\leq X_2$, let $\mathcal{E}([X_1,X_2])$, $\s_n([X_1,X_2])$, and $\Rr([X_1,X_2])$ be, respectively, the sets of all elliptic curves, all curves with Selmer rank $n$, and curves of rank $r$, with naive height in the interval $[X_1,X_2]$. 
\item We will denote the set of elliptic curves of height exactly $X$ by  $\mathcal{E}^X=\mathcal{E}([X,X])$, and we will write $\mathcal{E}(X)=\mathcal{E}([1,X])$ for the set of all elliptic curves up to height $X$. We define similarly $\s_n^X$, $\s_n(X)$, and $\Rr^X$, $\Rr(X)$, for each $n,r\geq 0$.
\item If $\mathcal{C}\subseteq \mathcal{E}$ is a set of elliptic curves (say $\mathcal{C}=\mathcal{E}$, $\mathcal{S}_n$, $\Rr$, or  $\Rr\cap \s_n$), then we write $\pi_{\mathcal{C}}(X)$ for $\# \mathcal{C}([1,X])$, i.e., $\pi_{\mathcal{C}}$ is the counting function of elliptic curves in $\mathcal{C}$ up to height $X$.
\end{itemize} 
For a fixed rank $r\geq 0$ and a height $X$ such that the set  $\mathcal{E}^X$ is non-empty, we are interested in the probability that an elliptic curve of height $X$ belongs to $\Rr^X$, that is, $\operatorname{Prob}(E\in\Rr^X)=\# \Rr^X/\# \mathcal{E}^X$. Our model is aimed at giving meaning and estimating $\operatorname{Prob}(E\in\Rr^X)$ via the probability formula:
$$\operatorname{Prob}(E\in\Rr^X)=\sum_{j\geq 0} \operatorname{Prob}(E\in \s_{r+2j}^X)\cdot \operatorname{Prob}(E\in\Rr^X\ | \ E\in \s_{r+2j}^X),$$
where $\operatorname{Prob}(E\in \s_{r+2j}^X)=\# \s_{r+2j}^X/\# \mathcal{E}^X$, and we define the conditional probability $\operatorname{Prob}(E\in\Rr^X\ | \ E\in \s_{r+2j}^X)$ as $0$ if $\s_{r+2j}^X$ is empty, and by $\# \Rr^X\cap \s_{r+2j}^X/\# \s_{r+2j}^X$ otherwise. 
In Section \ref{sec-allcurves} we will discuss the known results about the number of elliptic curves up to height $X$. 

\subsection{Notation and setup for test elliptic curves} A {\it test elliptic curve} is a triple $E=(X,n,\Sel_2)$ consisting of:
\begin{itemize}
	\item a positive integer $X\geq 1$, the height of $E$, also denoted $X=\h(E)$,
	\item a non-negative integer $n$, the Selmer rank of $E$, also denoted $n=\selrank(E)$, and 
	\item a vector $\Sel_2(E)=(s_{E,1},s_{E,2},\ldots,s_{E,\lfloor n/2 \rfloor})$ of $\lfloor n/2 \rfloor$ {\it test Selmer elements}. Each Selmer element is a symbol, which is either a MW, or a $\Sh$ symbol. 
\end{itemize}
The set of all test elliptic curves will be denoted by $\widetilde{\E}$, those test curves with height $X$ will be $\widetilde{\E}^X$, and those test curves with height $X$ and Selmer rank $n$ will be denoted by $\widetilde{\s}_n^X$. We define $\widetilde{\mathcal{R}}_r^X$ similarly. We let $\mathbf{T}$ be a space of sequences of (finite) subsets of $\wee^X$ with certain growth conditions, defined as follows: 
	$$\mathbf{T} = \left\{ \left(\wtt^X\right)_{X\geq 1}  : \wtt^X \subseteq \wee^X, \ \sum_{N=1}^X \# \wtt^N = \kappa X^{5/6}+O(X^{1/2})\right\},$$
	where $\kappa = 2^{4/3}\cdot(\zeta(10)\cdot 3^{3/2})^{-1}$. To each elliptic curve $E$ we can associate a test elliptic curve (Remark \ref{rem-testellipticcurve}) and the sequence $\mathcal{E}=(\mathcal{E}^X)$ of ordinary elliptic curves belongs to $\mathbf T$. Thus, the goal is to predict the behaviour of $\E$ from the average asymptotic behaviour of sequences in $\wtt$.

	We also need to introduce counting notation for test elliptic curves: if $I$ is a finite interval in $[1,\infty)$, and $\wtt \in \mathbf{T}$, we will write $\wtt(I)= \bigcup_{X\in I} \wtt^X$, and $\wts_n(I) = \bigcup_{X\in I} \wtt^X \cap \wss_n^X$. Finally, we define 
	$$\pi_{\wtt}(I) = \#\wtt(I)=\sum_{X\in I} \# \wtt^X\ \text{ and }\ \pi_{\wts_n}(I) = \# \wts_n(I)= \sum_{X\in I} \# (\wtt^X\cap \wss_n^X).$$

\subsection{Probability spaces}\label{sec-intromodel} In Sections \ref{sec-cramermodel} and \ref{sec-cramermodel2}, and for fixed $n\geq 0$ and $X\geq 1$, we state two probabilistic hypotheses, $H_A$ and $H_B$ stated formally in Hypotheses \ref{hypa} and \ref{hypb}, respectively. These hypotheses make $\widetilde{\mathcal{E}}^X$ and $\widetilde{\s}_n^X$ into probability spaces:
\begin{enumerate} 
\item[($H_A$)] {\bf Hypothesis A}: Informally, the probability of drawing a test elliptic curve of Selmer rank $n$ out of the bin $\widetilde{\E}^X$ is given by a function $\theta_n(X)$. Formally, the function  $Y_{\Sel,n,X}\colon\widetilde{\mathcal{E}}^X\to \{0,1\}$ such that  $Y_{\Sel,n,X}(E)=1$ if $E\in\widetilde{\s}_n^X$, and $Y_{\Sel,n,X}(E)=0$ otherwise, is a random variable with Bernoulli distribution $B(1,\theta_n(X))$, where $\theta_n(X)$ is a function that depends on $n$ and $X$. In particular, this implies that the expected value $\mathbb{E}(Y_{\Sel,n,X})$ is $\operatorname{Prob}(E\in \widetilde{\s}_{n}^X)=\theta_n(X)$. 
\item[($H_B$)] {\bf Hypothesis B}: Let $E\in \wss_n^X$ be chosen at random. Informally, the probability that the $i$-th coordinate of $\Sel_2(E)=(s_{E,1},\ldots,s_{E,\lfloor n/2 \rfloor})$ is a MW element is given by a function $\rho_n(X)$ (that does not depend on $i$ or $E$). Formally, for each $1\leq i \leq \lfloor n/2\rfloor$, the function  $Y_{i}\colon\wss_n^X \to \{0,1\}$ that takes the value $1$ whenever $s_{E,i}$ is a MW element, and $0$ otherwise, is a random variable with Bernoulli distribution $B(1,\rho_n(X))$, where $\rho_n(X)$ is a function that depends on $n$ and $X$, but not on $i$ (however, the variables $Y_i$ are not independent in general). From the distribution of the variables $Y_i$  we shall recover the conditional probability $\operatorname{Prob}(E\in\widetilde{\mathcal{R}}_r^X\ | \ E\in \widetilde{\s}_{n}^X)$  for any $0\leq r\leq n$ with $n\equiv r \bmod 2$ (see Corollary \ref{cor-hasse1}).
\end{enumerate} 
After taking all the available data under consideration (mainly  \cite{BHKSSW}), we formulate a refinement of the model which specifies the shape of $\theta_n(X)$ and $\rho_n(X)$ up to some constants (which are Hypotheses \ref{conj-selmerratio} and \ref{conj-hasseratio}):

\begin{enumerate}
	\item[($H_C$)] {\bf Hypothesis C}: Assume $H_A$ and $H_B$. Then, there are constants $C_n$, $D_n$, $e_n$, $f_n$, for each $n\geq 1$, such that 
	$$\theta_n(X) = \frac{s_n}{1+C_nX^{-e_n}},\ \text{ and }\ \rho_n(X) = \frac{D_n}{X^{f_n}},$$
	where the limit values $s_n$ of $\theta_n(X)$ are those given by a conjecture of Poonen and Rains, and all constants are positive except $C_1<0$.
\end{enumerate}

The data suggest that for the family of all elliptic curves over $\Q$ the values of the constants of Hypothesis C, for $n=1,\ldots,5$, are as given in Tables \ref{tab3} and  \ref{tab-rhomodel}, and the limit values $s_n$ are discussed in Section \ref{sec-selmer-ratio} (as in   \cite{poonen}). We have also investigated the suitability of the model in the subfamily of curves with $j=1728$ (see Remark \ref{rem-j1728}).

\subsection{Summary of results} In our results, we give the expected value and asymptotic behavior of a random sequence $\wtt$ in $\mathbf{T}$ under the probabilistic hypotheses $A$, $B$, and $C$. In our main Theorems \ref{thm-predictrank} and \ref{thm-averank}, under the assumption of $H_A$ and $H_B$, we provide formulas for $\pi_{\wtr_r\cap \widetilde{\s}_n}(X)$, i.e., the number of test elliptic curves in $\wtt$ of rank $r$ and Selmer rank $n$ up to height $X$, and also for the contribution to the average rank coming from test elliptic curves of Selmer rank $n$. Note that our results are stated ``on average'' (denoted by $\myeq$, a concept that we define precisely in Definition \ref{defn-inave}).
\begin{theorem}[also Theorem \ref{thm-predictrank}]\label{thm-predictrankintro}
Let $\wtt\in\mathbf{T}$ be arbitrary, and let $X,r\geq 0,j\geq 0$ be fixed, such that $n(j)=r+2j\geq 1$. If we assume Hypothesis C, then  the expected value of $\pi_{\wtr_r\cap \widetilde{\s}_{n(j)}}(X)$ is given on average by
\begin{eqnarray*} \mathbb{E}\left( \pi_{\wtr_r\cap \widetilde{\s}_{n(j)}}(X)\right) &\myeq  & 
 \frac{5\kappa}{6} \binom{\lfloor \frac{r}{2} \rfloor+j}{j} \int_{0}^X \cfrac{\theta_{n(j)}(H)}{H^{1/6}}\cdot \mathbb{E}_{\lfloor \frac{r}{2} \rfloor,j}^{n(j)}(H)\, dH + O(X^{1/2})\\ 
 &\myeq & \frac{5\kappa}{6}  \binom{\lfloor \frac{r}{2} \rfloor+j}{j} \int_{0}^X \cfrac{ s_{n(j)}\cdot \mathbb{E}_{\lfloor \frac{r}{2} \rfloor,j}^{n(j)}(H)}{(1+C_{n(j)}H^{-e_{n(j)}})\cdot H^{1/6}}\, dH + O(X^{1/2}),
 \end{eqnarray*} 
where $\kappa = 2^{4/3}\cdot(\zeta(10)\cdot 3^{3/2})^{-1}$, and $\mathbb{E}_{\lfloor \frac{r}{2} \rfloor,j}^{n(j)}(H)$ is the expected value defined in Remark \ref{rem-notation}.
\end{theorem}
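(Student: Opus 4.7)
The plan is to express $\pi_{\wtr_r\cap\wss_{n(j)}}(X)$ as a sum of local contributions at each height $H\leq X$, invoke Hypotheses $H_A$ and $H_B$ at that fixed height, and then convert the resulting discrete sum into an integral by partial summation, using the growth condition that defines $\mathbf T$.

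First I would decompose
\[
\pi_{\wtr_r\cap\wss_{n(j)}}(X)=\sum_{H=1}^X \#\bigl(\wtt^H\cap\wss_{n(j)}^H\cap\wtr_r^H\bigr),
\]
so that the expected size of the $H$-th summand is $\#\wtt^H$ times the probability that a uniformly drawn element of $\wtt^H\subseteq\wee^H$ has Selmer rank $n(j)$ and rank $r$. Hypothesis $H_A$ supplies $\operatorname{Prob}(E\in\wss_{n(j)}^H)=\theta_{n(j)}(H)$. For the conditional probability of rank $r$ given Selmer rank $n(j)=r+2j$, I would quote Corollary \ref{cor-hasse1}: the rank equals $r$ precisely when exactly $\lfloor r/2\rfloor$ of the $\lfloor n(j)/2\rfloor=\lfloor r/2\rfloor+j$ coordinates of the test Selmer vector are MW symbols. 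By Hypothesis $H_B$, any one such prescribed configuration has joint probability $\mathbb{E}_{\lfloor r/2\rfloor,j}^{n(j)}(H)$ (this is exactly what the non-independence of the $Y_i$ forces us to package into a single quantity), and there are $\binom{\lfloor r/2\rfloor+j}{j}$ configurations. Multiplying the two layers gives
\[
\operatorname{Prob}\bigl(E\in\wtr_r^H\cap\wss_{n(j)}^H\bigr)=\binom{\lfloor r/2\rfloor+j}{j}\theta_{n(j)}(H)\,\mathbb{E}_{\lfloor r/2\rfloor,j}^{n(j)}(H),
\]
so the problem reduces to evaluating $\sum_{H=1}^X \#\wtt^H\cdot g(H)$ with $g(H):=\theta_{n(j)}(H)\mathbb{E}_{\lfloor r/2\rfloor,j}^{n(j)}(H)$, up to the factor $\binom{\lfloor r/2\rfloor+j}{j}$.

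The last step is partial summation. Set $T(Y):=\sum_{N=1}^Y\#\wtt^N$, so that $T(Y)=\kappa Y^{5/6}+O(Y^{1/2})$ by the defining condition of $\mathbf T$. Abel summation yields
\[
\sum_{H=1}^X \#\wtt^H\cdot g(H)=T(X)g(X)-\int_1^X T(H)g'(H)\,dH,
\]
and substituting the leading term $\kappa H^{5/6}$ for $T(H)$ inside the integral and then integrating $\kappa H^{5/6}g'(H)$ back by parts produces the main term $\tfrac{5\kappa}{6}\int_0^X g(H)H^{-1/6}\,dH$. This establishes the first displayed equality of the theorem, and substituting the explicit shape $\theta_{n(j)}(H)=s_{n(j)}/(1+C_{n(j)}H^{-e_{n(j)}})$ from Hypothesis $H_C$ into $g(H)$ yields the second.

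The main obstacle is quantitative control in the partial-summation step: one must propagate the $O(Y^{1/2})$ remainder in $T(Y)$ through the integral against $g'$ while still obtaining an $O(X^{1/2})$ error. This requires that $g$ be smooth and that $|g'(H)|$ decay fast enough for $\int_1^X H^{1/2}|g'(H)|\,dH=O(X^{1/2})$, both of which follow from the explicit forms of $\theta_{n(j)}$ and $\rho_{n(j)}$ in Hypothesis $H_C$ (a rational function in $H^{-e_{n(j)}}$ bounded by $s_{n(j)}$, and a monomial in $H^{-f_{n(j)}}$, respectively). A separate trivial estimate handles the contribution of a bounded initial range $H\in[1,H_0]$, where the explicit formulas may degenerate but the full contribution is absorbed into the $O(X^{1/2})$ error.
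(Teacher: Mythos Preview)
Your argument follows the same overall strategy as the paper: decompose $\pi_{\wtr_r\cap\wss_{n(j)}}(X)$ height by height, apply $H_A$ for the factor $\theta_{n(j)}(H)$ and Corollary~\ref{cor-hasse1} for the conditional rank probability $\binom{\lfloor r/2\rfloor+j}{j}\mathbb{E}_{\lfloor r/2\rfloor,j}^{n(j)}(H)$, and then convert the discrete sum over heights into an integral. The difference is purely in how that last conversion is carried out. The paper invokes its ``on average'' machinery (Corollary~\ref{cor-countcurvesaverage}, Lemma~\ref{lem-inave}, Corollary~\ref{cor-travelratio}), which already packages the approximation $\pi_{\wtt}([H,H])\approx (5\kappa/6)H^{-1/6}$ together with the $O(X^{1/2})$ error, and then simply multiplies by the bounded weight $g(H)$. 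You instead use Abel summation directly against $T(Y)=\sum_{N\le Y}\#\wtt^N$, which is a perfectly standard alternative and arguably more transparent.

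One small caveat: your error analysis needs $\int_1^X H^{1/2}|g'(H)|\,dH=O(X^{1/2})$, and you justify this via the explicit shapes of $\theta_{n(j)}$ and $\rho_{n(j)}$ from Hypothesis~C. But $\mathbb{E}_{\lfloor r/2\rfloor,j}^{n(j)}(H)$ also depends on the covariance functions $C_{s,t}^{n(j)}(H)$ (see Lemma~\ref{lem-equicorr3}), and Hypothesis~C does not prescribe their form. The paper's route sidesteps this entirely because Lemma~\ref{lem-inave} only needs $|g|\le 1$, not control on $g'$. Your approach can be patched (e.g.\ by the trivial bound $0\le\mathbb{E}_{s,t}^{n}\le 1$ together with a second summation by parts, or by assuming mild regularity of the covariances), but as written this step is not fully justified by Hypothesis~C alone.
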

In Corollary \ref{cor-predictrank2} we specialize the formulas of $\pi_{\wtr_r\cap \widetilde{\s}_{n(j)}}(X)$ for $0\leq r\leq n \leq 5$ (see also Table \ref{tab-predictrank}). Using our formulas, we have computed approximations of $\pi_{\Rr}(X)$ for $1\leq r\leq 5$ in the range $[0,2.7\cdot 10^{10}]$, and plotted them in Figures \ref{fig-predictrank123} and \ref{fig-predictrank45}. The error in our approximations is less than $0.7\%$ in this range, which is within the order of magnitude of the error predicted by the model (see Table \ref{tab-predictrank2}).

Our second theorem gives  formulas for the contribution to the average rank of test elliptic curves coming from curves of each Selmer rank $n\geq 1$. Then, the contributions are added up to estimate the behavior of the average rank.
\begin{thm}[also Theorem \ref{thm-averank} and Corollary \ref{cor-averank}]\label{thm-averankintro}
Let $\wtt\in\mathbf{T}$ be arbitrary. Assume $H_A$ and $H_B$, and let $n\geq 1$ be fixed. Then, the expected value of $\operatorname{AvgRank}_{\wts_n}(X)=\cfrac{1}{\pi_{\wtt}(X)}\cdot \sum_{E\in \wts_n(X)} \rank(E)$
is given on average by
\begin{eqnarray*} \frac{5\kappa}{6\pi_{\wtt}(X)} \cdot\int_1^X  \frac{\theta_n(H)}{H^{1/6}} \left((n\bmod 2) + 2\left\lfloor{\frac{n}{2}}\right\rfloor \rho_n(H)\right)\, dH + O(X^{-1/3}),
\end{eqnarray*}
where the implied error in the approximation is bounded by $C\cdot \theta_{n}^\text{sup}/X^{1/3}$, for some constant $C$ that does not depend on $n$. Moreover, the error in approximating $\operatorname{AvgRank}_{\wts_n}(X)$ by its expected value is given, on average, by 
$$\sqrt{\frac{5\kappa \lfloor n/2 \rfloor}{6\pi_{\wtt}(X)^2} \int_1^X \frac{\theta_n(H)}{H^{1/6}} (\rho_n(H)(1-\rho_n(H)) + (\lfloor n/2 \rfloor -1) C_{1,1}^n(H))\, dH + O(X^{-7/6})},$$
where $C_{1,1}^n(X)$ is the covariance function defined in Proposition \ref{prop-equicorr}. Further, there are constants $\tau_n$ such that the expected value of $\operatorname{AvgRank}_{\wtt}(X)=\sum_{n=1}^\infty 
\operatorname{AvgRank}_{\wts_n}(X)$ is given on average by 
$$
 \sum_{n=1}^\infty s_n \cdot  \left(\frac{\tau_n}{X^{5/6}} +  \sum_{m=0}^\infty  \left(\frac{(n\bmod 2)(-C_n)^m}{1-(6/5)me_n} +  X^{-f_n} \frac{2\left\lfloor{\frac{n}{2}}\right\rfloor D_n (-C_n)^m}{1-(6/5)(f_n+me_n)}\right)X^{-me_n}\right) + O(X^{-1/3}).$$
In particular,
$$\lim_{X\to \infty} \operatorname{AvgRank}_{\wtt}(X) \myeq \sum_{k=0}^\infty s_{2k+1}=\frac{1}{2},$$
in the sense that the expected value goes on average to $1/2$ and the standard error goes to $0$ on average as $X\to \infty$.
\end{thm}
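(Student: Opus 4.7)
The strategy is to apply Hypotheses $H_A$, $H_B$, $H_C$ to a Bernoulli expression for the rank, pass from a sum over heights to an integral via the defining growth condition on $\mathbf{T}$, and then expand the result in closed form.

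For each $E \in \wss_n^X$, the rank decomposes as $\rank(E) = (n \bmod 2) + 2 \sum_{i=1}^{\lfloor n/2 \rfloor} Y_i(E)$, since each MW test Selmer element contributes $2$ to the rank and each $\Sh$ element contributes $0$. By $H_A$, the indicator $\mathbf{1}_{E\in\wss_n^X}$ is $B(1,\theta_n(X))$; by $H_B$, each $Y_i$ is $B(1,\rho_n(X))$. Linearity of expectation then gives
$$\mathbb{E}\!\left[\rank(E)\cdot\mathbf{1}_{E\in\wss_n^X}\right] = \theta_n(X)\bigl((n \bmod 2) + 2\lfloor n/2 \rfloor \rho_n(X)\bigr).$$
Summing over $E\in\wtt^H$ and then over $H\leq X$ produces a double sum that I would convert to an integral by Abel summation against the defining asymptotic $\sum_{N\leq X}\#\wtt^N = \kappa X^{5/6}+O(X^{1/2})$, which yields the effective density $\tfrac{5\kappa}{6}H^{-1/6}$. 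Dividing by $\pi_\wtt(X)$ gives the first displayed formula for $\mathbb{E}[\operatorname{AvgRank}_{\wts_n}(X)]$; the $O(X^{1/2})$ remainder from the growth condition, pushed through the integration and normalization, becomes the asserted $\theta_n(X)\cdot O(X^{-1/3})$ error.

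To bound $|\operatorname{AvgRank}_{\wts_n}(X) - \mathbb{E}[\operatorname{AvgRank}_{\wts_n}(X)]|$ I would compute its standard deviation. Since the test curves are chosen independently and the Selmer/MW data of distinct curves are independent,
$$\Var\!\left(\sum_{E\in\wts_n(X)}\rank(E)\right) = \sum_{E\in\wtt(X)} \Var\!\left(\rank(E)\cdot\mathbf{1}_{E\in\wss_n^X}\right).$$
The law of total variance plus Proposition \ref{prop-equicorr} (which gives the common covariance $\Cov(Y_i,Y_j) = C_{1,1}^n(X)$ for $i\neq j$) reduces the per-curve variance, to leading order in $\theta_n$, to a constant multiple of $\lfloor n/2\rfloor\bigl(\rho_n(1-\rho_n) + (\lfloor n/2\rfloor-1) C_{1,1}^n\bigr)$. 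The same Abel summation as above produces the stated integral, while the second-order $\theta_n(1-\theta_n)$ piece and boundary terms absorb into the $O(X^{-7/6})$ remainder after dividing by $\pi_\wtt(X)^2 \sim \kappa^2 X^{5/3}$ and taking square roots.

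For the total average, I would sum $\operatorname{AvgRank}_{\wts_n}(X)$ over $n \geq 1$ and plug in $H_C$. Expanding $\theta_n(H) = s_n\sum_{m\geq 0}(-C_n H^{-e_n})^m$ as a geometric series (valid past a height threshold, with the initial range contributing the $\tau_n/X^{5/6}$ correction) and inserting $\rho_n(H) = D_n H^{-f_n}$, each remaining integral becomes $\int H^{-1/6-\alpha}\, dH = X^{5/6-\alpha}/(5/6-\alpha)$. Writing $5/6-\alpha = \tfrac{5}{6}(1-\tfrac{6}{5}\alpha)$ cancels the prefactor $\tfrac{5\kappa}{6}$, and dividing by $\pi_\wtt(X)\sim \kappa X^{5/6}$ collapses the exponent to $X^{-\alpha}$ with $\alpha = me_n$ (from $\theta_n$) and $\alpha = f_n + me_n$ (from $\theta_n \rho_n$), giving exactly the two summands inside the $m$-sum. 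For the limit: every term with $m\geq 1$ carries an $X^{-me_n}$, the $X^{-f_n}$ piece vanishes because $f_n>0$, and $\tau_n/X^{5/6}\to 0$, leaving $\sum_{n\geq 1} s_n(n\bmod 2) = \sum_{k\geq 0} s_{2k+1}$, which equals $1/2$ by the Poonen--Rains values of $s_n$. The variance integral grows like $X^{5/6}$ while $\pi_\wtt(X)^2$ grows like $X^{5/3}$, so the standard error decays like $X^{-5/12}\to 0$. The main obstacle is justifying the interchange of $\sum_n$ with the limit (and with the geometric-series expansion) uniformly in $n$; this should follow from the rapid decay of $s_n$ in the Poonen--Rains formulas together with the positivity $e_n, f_n>0$ of the exponents in $H_C$.
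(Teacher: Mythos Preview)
Your proposal is correct and follows essentially the same route as the paper: decompose $\rank(E)$ via the Bernoulli variables $Y_i$, take expectations using $H_A$ and $H_B$ (the paper cites Corollary~\ref{cor-hasseaverank} for this step), convert the height sum to an integral against the density $\tfrac{5\kappa}{6}H^{-1/6}$ coming from the growth condition on $\mathbf{T}$, and then expand $\theta_n$ as a geometric series under $H_C$ exactly as in Remark~\ref{rem-approxave}. The only minor difference is that for the variance you work with $\rank(E)\cdot\mathbf{1}_{E\in\wss_n^X}$ over all of $\wtt(X)$ and absorb the resulting $\theta_n(1-\theta_n)\mu_n^2$ term into the error, whereas the paper conditions on $E\in\wts_n(X)$ from the start and invokes Corollary~\ref{cor-hasseaverank} directly; both lead to the same displayed formula. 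For the identity $\sum_{k\geq 0}s_{2k+1}=\tfrac12$, the paper appeals to the Poonen--Rains generating function $\sum_n s_n z^n=\prod_i(1+2^{-i}z)/(1+2^{-i})$ evaluated at $z=\pm 1$, which is the concrete mechanism behind your ``by the Poonen--Rains values'' remark.
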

In particular, Theorem \ref{thm-averankintro} says that our assumptions imply the so-called ``$50\%-50\%$ conjecture'' (see Conjecture \ref{conj-5050}) and, moreover, it predicts not only the $1/2$ limit of the average rank, but also a rate of convergence to said limit. We have used our formulas to compute  an approximation of $\operatorname{AvgRank}_{\mathcal{E}}(X)$ by approximating $\sum_{n=1}^5 
\operatorname{AvgRank}_{\s_n}(X)$ in the range $[0,2.7\cdot 10^{10}]$ and plotted it in Figure \ref{fig-averank}. The error in our approximation of $\operatorname{AvgRank}_{\mathcal{E}}(2.7\cdot 10^{10})$ is $0.0523\%$ of the actual value, which is again in agreement with the error predicted by the model (see Remark \ref{rem-averank}). In Table \ref{tab-averank}, and under the assumption of Hypothesis C, we have computed approximate values of $\operatorname{AvgRank}_{\wtt}(X)$ by approximatin $\sum_{n=1}^5 
\operatorname{AvgRank}_{\wts_n}(X)$ using numerical integration of the formulas of Theorem \ref{thm-averankintro}.

Our Hypothesis A also implies a formula for the average $2$-Selmer rank of a test elliptic curve.

\begin{thm}[Also Prop. \ref{prop-aveselrank}]\label{thm-aveselrank}
Let $\wtt\in\mathbf{T}$ be arbitrary.  Let $\operatorname{AvgSelRank}(X)$ be defined by
$$\operatorname{AvgSelRank}(X) = \frac{1}{\pi_{\wtt}(X)}\sum_{E\in \wtt(X)} \selrank(E).$$
If we assume $H_A$ and we assume that $0\leq \theta_n(X)\leq s_n$ for all $n\geq 2$ and all $X>0$, then the expected value of the average Selmer rank is given by
$$\mathbb{E}(\operatorname{AvgSelRank}(X)) \myeq \frac{5/6}{X^{5/6}}\int_1^X \frac{\sum_{n\geq 1}n\cdot \theta_n(H)}{H^{1/6}}\, dH + O\left(X^{-1/3}\right).$$
Moreover, $\lim_{X\to\infty} \mathbb{E}(\operatorname{AvgSelRank}(X)) = \sum_{n\geq 1} n \cdot s_n =  1.26449978\ldots$. 
\end{thm}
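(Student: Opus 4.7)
The plan is to use the Bernoulli structure of $H_A$ to collapse the expected sum to a one-dimensional sum in $N$, convert that sum to an integral via partial summation against the growth law of $\pi_{\wtt}$, and finally pass to the limit by an L'H\^opital argument.

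First I would decompose $\selrank(E) = \sum_{n\geq 1} n \cdot \mathbf{1}_{E \in \wss_n^{\h(E)}}$ and invoke linearity of expectation together with Hypothesis $H_A$ (which gives $\mathbb{E}[\mathbf{1}_{E \in \wss_n^N}] = \theta_n(N)$ for each $E \in \wtt^N$) to obtain
$$\mathbb{E}\!\left[\sum_{E \in \wtt(X)} \selrank(E)\right] = \sum_{N=1}^X \#\wtt^N \cdot f(N), \quad \text{where } f(H) := \sum_{n \geq 1} n\theta_n(H).$$
The hypothesis $0 \leq \theta_n(H) \leq s_n$, combined with the convergence of $\sum n s_n = 1.26449978\ldots$, shows that $f$ is bounded uniformly in $H$ and justifies the interchange of the sums over $n$ and $N$.

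Next I would apply partial summation. Writing $\#\wtt^N = F(N) - F(N-1)$ with $F(N) = \kappa N^{5/6} + O(N^{1/2})$, and comparing the discrete sum with its Riemann--Stieltjes analogue, the main term comes from $d(\kappa H^{5/6}) = \tfrac{5\kappa}{6} H^{-1/6}\,dH$, yielding
$$\sum_{N=1}^X \#\wtt^N f(N) = \tfrac{5\kappa}{6}\int_1^X H^{-1/6} f(H)\,dH + O(X^{1/2}).$$
Dividing by $\pi_{\wtt}(X) = \kappa X^{5/6} + O(X^{1/2})$ and expanding $1/\pi_{\wtt}(X) = \kappa^{-1} X^{-5/6}(1 + O(X^{-1/3}))$ produces the stated formula with error $O(X^{-1/3})$.

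For the limit, I would apply L'H\^opital's rule: for any bounded $f$ with $\lim_{H\to\infty} f(H) = L$, the ratio $\frac{5/6}{X^{5/6}} \int_1^X H^{-1/6} f(H)\,dH$ tends to $L$, since differentiating numerator and denominator gives $\tfrac{6}{5} f(X) \to \tfrac{6}{5} L$ and then multiplying by $5/6$ yields $L$. Since $\theta_n(H) \to s_n$ (the Poonen--Rains limit values underlying the definition of $s_n$) and $n\theta_n(H) \leq n s_n$ with $\sum n s_n$ summable, dominated convergence gives $f(H) \to \sum_{n\geq 1} n s_n = 1.26449978\ldots$, establishing the limit. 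The main obstacle is justifying the $O(X^{1/2})$ partial-summation error when $f$ is only assumed bounded: controlling the Abel-summation remainder against the $O(N^{1/2})$ irregular component of $F(N)$ requires some regularity of the $\theta_n$ beyond mere boundedness, and the natural route is to assume (as holds automatically for the functional forms of Hypothesis C) that each $\theta_n$ is of locally bounded variation, so that the contributions telescope into an aggregate $O(X^{1/2})$ error as required.
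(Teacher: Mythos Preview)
Your proposal is correct and follows essentially the same approach as the paper: decompose by Selmer rank, convert the resulting height-sum against $\#\wtt^N$ to an integral via the growth law $\kappa X^{5/6}+O(X^{1/2})$, divide by $\pi_{\wtt}(X)$, and pass to the limit. The paper routes the sum-to-integral step through its ``on average'' machinery (Corollary~\ref{cor-countcurvesaverage} and Lemma~\ref{lem-inave}, applied via Proposition~\ref{prop-numberselcurves}) rather than Abel summation---as stated there this requires only $|\theta_n|\leq 1$, so it nominally sidesteps your bounded-variation concern---and for the limit it uses the direct splitting $f(H)=(f(H)-\alpha)+\alpha$ instead of L'H\^opital, but these are equivalent devices.
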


\begin{table}[h!]
\centering
\def\arraystretch{2}
\begin{tabular}{c|c||c|c}
$X$ & $\sum_{n=1}^5\operatorname{AvgRank}_{\wts_n}(X)$ & $X$ & $\sum_{n=1}^5\operatorname{AvgRank}_{\wts_n}(X)$ \\
\hline
$10^{10}$ & $0.905665$ & $10^{50}$ & $0.548880$\\
$10^{15}$ & $0.846828$ & $10^{75}$ & $0.512531$\\
$10^{20}$ & $0.766868$ & $10^{100}$ & $0.503256$\\
$10^{30}$ & $0.649901$ & $10^{150}$ & $0.500215$\\
$10^{40}$ & $0.585108$ & $10^{200}$ & $0.500006$\\
\end{tabular}
\caption{Approximate values of $\sum_{n=1}^5\operatorname{AvgRank}_{\wts_n}(X)$ obtained using numerical integration of the formulas of Theorem \ref{thm-averank}. The integration was done with SageMath, which reports an absolute error in the numerical integration less than $4\cdot 10^{-7}$ in all cases. By Theorem \ref{thm-averank}, the limit should be $s_1+s_3+s_5=0.49999965\ldots $.}
\label{tab-averank}
\end{table}

 Finally,  a question on {\it Selmer rank bias} arises in our work:
\begin{question}
Does the expected value of the random variables  $Y_i$ of Hypothesis B depend on $n$? In other words, does the probability that $s_E\in \Sel_2(E/\Q)$ is globally solvable depend on $n=\selrank(E(\Q))$?
\end{question}
The answer, surprisingly, seems to be that the probability does depend not only on the parity of $n$, but also on the value of $n$ itself (see Fig. \ref{fig-rhonx}). For instance, the data suggest that an element of $\Sel_2(E/\Q)$ is significantly more likely to be globally solvable for $n=5$ than for $n=3$. However, the probabilities for $n=2$ and $n=4$ are quite similar in the height interval $[0,2.7\cdot 10^{10}]$ (but they do not behave identically).

\begin{remark}
In this article we work with elliptic curves over $\Q$ and $2$-Selmer groups because the database we have to test our models (\cite{BHKSSW}) only contains $2$-Selmer information. However, the same probabilistic model could be derived for $p$-Selmer groups over a global field $K$.
\end{remark}

\begin{remark}
	The accuracy and the validity of the model to predict the distribution of ranks of elliptic curves is verified in two ways: (1) the probabilistic nature of the model allows for error formulas to be derived (see for instance Theorem \ref{thm-averankintro} or Corollary \ref{cor-predictrank}), and we compare the predictions of the model against the theoretical errors in several examples (see Tables \ref{tab-errors}, \ref{tab-errors2}, \ref{tab-predictrank2}), and (2) in order to give approximate values of the constants that appear in Hypothesis C, we have used the data of the BHKSSW database up to height $2.7\cdot 10^{10}$, but Balakrishnan et al. have also computed large height sample sets of elliptic curves, that we use to test our model at larger heights (see, for instance, Remark \ref{rem-sellargeheight} and  Table \ref{tab-errorslargeheight}). 
	
	In order to be able to improve the accuracy of our model, and extend our model to higher ranks ($r>5$), we would need a new massive amount of data in the form of a much larger database of elliptic curves which, at this time, is unavailable and far from the realm of our computational reach.
\end{remark}

\subsection{Structure of the article} 
The structure of the paper is as follows: in Section \ref{sec-notation} we settle the notation for the rest of the paper (including a summary Table \ref{tab-notation} of symbols) and discuss some basic probability notions. In Section \ref{sec-allcurves} we expand on a result of Brumer to estimate the number of elliptic curves up to a given height. In Section \ref{sec-Selmergroups} we review some basics about Selmer groups, and in Section \ref{sec-cramermodel} we begin the construction of the Cr\'amer-like random model by setting up the part of the probability space that models the Selmer rank of a test elliptic curve. In Section \ref{sec-selmer-ratio} we prove several consequences of the probabilistic model defined in Section \ref{sec-cramermodel}, and in particular count the number of test elliptic curves of each Selmer rank up to a certain height bound. In Section \ref{sec-cramermodel2} we continue the construction of the random model, now concentrating on the pieces of the model that will contribute to the Mordell--Weil and Tate--Shafarevich groups. In Section \ref{sec-hasse-ratio} we show more consequences of the model, and find formulas for the average rank of test elliptic curves up to a certain bound. Finally, in Sections \ref{sec-predictrank} and \ref{sec-predictave} we put everything together to give predictions on the number of elliptic curves of each rank, and the average rank.

\begin{ack}
The author would like to thank Jennifer Balakrishnan, Iddo Ben-Ari, Keith Conrad, Harris Daniels, Wei Ho, Jennifer Park, Ari Shnidman, Drew Sutherland, and John Voight for their helpful conversations, comments, and suggestions. The author would express his gratitude to the referees for a very meticulous reading of earlier drafts of the paper, and providing many detailed comments and suggestions to improve the article.
\end{ack}

\section{Notation and Probability}\label{sec-notation} 
\begin{table}[h!]
\centering
\def\arraystretch{1.5}
\begin{tabular}{lll}
 $\mathcal{E}$ & Set of elliptic curves over $\Q$ up to isomorphism & \S \ref{sec-allcurves}\\
 $\selrank(E(\Q))$ & $2$-Selmer rank, equal to $\dim_{\F_2}\Sel_2(E/\Q)-\dim_{\F_2}(E[2])$ & \S \ref{sec-intro}, \ref{sec-selmer-ratio}\\
 $\mathcal{S}_n$ & For $n\geq 0$, curves $E\in \mathcal{E}$ with $\selrank(E(\Q))=n$ & \S \ref{sec-selmer-ratio}\\
 $\mathcal{R}_r$ & For $r\geq 0$, curves $E\in \mathcal{E}$ with $\rank(E(\Q))=r$ & \S \ref{sec-predictrank}\\
 $\widetilde{\E}$, $\widetilde{\s}_n$, $\widetilde{\rr}_r$ & Test elliptic curves ($\widetilde{\E}$), of Selmer rank $n$ ($\widetilde{\s}_n$), of MW rank $r$ ($\widetilde{\rr}_r$) & \S \ref{sec-cramermodel}\\
$\h(E)$ & The naive height of an elliptic curve & \S \ref{sec-allcurves}\\
$\mathcal{C}(X)$ & For $X\geq 0$, curves in  $C=\mathcal{E}$, $\mathcal{R}_r$, or $\mathcal{S}_n$, with (naive) height $\leq X$ &\S \ref{sec-allcurves}, \ref{sec-selmer-ratio}, \ref{sec-predictrank} \\
$\mathcal{C}(I)$ & For an interval $I$, curves in $\mathcal{C}$ with height in $I$ &\S \ref{sec-allcurves}, \ref{sec-selmer-ratio}, \ref{sec-predictrank} \\
$\mathcal{C}^X$ &  For $X\geq 0$, curves in $\mathcal{C}$ with height exactly $X$& \S \ref{sec-allcurves}, \ref{sec-selmer-ratio}, \ref{sec-predictrank}\\ 
 $\mathbf{T}$ & Space of sequences of (finite) subsets of $\wee^X$, for each $X\geq 1$ & \S \ref{sec-intromodel}, \ref{sec-cramermodel}\\
 $\wtt$ & A sequence of (finite) subsets of $\wee^X$ for each $X\geq 1$, i.e., an element of $\mathbf T$ & \S \ref{sec-intromodel}, \ref{sec-cramermodel}\\
$\pi_\mathcal{C}(X)$ & For a set $\mathcal{C}\subseteq \mathcal{E}$, the size of $\mathcal{C}\cap \mathcal{E}(X)$, where $\mathcal{C}=\mathcal{E}$, $\mathcal{R}_r$, or $\mathcal{S}_n$ & \S  \ref{sec-allcurves}, \ref{sec-selmer-ratio}, \ref{sec-predictrank}\\
$\pi_\mathcal{C}(I)$ & For a set $\mathcal{C}\subseteq \mathcal{E}$ and an interval $I$, the size of $\mathcal{C}\cap \mathcal{E}(I)$ & \S \ref{sec-allcurves}, \ref{sec-selmer-ratio}, \ref{sec-hasse-ratio}\\
$\kappa$ & Constant equal to $2^{4/3}\cdot(\zeta(10)\cdot 3^{3/2})^{-1}= 0.484462004349\ldots$ & Thm. \ref{thm-brumer}\\
$s_n$ & $\lim_{X\to \infty} \pi_{\s_n}(X)/\pi_{\mathcal{E}}(X)$, given by a conjectural formula by \cite{poonen} & \S \ref{sec-selmer-ratio}\\
$B(m,p)$ & Binomial distribution with $m$ experiments and probability $p$ & \S \ref{sec-selmer-ratio}\\
$Y_{\Sel,n,X}(E/\Q)$ & Random variable with value $1$ if $\selrank(E(\Q))=n$, and $0$ otherwise & Hyp. \ref{hypa}\\
$\theta_n(X)$ & The function giving the expected value of $Y_{\Sel,n,X}(E/\Q)$& Hyp. \ref{hypa}\\
$\theta_n(X,N)$ & Moving ratio defined by $\pi_{\wts_n}((X,X+N])/\pi_{\wtt}((X,X+N])$ & Cor. \ref{cor-travelratio}\\
$Y_{\operatorname{Hasse},n,X}(s_E)$ & Random variable with value $1$  if $s_E\equiv 0\in \Sh(E/\Q)$, and $0$ otherwise & Hyp. \ref{hypb}\\
$\rho_n(X)$ & The function giving the expected value of $Y_{\operatorname{Hasse},n,X}(s_E)$ & Hyp. \ref{hypb}\\
$\rho_n(X,N)$ & Moving ratio approximating $\rho_n(X)$ & Def. \ref{defn-rhonx}\\
$C_{s,t}^n(X)$ & Covariance function of a certain products of random variables & Prop. \ref{prop-equicorr}\\
$\mathbb{E}_{s,t}^n(X)$ & Expected value of a certain product of random variables & Rem. \ref{rem-notation}\\
\end{tabular}
\caption{Notation defined and used throughout the paper.}
\label{tab-notation} 
\end{table} 

In Table \ref{tab-notation} we include a glossary of notation defined throughout the paper, together with a reference. We also recall here a few definitions of probability concepts for the convenience of the reader. We say that a random variable $Y$ follows a Bernoulli distribution $B(1,p)$, or $Y\sim B(1,p)$, if $Y$ takes the value $1$ with success probability of $p$ and the value $0$ with probability $1-p$. The binomial distribution $B(n,p)$ is the discrete probability distribution of the number of successes in a sequence of $n$ independent yes/no experiments, each of which yields success with probability $p$. The expected value and variance of a discrete random variable $Y$ that takes values $y_1,\ldots,y_k$ with probability $p_1,\ldots,p_k$ are defined respectively by
$$\mathbb{E}(Y)=\sum_{i=1}^k y_i\cdot p_i, \quad \Var(Y)=\mathbb{E}(Y^2) - (\mathbb{E}(Y))^2.$$
The covariance of two random variables $V,W$ is given by 
$$\Cov(V,W)=\mathbb{E}(VW)-\mathbb{E}(V)\cdot \mathbb{E}(W).$$
If $\Cov(V,W)=0$, then we say that $V$ and $W$ are uncorrelated random variables. If $V$ and $W$ are independent random variables, then $\mathbb{E}(VW)=\mathbb{E}(V)\mathbb{E}(W)$ and, in particular, $\Cov(V,W)=0$. Also, we note here that if $a$ and $b$ are constants, then
$$\Var(aV+bW) = a^2\Var(V)+b^2\Var(W)+2ab\Cov(V,W).$$
Finally, the standard error of the mean (SEM) of random variables $Y_1,\ldots,Y_m$ is an estimator for the accuracy of the approximation of $\frac{1}{m}\sum Y_i$ by $\frac{1}{m}\sum \mathbb{E}(Y_i)$, and it is defined as the square root of the variance of the mean of the variables. In other words, the standard error is given by 
$$ \sqrt{\Var\left(\frac{1}{m}\sum_{i=1}^m Y_i\right)}.$$
If $Y_1,\ldots,Y_m$ are $m$ independent random variables following the same distribution with mean $\mu$ and standard deviation $\sigma$, then $\operatorname{SEM}(Y_1,\ldots,Y_m)=(\frac{1}{m^2}\sum \Var(Y_i))^{1/2}=(\Var(Y_1)/m)^{1/2}=\sigma/\sqrt{m}$. 

\section{The number of elliptic curves with (naive) height $\leq X$}\label{sec-allcurves}

Let $E/\Q$ be an elliptic curve. We shall write each elliptic curve in a short Weierstrass model of the form $y^2=x^3+Ax+B$ with $A,B\in\Z$ and $0\neq 4A^3+27B^2$ such that $\Delta_E$ is minimal in absolute value (minimal among all  short Weierstrass models isomorphic to $E$ over $\Q$). In other words, we will be working with the set of elliptic curves 
$$\mathcal{E} = \{E_{A,B} : y^2=x^3+Ax+B\ |\ A,B\in\Z, 4A^3+27B^2\neq 0, \text{ and if } d^4|A,\ d^6|B, \text{ then } d=\pm 1\}.$$ 
 Then, the (naive) height of $E=E_{A,B}\in\mathcal{E}$ is defined by
$$\h(E_{A,B}) = \max\{ 4|A|^3,27B^2\},$$
as used in \cite{BHKSSW},  \cite{brumer}, and \cite{ppvm}. The BHKSSW database (\cite{BHKSSW}) contains data for all $\num[group-separator={,}]{238764310}$ elliptic curves up to height $\num[group-separator={,}]{26998673868}\approx 2.7\cdot 10^{10}$. While working on this project, we have gathered data for the curves $y^2=x^3+Ax$, for all fourth-power-free integers $A\in [1,10^6]$, that is, about a million curves with $j=1728$, up to height $4\cdot 10^{18}$.

For each positive real number $X$, we define $\E(X) = \{ E\in\mathcal{E}: \h(E)\leq X\}$, and $\pi_{\mathcal{E}}(X)=\#\E(X)$. Similarly, if $0\leq X_1\leq X_2$, we shall write $\E([X_1,X_2])$ for the set $\{E\in \mathcal{E} :X_1\leq \h(E)\leq X_2 \}$ and $\pi_{\mathcal{E}}([X_1,X_2])=\#\E([X_1,X_2])$ for its size (in particular, $\mathcal{E}^X=\E([X,X])$ denotes the elliptic curves of height {\it exactly} $X$, a set that can be empty depending on the value of $X$). We cite a result of Brumer (\cite{brumer}) that estimates the value of $\pi_{\mathcal{E}}(X)$ to our choice of height function.
\begin{theorem}[{\cite[Lemma 4.3]{brumer}}]\label{thm-brumer}  The number of elliptic curves of height up to $X$ satisfies  $\displaystyle \pi_{\mathcal{E}}(X) = \kappa X^{5/6} +  O(X^{1/2})$  
where the constant $\kappa = 2^{4/3}\cdot(\zeta(10)\cdot 3^{3/2})^{-1}= 0.484462004349\ldots$. 
\end{theorem}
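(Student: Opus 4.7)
The plan is to reduce the statement to a lattice-point count in the region $\{(A,B)\in\Z^2 : \max(4|A|^3,27B^2)\leq X\}$, then apply Möbius inversion to enforce the minimality condition $(d^4\mid A,\ d^6\mid B \Rightarrow d=\pm 1)$, and finally absorb the singular locus $4A^3+27B^2=0$ into the error term.

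First I would count $T(X):=\#\{(A,B)\in\Z^2 : 4|A|^3\leq X,\ 27B^2\leq X\}$. Since the region factors as a product of intervals,
\[
T(X)=\bigl(2\lfloor (X/4)^{1/3}\rfloor +1\bigr)\bigl(2\lfloor (X/27)^{1/2}\rfloor +1\bigr)=\tfrac{2^{4/3}}{3^{3/2}}X^{5/6}+O(X^{1/2}),
\]
where the dominant error term comes from the $O(1)$ slack in the $A$-count multiplied by the size of the $B$-range $O(X^{1/2})$.

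Next I would remove non-minimal pairs. Let $U(X)$ denote the number of pairs $(A,B)$ with height $\leq X$ satisfying the minimality condition. Any pair with $d^4\mid A$ and $d^6\mid B$ can be written $(A,B)=(d^4 A',d^6 B')$ with $\h(E_{A',B'})\leq X/d^{12}$, and each pair has a unique primitive representative obtained by extracting the largest such $d$. This gives the identity $T(X)=\sum_{d\geq 1}U(X/d^{12})$, which Möbius-inverts to
\[
U(X)=\sum_{d:\,d^{12}\leq X}\mu(d)\,T(X/d^{12}).
\]
Substituting the asymptotic for $T$ and using $\sum_{d\geq 1}\mu(d)/d^{10}=1/\zeta(10)$ yields a main term $\frac{2^{4/3}}{3^{3/2}\zeta(10)}X^{5/6}=\kappa X^{5/6}$. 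The error splits into two pieces: the sum of the $O((X/d^{12})^{1/2})$ errors, bounded by $O(X^{1/2})\sum_d 1/d^6$, which converges; and a tail error of size $O(X^{5/6})\cdot \sum_{d>X^{1/12}}1/d^{10}=O(X^{1/12})$ from truncating the Möbius sum. Both are $O(X^{1/2})$.

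Finally I would subtract the singular pairs. The curve $4A^3+27B^2=0$ is parametrised by $(A,B)=(-3t^2,\pm 2t^3)$ with $t\in\Z$, so it contributes only $O(X^{1/6})$ points of height $\leq X$, which is comfortably absorbed into the error. The one subtle step is the error bookkeeping in the Möbius sum: a careless bound would produce an extra factor of $\log X$ or a weaker exponent, so one must exploit the rapid decay $1/d^6$ of the $T$-error to get the stated $O(X^{1/2})$ bound uniformly; this is the main (mild) obstacle.
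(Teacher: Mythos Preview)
Your proposal is correct and follows essentially the same approach as the paper's proof (which is an adaptation of Brumer's Lemma~4.3): a lattice-point count in the box $|A|\leq (X/4)^{1/3}$, $|B|\leq (X/27)^{1/2}$, M\"obius inversion over $d$ via the twist action $(A,B)\mapsto(d^4A,d^6B)$ to impose minimality, and removal of the singular locus via the parametrisation $(A,B)=(-3c^2,\pm 2c^3)$ contributing $O(X^{1/6})$. Your error bookkeeping (the $O(X^{1/2})\sum_d d^{-6}$ from the box error and the $O(X^{1/12})$ tail from truncating $\sum\mu(d)/d^{10}$) matches the paper's, just stated more tersely.
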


\begin{remark} Using the BHKSSW database, we have calculated the values of $\pi_{\mathcal{E}}(X)$ up to $2.7\cdot 10^{10}$ in $0.25\cdot 10^9$ intervals. We have  found (using SageMath, \cite{sage}) the best-fit model of the form $C\cdot X^{5/6}$ for these data points, and found that the best constant is $C= 0.48447036\ldots$ in agreement with Brumer's constant ($C$ and $\kappa$ differ by $8.35\cdot 10^{-6}$).

\begin{center}
\begin{figure}[h!]
\includegraphics[width=6.6in]{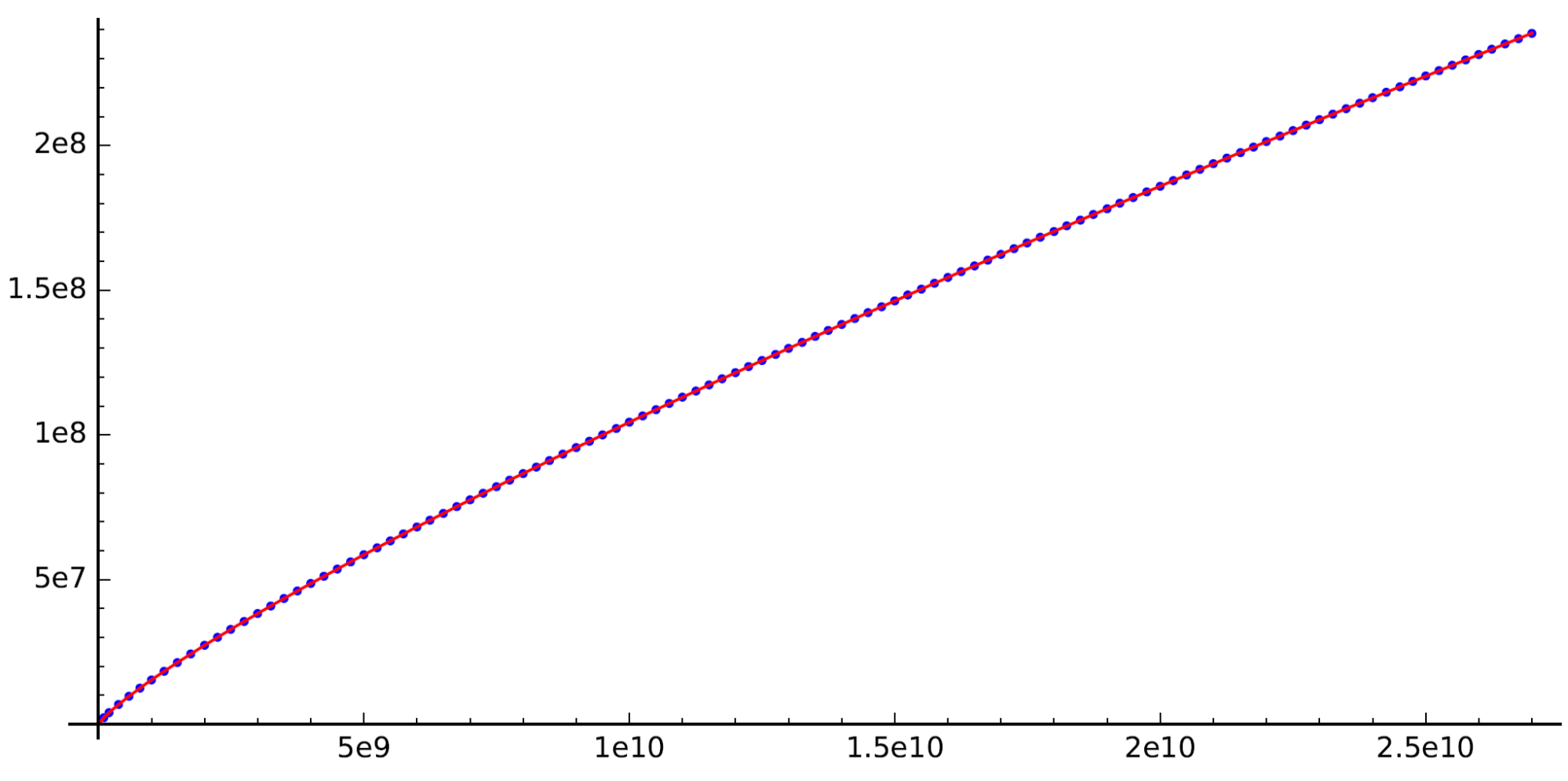}
\caption{Values of $\pi_{\mathcal{E}}(X)$ from the BHKSSW database (blue dots), and the function $0.48447036\cdot X^{5/6}$ (in red).}
\end{figure}
\end{center}
\end{remark}

\begin{remark}\label{rem-numbercurves}
According to Theorem \ref{thm-brumer}, the number of curves in the height interval $(X,X+N]$ is, approximately, 
\begin{eqnarray*}
\pi_{\mathcal{E}}((X,X+N]) &=& \pi_{\mathcal{E}}(X+N)-\pi_{\mathcal{E}}(X)\\
& \approx & \kappa\cdot ((X+N)^{5/6} - X^{5/6}) \\
 &=& \frac{5\kappa}{6}\int_{X}^{X+N} \frac{1}{H^{1/6}}\, dH \approx \frac{5\kappa}{6}\cdot \frac{N}{X^{1/6}},
\end{eqnarray*}
where $5\kappa/6= 0.403718336957\ldots$, and the last approximation is valid for large $X$ such that $X\gg N\geq 0$. However, the error in this approximation is still of the order $O(X^{1/2})$, so the error can be quite large, and it can oscillate from positive to negative. For instance, 
\begin{eqnarray*}
\pi([2\cdot 10^{10},2\cdot 10^{10}+0.25\cdot 10^9]) &=& \num[group-separator={,}]{1955593} \ \text{ while }\ \num[group-separator={,}]{1937225.394}\ldots = \frac{5\kappa}{6}\cdot \frac{0.25\cdot 10^9+1}{(2\cdot 10^{10})^{1/6}},\\
\pi([2.5\cdot 10^{10},2.5\cdot 10^{10}+0.25\cdot 10^9]) &=& \num[group-separator={,}]{1852352} \ \text{ while }\  \num[group-separator={,}]{1866502.107}\ldots = \frac{5\kappa}{6}\cdot \frac{0.25\cdot 10^9+1}{(2.5\cdot 10^{10})^{1/6}}.
\end{eqnarray*}
Nonetheless, we shall prove below (Corollary \ref{cor-countcurvesaverage}) that the approximation of $\pi_{\mathcal{E}}((X,X+N])$ by $\frac{5\kappa}{6}\cdot \frac{N}{X^{1/6}}$ works {\it on average} with error going to zero as $X$ goes to infinity, but before we do so, we will write a formal definition of what we mean by ``on average'' (see Definition \ref{defn-inave}). We also point out here that if we want $\pi_{\mathcal{E}}((X,X+N])$ to be approximately constant as $X\to \infty$, then we need $N=N(X) \asymp C\cdot X^{1/6}$. For instance, if we want $\pi_{\mathcal{E}}((X,X+N(X)])\approx 10^t$, then we should have $N=N(X)=(6\cdot 10^t/5\kappa)\cdot X^{1/6}$, where $6/5\kappa = 2.476974436029\ldots$.
\end{remark}

\begin{defn}\label{defn-inave}
	Let $f,g\colon \N \to \R$ and $h\colon \R \to \R$ be functions. We say that $f(X)\myeq g(X) +O(h(X))$ if the following condition is satisfied:
	$$\frac{1}{X} \sum_{k=1}^{\lfloor X \rfloor} (f(X)-g(X)) = O(h(X))$$
	where $O(h(X))$ is the standard big-O notation.
\end{defn}

\begin{prop}\label{cor-countcurvesaverage0}
	Let $f(X)= g(X) + O(X^{a})$ with $0<a<1$ and $g$ differentiable, and with bounded derivative in $[1,\infty)$. 
	Let $N=N(X)$ be a function of $X$ (possibly a constant) such that $N=O(X^a)$. Then, 
	$$f(X+N)-f(X)\myeq \int_X^{X+N} g'(H)\, dH + O\left(\frac{N}{X^{1-a}}\right) \myeq N\cdot g'(X+N) + O\left(\frac{N}{X^{1-a}}\right).$$ 
	In particular, $f(X)-f(X-1) \myeq g'(X) + O(X^{a-1})$.
\end{prop}
\begin{proof}
	Let $N=N(X)$ be a function of $X$ (possibly a constant function) such that $N^{1/a}=O(X)$. Then
	\begin{eqnarray*}
		& & \frac{1}{X}\cdot \sum_{i=1}^{\lfloor X\rfloor} \left(f(i+N)-f(i) -  \int_{i}^{i+N} g'(H)\, dH\right)\\
		&= & \frac{1}{X}\cdot \sum_{i=1}^{\lfloor X\rfloor} \left(f(i+N)-\int_{1}^{i+N} g'(H)\, dH+\int_{1}^{i} g'(H)\, dH - f(i) \right)\\
		&= & \frac{1}{X}\cdot \sum_{j=1}^{N} \left(f(\lfloor X \rfloor+j)-\int_{1}^{\lfloor X \rfloor+j} g'(H)\, dH+\int_{1}^{j} g'(H)\, dH - f(j) \right)\\
		&= & \frac{1}{X}\cdot \sum_{j=1}^{N} \left(O((\lfloor X \rfloor +j)^{a}) + g(1) - g(1) - O(j^{a})  \right)\\
		&= & \frac{1}{X}\cdot \left(O(N\cdot X^{a}) + O(N\cdot N^{a})\right) = O\left(\frac{N}{X^{1-a}}\right),
	\end{eqnarray*}
	since $f(Y)=g(Y)+O(Y^{a})$, and $N^{1/a}=O(X)$. Now it follows that
	\begin{eqnarray*}
		& & \frac{1}{X}\cdot \sum_{i=1}^{\lfloor X\rfloor} \left(f(i+N)-f(i) - Ng'(i+N)\right) \\
		&= & \frac{1}{X}\cdot \sum_{i=1}^{\lfloor X\rfloor} \left(f(i+N)-f(i) - \int_{i}^{i+N} g'(H)\, dH+ \int_{i}^{i+N} g'(H)\, dH- Ng'(i+N)\right),\\
		&=& O\left(\frac{N}{X^{1-a}}\right) + \frac{1}{X}\cdot \sum_{i=1}^{\lfloor X\rfloor} \left( \int_{i}^{i+N} g'(H)\, dH- Ng'(i+N)\right),
	\end{eqnarray*}
\begin{eqnarray*}
		&\leq & O\left(\frac{N}{X^{1-a}}\right) + \frac{N}{X}\cdot \sum_{i=1}^{\lfloor X\rfloor} \left( g'(i) - g'(i+N)\right),\\
		&=& O\left(\frac{N}{X^{1-a}}\right) + \frac{N}{X}\cdot \sum_{j=1}^{N} \left(g'(j) - g'(\lfloor X\rfloor +j)\right) = O\left(\frac{N}{X^{1-a}}\right) + O\left(\frac{N^2}{X}\right)=O\left(\frac{N}{X^{1-a}}\right),\\
	\end{eqnarray*}
	where we have used the fact that $N=O(X^a)$, and if $h(x)$ is decreasing, then $$0\leq \int_a^b h(x) dx - (b-a)h(b)\leq (b-a)(h(a)-h(b)),$$ with  $b-a=(i+N)-i=N$ and $h(x)=g'(x)$. 
\end{proof}

When we apply Prop \ref{cor-countcurvesaverage0} to a function that grows like $\kappa X^{5/6}$, we obtain the following corollary.

\begin{cor}\label{cor-countcurvesaverage}
Let $N=N(X)$ be a function of $X$ (possibly a constant) such that $N^2=O(X)$. Let $f(X)$ be a function such that $f(X)=\kappa X^{5/6}+O(X^{1/2})$. Then, 
$$f(X+N)-f(X)\myeq \int_X^{X+N} \frac{5\kappa/6}{H^{1/6}}\, dH + O\left(\frac{N}{X^{1/2}}\right) \myeq \frac{5\kappa}{6}\cdot \frac{N }{(X+N)^{1/6}} + O\left(\frac{N}{X^{1/2}}\right).$$ 
In particular, $f(X)-f(X-1) \myeq (5\kappa/6)X^{-1/6} + O(X^{-1/2})$.
\end{cor}

Next we consider an in-average result for the growth of a function $f(X)$ times a weight function $\theta(X)$.

\begin{prop}\label{prop-ftimestheta}
	Suppose $f(X)= g(X)+O(X^{a})$ and $\theta(X)=s+O(X^{-e})$ for some constants $s\in [0,1)$ and $a,e>0$, where $g(X)$ is a positive, increasing, and differentiable function on $[0,\infty)$, and its derivative is bounded in $(1,\infty)$. Also assume that $\theta(X)\in (0,1)$ for all $X\geq 1$, and it is monotonic, such that $|\theta'(X)|$ is decreasing for all $X\geq 0$. Then,
	\begin{align*}(f(X)-f(X-1))\cdot \theta(X) & \myeq \int_{X-1}^X g'(t)\theta(t)\, dt + O\left(\frac{\theta^{\text{sup}}}{X^{1-a}}+\frac{1}{X}\left(\int_{1}^X g'(t)\cdot |\theta'(t)| \, dt \right)\right).
	\end{align*} 
\end{prop}
\begin{proof}Suppose $f(X)= g(X)+O(X^{a})$. By Prop.  \ref{cor-countcurvesaverage0}, we have that $f(x)-f(x-1)\myeq \int_{x-1}^x g(t)\, dt + O(x^{a-1})$. Thus,
	there are $x_1\geq 0$ and $C_1>0$ such that 
	$$\left|\frac{1}{X}\sum_{j=2}^{\lfloor X \rfloor} \left(f(j)-f(j-1) - \int_{j-1}^j g'(t)dt\right)\right|\leq C_1X^{a-1}$$ for $X\geq x_1$. Let   $\theta^{\text{sup}}$ be the supremum of $\theta(X)$ on $[0,\infty)$. Then, for values $X\geq x_1$, we have
	\begin{align}
		\nonumber & \left|\frac{1}{X}\sum_{j=2}^{\lfloor X \rfloor} \left((f(j)-f(j-1))\theta(j) - \int_{j-1}^j g'(t)\theta(t)\, dt  \right)\right| = \diamondsuit\\
		\nonumber &= \left| \frac{1}{X}\sum_{j=2}^{\lfloor X \rfloor}\left(\theta(j)\left(f(j)-f(j-1) - \int_{j-1}^j g'(t)dt\right) - \int_{j-1}^j g'(t)(\theta(t)-\theta(j))\, dt \right)\right|\\
		\nonumber &\leq \frac{C_1\theta^{\text{sup}}}{X^{1-a}}+\frac{1}{X}\sum_{j=2}^{\lfloor X \rfloor}\left(\int_{j-1}^j g'(t)\cdot |\theta(t)-\theta(j)|\, dt \right) \\
		\label{eq-decrease}& \leq  \frac{C_1\theta^{\text{sup}}}{X^{1-a}}+\frac{1}{X}\left(\int_{1}^X g'(t)\cdot |\theta'(t)| \, dt \right),
		\end{align}
		where we have used the fact that $|\theta(t)-\theta(j)| = |\theta'(t^\ast)|\cdot |t-j|$ for some $t^\ast \in (t,j)$, by the mean value theorem. Thus, $|\theta(t)-\theta(j)|\leq |\theta'(t^\ast)|\leq |\theta'(t)|$, because $j-1\leq t\leq j$, and $|\theta'(t)|$ is decreasing as $t$ increases.
\end{proof}

\begin{cor}\label{cor-countcurvesaverage1}
 Let $f(X)$ be a function such that $f(X)=\kappa X^{5/6}+O(X^{1/2})$, and let $\theta(X)=s+O(X^{-e})$ for some constants $e>0$, and $s\in [0,1)$, such that $\theta(X)\in (0,1)$ for all $X\geq 1$, and $|\theta'(X)|$ decreases for all $X\geq 1$, and $|\theta'(X)|\leq (\theta^{\text{sup}}\cdot C)/X^{1+e}$, for all $X\geq 0$, for some constant $C$. Then, 
\begin{align*} (f(X)-f(X-1))\cdot \theta(X) & \myeq \frac{5\kappa}{6}\int_{X-1}^X \frac{\theta(t)}{t^{1/6}}\, dt + O\left(\frac{1}{X^{1/2}}\right)\\
& \myeq  \frac{5\kappa}{6}\frac{\theta(X)}{X^{1/6}} + O\left(\frac{1}{X^{1/2}}\right),\end{align*}
where the implied error in the approximation is bounded by $C_1\cdot \theta^\text{sup}/X^{1/2}$, for some constant $C_1$ that depends on $f$, and $\theta^\text{sup}$ is the supremum of $\theta(X)$ in $(0,\infty)$.
\end{cor}
\begin{proof}
	Now suppose that $f(X)$ is a function such that $f(X)=\kappa X^{5/6}+O(X^{1/2})$, so that $g(X)=\kappa X^{5/6}$ and $a=1/2$. Further, assume that $|\theta'(X)|\leq (\theta^{\text{sup}}\cdot C)/X^{1+e}$, for all $X\geq 0$, and a constant $C$. Thus, it follows from Eq. (\ref{eq-decrease}) that 
	\begin{align*}
	\left|\frac{1}{X} \left( \sum_{j=2}^{\lfloor X \rfloor}(f(j)-f(j-1))\cdot \theta(X) - \frac{5\kappa}{6}\int_{j-1}^j \frac{\theta(t)}{t^{1/6}}\, dt \right)\right| 
	& \leq  \frac{C_1\theta^{\text{sup}}}{X^{1/2}}+\frac{(5/6)\kappa \cdot \theta^{\text{sup}}\cdot C}{X}\left(\int_{1}^X t^{-7/6-e} \, dt \right)\\
	& \leq \frac{C_1\theta^{\text{sup}}}{X^{1/2}}+\frac{(5/6)\kappa \cdot \theta^{\text{sup}}\cdot C}{(1/6+e)X}\cdot \left(1-\frac{1}{X^{1/6+e}}\right)\\ &\leq \frac{C_2   \theta^{\text{sup}}}{X^{1/2}}= O\left(\frac{1}{X^{1/2}}\right),
	\end{align*}
	where the inequalities are valid for all $X\geq x_1$, with $x_1$ and $C_2=\max\{C_1, 5\kappa C \}$, and $C_1$ as defined in the proof of Prop \ref{prop-ftimestheta}.
	
	The second part follows similarly, when we note that
	\begin{align*} \left|\int_{X-1}^X\frac{\theta(t)}{t^{1/6}}\, dt - \frac{\theta(X)}{X^{1/6}}\right| & \leq \left|\frac{\theta(X)}{X^{1/6}} - \frac{\theta(X-1)}{(X-1)^{1/6}}\right|\leq \frac{|\theta(X)-\theta(X-1)|}{X^{1/6}}\\
	&\leq \frac{|\theta'(X-1)|}{X^{1/6}}\leq \frac{\theta^{\text{sup}}\cdot C}{X^{1/6}(X-1)^{1+e}}\leq \frac{2C\theta^{\text{sup}}}{X^{7/6}}=O(X^{-(7/6)}),\end{align*}
	for $X\geq 3$, so the difference can be subsumed into the error term  $O(X^{-1/2})$. 
\end{proof}

\begin{lemma}\label{lem-inaveweights}
	Suppose $f(X)\myeq g(X) + O(X^a)$ for some $a$, and let $\theta(X)$ be a function such that $0<\theta(X)<1$ for all $X>0$, and $\theta(X)$ is decreasing, with $\theta(X)=O(X^b)$ for some $b<0$. Then, $f(X)\theta(X) \myeq g(X)\theta(X) +  O(X^{a+b})$ if $a+b\neq -1$, and $O(\log X)$ otherwise.
\end{lemma}
\begin{proof}
	Suppose $\varphi_k$, for $k\geq 1$, are real numbers such that $|\frac{1}{X}\sum_{k=1}^{\lfloor X \rfloor} \varphi_k | \leq C_1\cdot X^a$ for large enough $X$, and suppose $\theta_k$ is a sequence of numbers in $[0,1]$ such that $\theta_k \leq  C_2 \cdot k^b$, for large enough $k$, and $\theta_k$ is decreasing. Put $C_3=C_1\cdot C_2$. Then, we can use Abel's lemma (summation by parts) to obtain
	\begin{eqnarray*}
		\left|\frac{1}{X}\sum_{k=1}^{\lfloor X \rfloor} \theta_k\varphi_k \right| &\leq & \left|\frac{\theta_{\lfloor X \rfloor}}{X}\sum_{k=1}^{\lfloor X \rfloor} \varphi_k \right| + \frac{1}{X}\sum_{k=1}^{\lfloor X \rfloor - 1} \left|\theta_{k+1}-\theta_k\right|\cdot k\cdot \left|\frac{1}{k}\sum_{j=1}^{k} \varphi_j  \right|\\
		&\leq& C_3 \cdot X^{a+b} + \frac{1}{X}\sum_{k=1}^{\lfloor X \rfloor - 1} C_3\cdot |b|\cdot k^{a+b}\\
		&\leq& C_3 \cdot X^{a+b} + \begin{cases} \frac{C_3\cdot |b|}{|a+b+1|}\cdot X^{a+b}, & \text{ if } a+b\neq -1,\\
			\frac{C_3\cdot |b|}{|a+b+1|}\cdot \log X, &\text{ if } a+b=-1
		 \end{cases}  = 	\begin{cases} O(X^{a+b}), & \text{ if } a+b\neq -1,\\
		 O(\log X), &\text{ if } a+b=-1.
	 \end{cases}
	\end{eqnarray*}
	where we have used $\sum_{k=1}^{n-1} k^c =\leq \int_{1}^n  t^{c}\, dt$, and $|\theta_{k+1}-\theta_k|\leq |(C_2\cdot k^b)'|$ because $\theta_k$ is decreasing and $\theta_k\leq C_2k^b$. 
\end{proof}

\begin{lemma}\label{lem-sumofthetas}
	Let $\theta(X)$ be a function such that $0<\theta(X)<1$ for all $X>0$, such that   $\theta(X)=s+O(X^b)$ with $b<0$ and $0\leq s<1$. Let $N=N(X)$ be a function of $X$ (possibly constant), and let $\alpha_1,\ldots,\alpha_N$ be positive real numbers such that $\sum_{i=1}^N \alpha_i = 1$. Then,
	$$\sum_{i=1}^N \alpha_i \cdot \theta(X+i)=\theta(X) + O(X^{b}), \text{ and } \sum_{i=1}^N \alpha_i \cdot \theta(X+i)(1-\theta(X+i))=\theta(X)(1-\theta(X)) + O(X^{b}).$$
\end{lemma}
\begin{proof} Let $\alpha_i$ and $\theta(X)$ be as in the statement. Then,
	\begin{eqnarray*}
	\left(\sum_{i=1}^N \alpha_i\cdot  \theta(X+i)\right) - \theta(X) &=& \left(\sum_{i=1}^N \alpha_i\cdot  \theta(X+i)\right) - \sum_{i=1}^N\alpha_i\cdot \theta(X) = \sum_{i=1}^N \alpha_i\cdot \left( \theta(X+i)-\theta(X)\right)\\
	&=& \sum_{i=1}^N \alpha_i \cdot O(X^b) = O(X^b),
\end{eqnarray*}
where we have used the fact that $\theta = s + O(X^b)$ with $b<0$, so 
$$|\theta(X+y)-\theta(X)|\leq |\theta(X+y)-s| + |s-\theta(X)|\leq  C\cdot(X+y)^b+C\cdot X^b\leq 2C\cdot X^b$$ for all $y\geq 0$, for sufficiently large $X$, and some constant $C>0$. Similarly,
\begin{eqnarray*}
	 \left(\sum_{i=1}^N \alpha_i \cdot \theta(X+i)(1-\theta(X+i)) \right) - \theta(X)(1-\theta(X)) &=& \\
	\sum_{i=1}^N \alpha_i \cdot \left(\theta(X+i)(1-\theta(X+i))  - \theta(X)(1-\theta(X)) \right) &=&\\
	\sum_{i=1}^N \alpha_i\cdot \left( \theta(X+i)-\theta(X)\right) - \sum_{i=1}^N \alpha_i\cdot \left( \theta(X+i)^2-\theta(X)^2\right) &=& O(X^b) + O(X^{\mathbf{b}}) = O(X^b),
\end{eqnarray*}
where we have used the first part of the proof, and also the fact that $$(s+O(X^b))^2 = s^2 + 2sO(X^b) + O(X^{2b})= s^2+O(X^{\mathbf{b}}),$$ for $b<0$, where $\mathbf{b}=b$ if $s>0$, and $\mathbf{b}=2b$ if $s=0$.  
\end{proof}

\section{Selmer groups}\label{sec-Selmergroups}

Let $\Sel_2(E/\Q)$ be the $2$-Selmer group of $E/\Q$ and let $\Sh(E/\Q)[2]$ be the $2$-torsion subgroup of the Tate-Shafarevich group of $E/\Q$ (as defined in \cite{silverman}, Chapter X) which fit in a short exact sequence
\begin{eqnarray}\label{eq-shortseq}
\xymatrix{ 0 \ar[r] & E(\Q)/2E(\Q) \ar[r]^{\delta_E} & \Sel_2(E/\Q) \ar[r]&\Sh(E/\Q)[2]\ar[r] & 0 }
\end{eqnarray}
As in \cite{heath1}, we shall refer to the quantity
$$\operatorname{rank}_{\Z/2\Z} \left(\Sel_2(E/\Q)/(E(\Q)_\text{tors}/2E(\Q)_\text{tors}) \right)= \operatorname{rank}_{\Z/2\Z}(\Sel_2(E/\Q))-\operatorname{rank}_{\Z/2\Z}(E(\Q)[2])$$
as the $2$-Selmer rank (or, simply, Selmer rank) of $E/\Q$, and will denote it by $\selrank(E(\Q))$. We note here that the exact sequence above implies that $\rank(E(\Q))\leq \selrank(E(\Q))$ for all elliptic curves. We define
$$\s_n = \{ E\in\mathcal{E} : \selrank(E(\Q))=n\},$$
and we will denote by $\s_n(X)$ those curves in $\s_n$ of height up to $X$, and $\pi_{\s_n}(X)=\# \s_n(X)$. Imitating the notation in the previous section, we shall also write $\s_n([X_1,X_2])$ and $\pi_{\s_n}([X_1,X_2])$ when referring to curves in $\s_n$ in the height interval $[X_1,X_2]$, and will abbreviate $\s_n^X=\pi_{\s_n}([X,X])$. Poonen and Rains (\cite{poonen}) have conjectured a value for the limit $s_n=\lim_{X\to \infty} \pi_{\s_n}(X)/\pi_{\mathcal{E}}(X)$, namely
$$s_n=\operatorname{Prob}(\selrank(E(\Q))=n) = \left(\prod_{j\geq 0} \frac{1}{1+2^{-j}} \right)\cdot \left(\prod_{k=1}^n \frac{2}{2^k - 1} \right),$$
and, in fact, they conjecture a similar distribution for $p$-Selmer groups of rank $n$, and any prime $p$. This probability has been shown to hold for quadratic twists of certain elliptic curves (see \cite{heath1}, \cite{heath2}, \cite{swin}, and \cite{kane}). The value of the constant $s_0=\prod_{j\geq 0}(1+2^{-j})^{-1}$ is approximately $0.20971122$, and we have included approximations of $s_n$ for $n=1,\ldots, 6$ for future reference in Table 1.
\begin{table}[h!]
\centering
\def\arraystretch{1.5}
\begin{tabular}{c|c|c|c|c|c|c}
\hline
$s_0$ & $s_1$ &  $s_2$ & $s_3$ & $s_4$ & $s_5$ & $s_6$ \\
\hline
$0.20971122$ & $0.41942244$ & $0.27961496$ & $0.07988998$ & $0.01065199$ & $0.00068722$ & $0.00002181$\\
\hline 
\end{tabular}
\caption{Values of $s_n=\operatorname{Prob}(\selrank(E(\Q))=n)$}\label{tab1}
\end{table} 
\begin{remark}
	Note that $s_n\to 0$ as $n\to \infty$. In fact, $\sum_{n\geq 0} s_n =1$ (see also Lemma \ref{lem-seriesconverge}).
\end{remark}

For our purposes, we are interested in the behavior of the function  $\s_n(X)$, but we are even more interested in the conditional probability
$$\operatorname{pSel}_n(X)=\operatorname{Prob}(\selrank(E(\Q))=n | \h(E)=X)=\# \s_n^X/\#\mathcal{E}^X,$$
when $\#\mathcal{E}^X \neq 0$. In other words, we would like to know the probability that a curve $E$ of height $X$ has Selmer rank $n$. We will model this probability with a random model, described in the following section.

\section{Random model, following Cram\'er, part 1} \label{sec-cramermodel}

In this section we define a space $\widetilde{\E}$ of ``test elliptic curves'' and ``test Selmer elements'', which will become a probability space when taking into account our probabilistic hypotheses $H_A$ and $H_B$ (and their refinement $H_C$). 

\begin{defn}
	\label{defn-testellipticcurve} A test elliptic curve is a triple $E=(X,n,\Sel_2)$ consisting of:
	\begin{itemize}
		\item a positive integer $X\geq 1$, the height of $E$, also denoted $X=\h(E)$,
		\item a non-negative integer $n$, the Selmer rank of $E$, also denoted $n=\selrank(E)$, and 
		\item a vector $\Sel_2(E)=(s_{E,1},s_{E,2},\ldots,s_{E,\lfloor \frac{n}{2} \rfloor} )$ of $\lfloor \frac{n}{2} \rfloor =(n-(n\bmod 2))/2$ test Selmer elements. Each test Selmer element is either a {\rm MW} element, or a $\Sh$ element.
	\end{itemize}
	The set of all test elliptic curves will be denoted by $\widetilde{\E}$, and the subset of those test curves with height $X$ will be denoted by $\widetilde{\E}^X$. 
\end{defn}

It follows from the definition of the space $\wee$ of test elliptic curves that $\wee = \bigcup_{X\geq 1} \wee^X$.

\begin{remark} \label{rem-testellipticcurve}	If $E/\Q$ is an elliptic curve, then we can associate a test elliptic curve $(X,n,\Sel_2)$ to $E$ as follows. Clearly, $X=\h(E)$ is the naive height of $E$, and the non-negative integer $n=\selrank(E)$ is the $2$-Selmer rank of $E$ (defined as in Section \ref{sec-Selmergroups}). Let $\Sel_2(E/\Q)$ be the $2$-Selmer group of $E/\Q$. Then, $\Sel_2(E/\Q)\cong (\Z/2\Z)^{n+t}$, where $t=\rank_{\Z/2\Z}(E(\Q)[2])$. Further, if we assume the finiteness of $\Sh(E/\Q)$, then $\rank_{\Z/2\Z}(\Sh(E/\Q)[2])=2s$ is even, and therefore $n=R_{E/\Q}+2s$, where $R_{E/\Q}$ is the $\Z$-rank of $E(\Q)$. In particular, $n\equiv R_{E/\Q}\bmod 2$. It follows that if $n$ is odd, then $1\leq R_{E/\Q}\leq n$ and $R_{E/\Q}$ is odd, so there is always an element of $\Sel_2(E/\Q)/(E(\Q)_\text{tors}/2E(\Q)_\text{tors})$ that comes from a point of infinite order from the Mordell--Weil group. Hence, when $n$ is odd, we are interested in the other $n-1$ generators of the Selmer group, to see if they come from the Mordell--Weil group, or generate non-trivial Sha elements. Moreover, there are $2m=n-1-(2s)$ generators of the Selmer group that come from the Mordell--Weil group. Thus, we define the set of symbols $\Sel_2$ by 
	$$
	\begin{cases} \text{ $s$ elements of the form }\Sh \text{ and $m=(n-2s)/2$ elements of the form MW} & \text{ if $n$ is even},\\
	 \text{ $s$ elements of the form }\Sh \text{ and $m=(n-1-2s)/2$ elements of the form MW} & \text{ if $n$ is odd}. 
	\end{cases}$$
	so that $n-(n\bmod 2)=2(m+s)$. We note here that $m=R_{E/\Q}/2$ when $R_{E/\Q}$ is even, and $m=(R_{E/\Q}-1)/2$ if the rank is odd. We will come back and explain in more detail why $\Sel_2(E)$ should have $\lfloor \frac{n}{2} \rfloor$ elements in Section \ref{sec-cramermodel2}.
	
\end{remark} 

A note about notation: if $E/\Q$ is an elliptic curve, then $\Sel_2(E/\Q)$ denotes the traditional $2$-Selmer group of $E/\Q$. However, if $E$ is a test curve, then $\Sel_2(E)$ denotes the vector of test Selmer elements of Definition \ref{defn-testellipticcurve}.

\begin{example}\label{ex-testellipticcurves}
	Let $E/\Q$ be the elliptic curve $y^2=x^3+2993x$, with height $4\cdot 2993^3=107245762628$. A $2$-descent shows that the Selmer group $\Sel_2(E/\Q)\cong (\Z/2\Z)^5$. Since $E(\Q)_\text{tors} \cong \Z/2\Z$, it follows that $\selrank(E)=4$. Further, a $4$-descent (using Magma) shows that $E(\Q)/2E(\Q)\cong  (\Z/2\Z)^3$, and $\Sh(E/\Q)[2]\cong (\Z/2\Z)^2$. Hence, this elliptic curve would be represented as a test elliptic curve by the triple
	$$(107245762628,4,(\text{MW},\Sh )).$$
	Similarly, the curve $E'\colon y^2=x^3-1679x$ has Selmer rank $3$, Mordell--Weil rank $1$, and so it would correspond to the triple 
	$$(18932679356,3,(\Sh )).$$
\end{example}

\begin{defn}\label{defn-TT}  We let $\mathbf{T}$ be a space of sequences of subsets of $\wee^X$, defined as follows: 
	$$\mathbf{T} = \left\{ \left(\wtt^X\right)_{X\geq 1}  : \wtt^X \subseteq \wee^X, \sum_{N=1}^X \# \wtt^N =\kappa X^{5/6}+O(X^{1/2}) \right\}.$$
If $I$ is a finite interval in $[1,\infty)$, and $\wtt \in \mathbf{T}$, we will write $\wtt(I)= \bigcup_{X\in I} \wtt^X$, and $\wts_n(I) = \bigcup_{X\in I} \wtt^X \cap \wss_n^X$. Finally, we define 
$$\pi_{\wtt}(I) = \#\wtt(I)=\sum_{X\in I} \# \wtt^X\ \text{ and }\ \pi_{\wts_n}(I) = \# \wts_n(I)= \sum_{X\in I} \# (\wtt^X\cap \wss_n^X).$$
\end{defn} 

\begin{remark}\label{rem-actualellipticcurves} 
	The sequence of test elliptic curves $\left(\E^X\right)_{X\geq 1}$ associated to ordinary elliptic curves over $\Q$ (as in Remark \ref{rem-testellipticcurve}), belongs to $\wtt$, by Theorem \ref{thm-brumer}. Thus, the goal is to predict the behaviour of $\left(\E^X\right)_{X\geq 1}$ from the average asymptotic behaviour of sequences in $\wtt$. 
\end{remark}

In the next definition, we fix a Selmer rank $n$, and we make $\wee^X$ into a probability space by defining a probability measure $P_n^X$ for each $X\geq 1$. 

\begin{hypa}[Hypothesis A, or $H_A$]\label{hypa}
	Let $n\geq 0$, and $X\geq 1$ be fixed. Let $\theta_n(X)$ be a function $[1,\infty)\to (0,1)$ such that $\lim_{X\to\infty} \theta_n(X)=s_n$.  We define a probability space $\left(\wee^X,\mathcal{F}_n^X,P_n^X\right)$ by defining a probability measure $P_n^X$ as follows:
	\begin{itemize}
		\item $\wee^X$ is an infinite discrete space of test elliptic curves of height $X$, and $\wss_n^X$ is the (infinite) subset of test elliptic curves $E$ of height $X$ with $\selrank(E)=n$.
		\item $\mathcal{F}_n^X=\left\{\emptyset,\wss_n^X,\wee^X\setminus \wss_n^X, \wee^X \right \}$.
		\item $P_n^X\left(\wss_n^X\right) = \theta_n(X)$, and $P_n^X\left(\wee^X \setminus \wss_n^X\right)=1-\theta_n(X)$.
	\end{itemize}
	If $X_1,\ldots,X_m$ are natural numbers, then we endow $\prod_{i=1}^m \wee^{X_i}$ with the product measure  $$\prod_{i=1}^m \left(\wee^{X_i},\mathcal{F}_n^{X_i},P_n^{X_i}\right).$$
\end{hypa} 

\begin{lemma} \label{lem-cramer1}
	Let $\left(\wee^X,\mathcal{F}_n^X,P_n^X\right)$ be the probability space defined by Hypothesis A, and let $Y_{\Sel,n,X}\colon\wee^X\to \{0,1\}$ be the function that takes values
	$$Y_{\Sel,n,X}(E)=\begin{cases}
	1 & \text{ if } \selrank(E(\Q))=n,\\
	0 & \text{ otherwise.}
	\end{cases}$$
	Then 
	\begin{enumerate}
		\item  $Y_{\Sel,n,X}$ is a random variable that  follows a Bernoulli distribution with probability $\theta_n(X)$, such that $\lim_{X\to \infty} \theta_n(X)=s_n$.
		\item Let $m\geq 1$, and let $E_1,\ldots, E_m$ be a sample of size $m$ test elliptic curves picked independently from $\wee$, and suppose $E_i$ is a test elliptic curves of height $X_i\geq 1$. Then, the events $$Y_{\Sel,n,X_1}(E_1)=1,\ \ldots ,\ Y_{\Sel,n,X_m}(E_m)=1$$ are mutually independent.
	\end{enumerate}
\end{lemma}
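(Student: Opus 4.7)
The plan is to verify both claims by directly unpacking the probability space structure defined in Hypothesis A. Each part is essentially a measurability/definitional check rather than a hard calculation, so the main work is to be careful about what the sample space, $\sigma$-algebra, and measure actually are.

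For part (1), I would first observe that $Y_{\Sel,n,X}^{-1}(1) = \wss_n^X$ and $Y_{\Sel,n,X}^{-1}(0) = \wee^X \setminus \wss_n^X$, both of which lie in $\mathcal{F}_n^X = \{\emptyset, \wss_n^X, \wee^X \setminus \wss_n^X, \wee^X\}$. This shows that $Y_{\Sel,n,X}$ is measurable, hence a random variable on $(\wee^X, \mathcal{F}_n^X, P_n^X)$. Then the distribution of $Y_{\Sel,n,X}$ is determined by
\[
P_n^X(Y_{\Sel,n,X} = 1) = P_n^X(\wss_n^X) = \theta_n(X), \qquad P_n^X(Y_{\Sel,n,X} = 0) = 1 - \theta_n(X),
\]
which is exactly the Bernoulli distribution $B(1, \theta_n(X))$. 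The limit statement $\lim_{X\to\infty}\theta_n(X) = s_n$ is part of the setup of Hypothesis A, so no further work is needed there.

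For part (2), the key input is that $\prod_{i=1}^m \wee^{X_i}$ is equipped with the product measure $\prod_{i=1}^m (\wee^{X_i}, \mathcal{F}_n^{X_i}, P_n^{X_i})$. Writing $A_i = \{E_i \in \wee^{X_i} : Y_{\Sel,n,X_i}(E_i) = 1\} = \wss_n^{X_i}$, the event $\bigcap_{i=1}^m \{Y_{\Sel,n,X_i}(E_i) = 1\}$ corresponds to the measurable rectangle $A_1 \times \cdots \times A_m$ in the product space. By the definition of product measure,
\[
\left(\prod_{i=1}^m P_n^{X_i}\right)(A_1 \times \cdots \times A_m) = \prod_{i=1}^m P_n^{X_i}(A_i) = \prod_{i=1}^m \theta_n(X_i),
\]
which is precisely the product of the individual probabilities. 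This is the definition of mutual independence of the events $\{Y_{\Sel,n,X_i}(E_i) = 1\}$, so part (2) follows.

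The only potential subtlety is bookkeeping: strictly speaking $\wee^X$ is infinite and discrete, so one should note that $\mathcal{F}_n^X$ as given is a valid (though coarse) $\sigma$-algebra on which $P_n^X$ is a well-defined probability measure, and that $Y_{\Sel,n,X}$ is measurable precisely because the $\sigma$-algebra was chosen to make $\wss_n^X$ an event. No deep probability is invoked — the lemma is really a translation of Hypothesis A into the language of random variables, which is exactly how the rest of the paper will want to use it.
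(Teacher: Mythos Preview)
Your proposal is correct and follows essentially the same approach as the paper's proof: both parts amount to unpacking the definitions in Hypothesis~A, computing the Bernoulli probabilities directly from $P_n^X$, and using the product-measure structure to verify independence. Your version adds an explicit measurability check and phrases the independence argument slightly more compactly via the rectangle $A_1\times\cdots\times A_m$, but these are cosmetic differences rather than a different route.
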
 
\begin{proof}
	For (1), from the definitions, if $E\in \wee^X$, then the probability that $Y_{\Sel,n,X}(E)=1$ is given by
	$$ P_n^X(\{E \in \wee^X: Y_{\Sel,n,X}(E)=1 \}) = P_n^X(\wss_n^X)=\theta_n(X),$$
	and similarly, 
	$$ P_n^X(\{E \in \wee^X: Y_{\Sel,n,X}(E)=0 \}) = P_n^X(\wee^X\setminus\wss_n^X)=1-\theta_n(X).$$
	Thus, $Y_{\Sel,n,X}(E)$ follows a Bernoulli distribution $B(1,\theta_n(X))$. 
	
	For (2), consider $(E_1,\ldots,E_m)\in \prod_{i=1}^m\wee^{X_i}$. By Hypothesis A, the probability measure on the product space is the product measure $\prod_{i=1}^m \left(\wee^{X_i},\mathcal{F}^{X_i},P_n^{X_i}\right)$. Consider the events $$A_i=\{(E_1,\ldots, E_m) : E_i \in \wss_n^{X_i} \}$$ for $i=1,\ldots,m$. Then,
	$$ P_n^X(A_1)\cdots P_n^X(A_m) = \prod_{i=1}^m \left(P_n^{X_i}(\wss_n^{X_i})\cdot \prod_{j\neq i} (P_n^{X_j}(\wee^{X_j}))\right) = \prod_{i=1}^m \theta_n(X_i).$$
	On the other hand $A_1\cap  \cdots \cap A_m = \{(E_1,\ldots, E_m) : E_i  \in \wss_n^X \text{ for all } i=1,\ldots,m\}$ so
	$$P_n^X(A_1\cap  \cdots \cap A_m) = P_n^{X_1}(\wss_n^{X_1})\cdots P_n^{X_m}(\wss_n^{X_m}) = \prod_{i=1}^m \theta_n(X_i).$$
	Thus, $P_n^X(A_1)\cdots P_n^X(A_m)=P_n^X(A_1\cap  \cdots \cap A_m)$. Similarly, one can show that if $i\neq j$, then $P_n^X(A_i)P_n^X(A_j)=P_n^X(A_i\cap A_j)$. Thus, the events are mutually independent, as desired.
\end{proof}

\begin{remark}\label{rem-isog1}
	Suppose $E/\Q$ and $E'/\Q$ are two non-isomorphic elliptic curves. If $E$ and $E'$ happen to be in the same isogeny class, then their Selmer ranks will not be independent events. However, by a theorem of Kenku (\cite{kenku}), an elliptic curve $E/\Q$ is isogenous to at most $8$ non-isomorphic elliptic curves over $\Q$. Thus, we may disregard the possibility of isogenous curves, as it would only contribute a negligible error that would vanish as we increase the sample size. Indeed, for a given height $X$, there are about $\kappa X^{5/6}$ elliptic curves of height up to $X$, and if we sample $m$ curves up to height $X$, the probability that none of them are isogenous is given by, at least by,
	$$ 1\cdot \frac{\kappa X^{5/6} - 8}{\kappa X^{5/6}}\cdot \frac{\kappa X^{5/6} - 8\cdot 2}{\kappa X^{5/6}}\cdot \frac{\kappa X^{5/6} - 8\cdot 3}{\kappa X^{5/6}}\cdots \cdot \frac{\kappa X^{5/6} - 8\cdot (m-1)}{\kappa X^{5/6}} $$
	which goes to $1$ as $X$ goes to infinity.
\end{remark}

\section{The number of curves with Selmer rank $n$ up to height $X$}\label{sec-selmer-ratio}

In this section we prove several consequences of the probabilistic model defined in Section \ref{sec-cramermodel}, and we investigate the properties of $\theta_n(X)$.

\begin{cor}\label{cor-selbinomial}
Assume $H_A$, and let $\mathfrak{E}=\{ E_1,\ldots,E_m \}$ be a sample of size $m$ of test elliptic curves in $\wee^X$ chosen independently. Then, the number of curves in $\mathfrak{E}$ of Selmer rank $n$ follows a binomial distribution $B(m,\theta_n(X))$. In particular the expected value of $\#(\mathfrak{E}\cap \wss_n)/\# \mathfrak{E}$ is  $\theta_n(X)$ with standard error $\sqrt{\frac{1}{m}\theta_n(X)(1-\theta_n(X))}$. More generally, if $\{E_1,\ldots,E_m\}$ are test elliptic curves in $\wee$, with $E_i$ of height $X_i$ for $i=1,\ldots,m$, and chosen at random, then 
$$\mathbb{E}(\#(\mathfrak{E}\cap \wss_n)/\# \mathfrak{E})=  \frac{1}{m}\sum_{i=1}^m\theta_n(X_i)$$ with standard error $\sqrt{\frac{1}{m^2}\sum \theta_n(X_i)(1-\theta_n(X_i))}$.
\end{cor}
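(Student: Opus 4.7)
The proof is essentially a bookkeeping exercise once Lemma \ref{lem-cramer1} is in hand. The plan is to express $\#(\mathfrak{E}\cap\wss_n)$ as a sum of the indicator random variables $Y_{\Sel,n,X}$ introduced in Lemma \ref{lem-cramer1}, and then apply standard facts about Bernoulli sums and their normalized means.

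First, I would write
\[
\#(\mathfrak{E}\cap\wss_n) \;=\; \sum_{i=1}^m Y_{\Sel,n,X}(E_i).
\]
By Lemma \ref{lem-cramer1}(1), each summand follows the Bernoulli distribution $B(1,\theta_n(X))$, and by Lemma \ref{lem-cramer1}(2), the $m$ variables $Y_{\Sel,n,X}(E_1),\ldots,Y_{\Sel,n,X}(E_m)$ are mutually independent (here all heights coincide, $X_i=X$, but independence is exactly the hypothesis of Hypothesis A applied to the product space $\prod_{i=1}^m \wee^X$). A sum of $m$ independent $B(1,\theta_n(X))$ variables is by definition $B(m,\theta_n(X))$, proving the first assertion.

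For the expected value and standard error, I would apply linearity of expectation to obtain
\[
\mathbb{E}\!\left(\frac{\#(\mathfrak{E}\cap\wss_n)}{m}\right) = \frac{1}{m}\sum_{i=1}^m \mathbb{E}(Y_{\Sel,n,X}(E_i)) = \frac{1}{m}\cdot m\,\theta_n(X) = \theta_n(X).
\]
For the standard error (the square root of the variance of the mean), independence gives
\[
\Var\!\left(\frac{1}{m}\sum_{i=1}^m Y_{\Sel,n,X}(E_i)\right) = \frac{1}{m^2}\sum_{i=1}^m \Var(Y_{\Sel,n,X}(E_i)) = \frac{\theta_n(X)(1-\theta_n(X))}{m},
\]
using the fact that a $B(1,p)$ variable has variance $p(1-p)$. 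Taking square roots yields the claimed standard error $\sqrt{\theta_n(X)(1-\theta_n(X))/m}$.

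For the more general statement with varying heights $X_1,\ldots,X_m$, the argument is identical: independence of $Y_{\Sel,n,X_1}(E_1),\ldots,Y_{\Sel,n,X_m}(E_m)$ still follows from Lemma \ref{lem-cramer1}(2) applied to the product space $\prod_{i=1}^m \wee^{X_i}$, and each individual variable is now $B(1,\theta_n(X_i))$. Linearity of expectation gives the mean $\frac{1}{m}\sum_i \theta_n(X_i)$, and independence plus the variance-of-a-sum identity gives $\frac{1}{m^2}\sum_i \theta_n(X_i)(1-\theta_n(X_i))$ for the variance of the mean, whose square root is the reported standard error. Note that in this general case the total sum is no longer a single Binomial (it is a Poisson-binomial distribution), so I would deliberately phrase only the expected value and standard error assertions in this part, not a distributional statement. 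There is no real obstacle in this proof; the entire content is the application of Lemma \ref{lem-cramer1} together with elementary properties of the Bernoulli distribution and independent sums.
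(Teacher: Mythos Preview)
Your proof is correct and takes essentially the same approach as the paper: both express $\#(\mathfrak{E}\cap\wss_n)$ as a sum of the Bernoulli indicators $Y_{\Sel,n,X_i}(E_i)$, invoke independence from Hypothesis~A (equivalently Lemma~\ref{lem-cramer1}), and then compute the mean and variance directly. The only cosmetic difference is order---the paper treats the general (varying heights) case first and then specializes, while you do the reverse---and your remark identifying the general sum as Poisson-binomial is a correct elaboration the paper omits.
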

\begin{proof}
Let us assume $H_A$ and let us first show the most general case. Let $E_1,\ldots,E_m$ be test elliptic curves in $\wee$  of height $X_1,\ldots,X_m$, respectively, chosen at random. In particular, by $H_A$, each random variable $Y_{\Sel,n,X_i}(E_i)\sim B(1,\theta_n(X_i))$, and since the curves are chosen at random, $H_A$ says that the events $Y_{\Sel,n,X_i}(E_i)=1$ are mutually independent. Then, the number of elements in $\# \mathfrak{E}\cap \wss_n$ can be expressed as 
$$t =\# \mathfrak{E}\cap \wss_n= \sum_{i=1}^m Y_{\Sel,n,X_i}(E_i).$$
It follows that the expected value of $t$ is
$$\mathbb{E}(t) = \mathbb{E}\left( \sum_{i=1}^m Y_{\Sel,n,X_i}(E_i)\right) = \sum_{i=1}^m \mathbb{E}(Y_{\Sel,n,X_i}(E_i))=\sum_{i=1}^m \theta_n(X_i),$$
and so the expected value of $\# \mathfrak{E}\cap \wss_n/\#\mathfrak{E}$ is $\frac{1}{m}\sum \theta_n(X_i)$. The standard error of the approximation of $t/m$ by $\frac{1}{m}\sum \theta_n(X_i)$ is given by the square root of the variance of $t/m$. We compute
$$\operatorname{Var}\left(\frac{1}{m}\sum_{i=1}^m Y_{\Sel,n,X_i}(E_i) \right) = \frac{1}{m^2}\sum_{i=1}^m \operatorname{Var}(Y_{\Sel,n,X_i}(E_i))=\frac{1}{m^2}\sum_{i=1}^m \theta_n(X_i)(1-\theta_n(X_i)),$$
were we have used the fact that $Y_{\Sel,n,X_i}(E_i)$ are independent, which implies they are uncorrelated, and therefore the covariance terms vanish. Thus, the standard error is $ \sqrt{\frac{1}{m^2}\sum \theta_n(X_i)(1-\theta_n(X_i))}$ as claimed.

Now, if $X=X_1=\cdots =X_m$, then $t =\# \mathfrak{E}\cap \wss_n= \sum_{i=1}^m Y_{\Sel,n,X}(E_i)$ follows a binomial $B(m,\theta_n(X))$, with mean $m\cdot \theta_n(X)$ and variance $\frac{1}{m}\theta_n(X)(1-\theta_n(X))$, so the expected value of $t/m$ is $\theta_n(X)$ with standard error $\sqrt{\frac{1}{m}\theta_n(X)(1-\theta_n(X))}$, as desired.
\end{proof}

\begin{cor}\label{cor-travelratio}
If we assume $H_A$, then 
$\sum_{n=0}^\infty \theta_n(X) = 1$. Moreover, if $\wtt\in\TT$ and we define 
$$\theta_n(X,N) = \frac{\pi_{\wts_n}((X,X+N])}{\pi_{\wtt}((X,X+N])},$$ for each $X\geq 1$ and $N\geq 1$ if $\pi_{\wtt}((X,X+N])>0$, and  $\theta_n(X,N) =0$ otherwise. Then \begin{enumerate}
\item The expected value of $\pi_{\wts_n}((X,X+N])$ is given by the formula
$$\mathbb{E}(\pi_{\wts_n}((X,X+N])) \myeq \frac{5\kappa}{6}\cdot\int_{X}^{X+N}  \frac{\theta_n(H)}{H^{1/6}}\, dH + O\left(\frac{N}{X^{1/2}}\right),$$
on average (see Definition \ref{defn-inave}), where $\theta_n(X)=s_n+O(X^{-e_n})$.
\item The expected value of $\displaystyle \theta_n(X,N)$ is $\theta_n(X)+O(X^{-e_n})$, with a standard error given on average by $$\sqrt{\frac{1}{\pi_{\wtt}((X,X+N])}\cdot (\theta_n(X)\cdot (1-\theta_n(X)) + O(X^{-e_n}))}.$$
\item Let $N=N(X)$ be a function of $X$, such that $\pi_{\wtt}((X,X+N(X)])$ goes to $\infty$ as $X\to \infty$. Then, $\theta_n(X,N(X)) \to s_n$ in probability,  in the sense that the expected value of $\theta_n(X,N(X))$ goes to $s_n$ and its variance goes to zero. 
\end{enumerate}
\end{cor}
\begin{proof}
Let $E=(X,n,\Sel_2)$ be a fixed test elliptic curve of height $X$. Since $\selrank(E)$ takes precisely one value (a non-negative number $n\geq 0$), it follows that
$$\sum_{k=0}^\infty \theta_k(X) =\sum_{k\geq 0} \operatorname{Prob}\left(Y_{\Sel,k,X}(E) =1\right)=1,$$
by the laws of probability. For part (1) of the statement, let $\wtt\in\TT$ be arbitrary (as in Definition \ref{defn-TT}). We note that
$$\pi_{\wts_n}((X,X+N]) = \sum_{E\in\wtt((X,X+N])} Y_{\Sel,n,X}(E).$$
and therefore we may use Corollary \ref{cor-selbinomial} to obtain the expected value.
\begin{eqnarray*}\mathbb{E}(\pi_{\wts_n}((X,X+N])) &=& \sum_{E\in\wtt((X,X+N])} \theta_n(\h(E))\\
& =&\sum_{H=X+1}^{X+N} \sum_{E\in\wtt([H,H])} \theta_n(H) = \sum_{H=X+1}^{X+N} \pi_{\wtt}([H,H])\cdot \theta_n(H)
\end{eqnarray*}
Since $\wtt\in\TT$, Corollary \ref{cor-countcurvesaverage} shows that $\pi_{\wtt}(H)-\pi_{\wtt}(H-1)=\pi_{\wtt}([H,H]) \myeq \frac{5\kappa}{6}\cdot\int_{H-1}^{H}  \frac{1}{T^{1/6}}\, dT + O(H^{-1/2})$  (see Definition \ref{defn-inave} for the definition of $\myeq$ in this context). Since $|\theta_n(X)|\leq 1$ for all $X$, and if we write $\theta_n(X)=s_n+O(X^{-e_n})$ and assume that $|\theta'(X)|\leq (\theta^{\text{sup}}\cdot e)/X$, for all $X\geq 0$, then Cor. \ref{cor-countcurvesaverage1} implies that $\pi_{\wtt}([H,H])\theta_n(H) \myeq \frac{5\kappa}{6}\cdot\int_{H-1}^{H}  \frac{\theta_n(T)}{T^{1/6}}\, dT + O(H^{-1/2})$. Thus, 
\begin{eqnarray*}
\mathbb{E}(\pi_{\wts_n}((X,X+N])) &\myeq &  \sum_{H=X+1}^{X+N} \left(\frac{5\kappa}{6}\cdot\int_{H-1}^{H}  \frac{\theta_n(T)}{T^{1/6}}\, dT +  O(H^{-1/2})\right) \\
&=& \frac{5\kappa}{6}\cdot\int_{X}^{X+N}  \frac{\theta_n(H)}{H^{1/6}}\, dH +  O\left(\frac{N}{X^{1/2}}\right).\end{eqnarray*}
For part (2), 

\begin{align*} \mathbb{E}(\theta_n(X,N)) & =\mathbb{E}\left(\frac{\pi_{\wts_n}((X,X+N])}{\pi_{\wtt}((X,X+N])} \right) = \mathbb{E}\left(\frac{\sum_{E\in\wtt((X,X+N])} Y_{\Sel,n,X}(E)}{\pi_{\wtt}((X,X+N])} \right)\\
&= \sum_{H=X+1}^{X+N}  \frac{\pi_{\wtt}([H,H])}{\pi_{\wtt}((X,X+N])} \cdot\theta_n(H).
\end{align*}
Putting $\alpha_H = \pi_{\wtt}([H,H])/\pi_{\wtt}((X,X+N])$, we note that $\sum_{H=X+1}^{X+N} \alpha_H = 1$. Since $\theta_n(X) = s_n + O(X^{-e_n})$, Lemma \ref{lem-sumofthetas} shows that $\mathbb{E}(\theta_n(X,N))=\theta_n(X) + O(X^{-e_n})$, as claimed.  Similarly, the variance is given by
\begin{eqnarray*}
\operatorname{Var}(\theta_n((X,X+N])) &=& \frac{1}{(\pi_{\wtt}((X,X+N]))^2}\sum_{E\in\wtt((X,X+N])} \theta_n(\h(E))(1-\theta_n(\h(E)))\\
&=& \frac{1}{\pi_{\wtt}((X,X+N])}\sum_{H=X+1}^{X+N} \frac{\pi_{\wtt}([H,H])}{\pi_{\wtt}((X,X+N])}\cdot  \theta_n(H)(1-\theta_n(H))\\
&=&  \frac{1}{\pi_{\wtt}((X,X+N])}\cdot (\theta_n(X)\cdot (1-\theta_n(X)) + O(X^{-e_n})),
\end{eqnarray*}
by Lemma \ref{lem-sumofthetas}. Finally, the maximum value of the function $f(x)=x(1-x)$ in $[0,1]$ is $1/4$, and therefore standard error can be bounded by $\sqrt{(1/4+O(X^{-e_n}))/\pi_{\wtt}((X,X+N])}$. If we choose $N=N(X)$ such that $\pi_{\wtt}((X,X+N(X)])$ goes to $\infty$ as $X\to \infty$, then the variance of $\theta(X,N)$, and the standard error of the approximation $\theta_n(X,N)$ by $\theta_n(X)$ go to zero as $X\to \infty$. Hence,
$$\theta_n(X,N(X)) \to s_n $$
as $X\to \infty$, in probability, in the sense that the expected value of $\theta_n(X,N(X))$ goes to $s_n$ and the variance goes to zero. This concludes the proof of the corollary.
\end{proof}

\subsection{Refining Hypothesis A}

 In order to refine Hypothesis A, we need candidates for our functions $\theta_n(X)$. We shall use the sequence $(\E^X)_{X\geq 1}$ as a representative of $\TT$ (see Remarks \ref{rem-testellipticcurve} and \ref{rem-actualellipticcurves}). The BHKSSW data (Section \ref{sec-allcurves}, \cite{BHKSSW}) is thus used to estimate the function $\theta_n(X)$ using the  moving ratios $\theta_n(X,N)$ of Corollary \ref{cor-travelratio}. We have plotted approximate values of  $\theta_n(X,0.025\cdot 10^9)$ for $n=1,\ldots,5$ using the BHKSSW database, and the graphs can be found in Figure \ref{fig2}.

\begin{figure}[h!]
\includegraphics[width=6.6in]{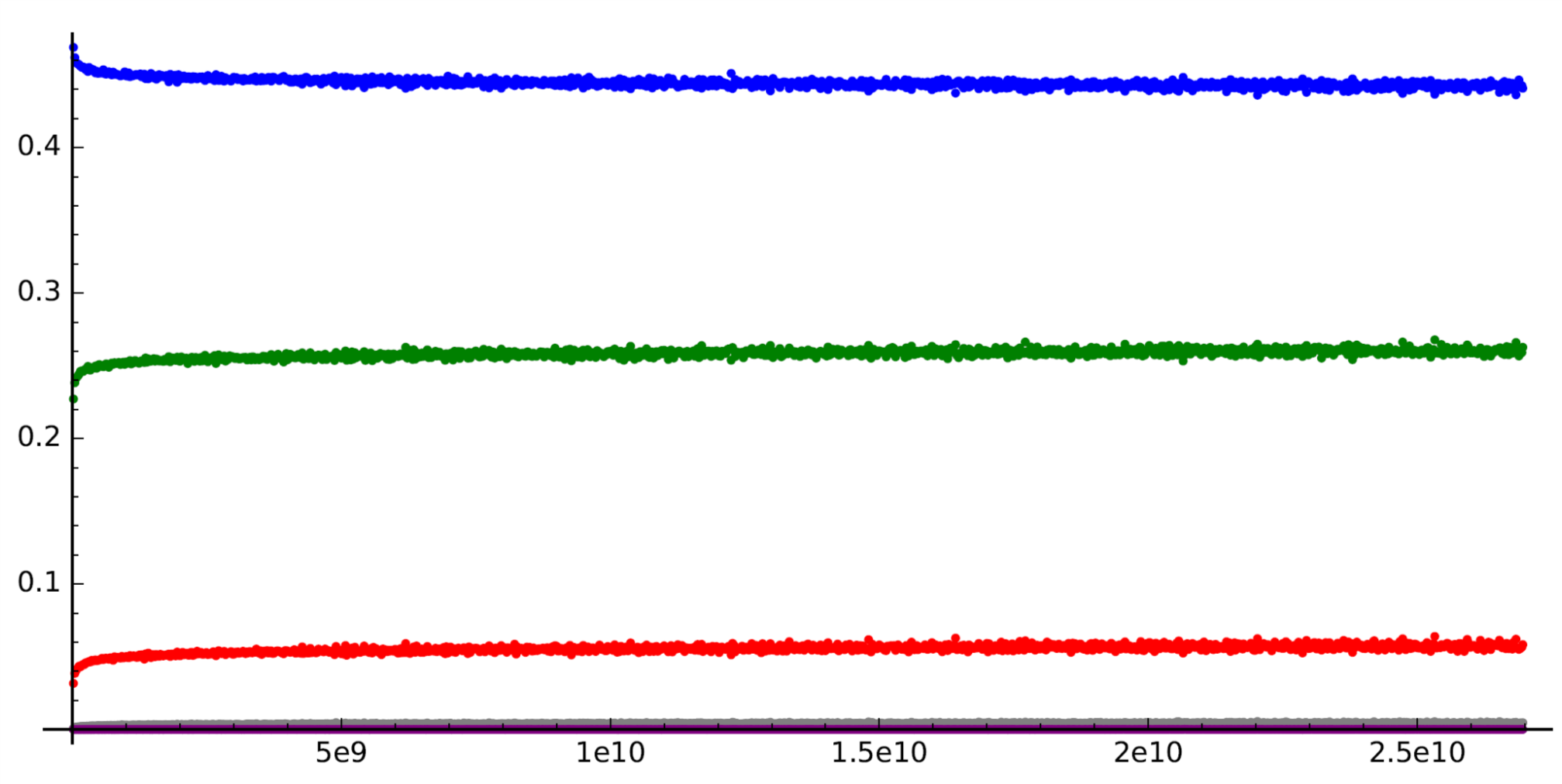}
\caption{Graphs of the moving ratios $\theta_n(X,0.025\cdot 10^9)$ for $n=1$ (blue), $2$ (green), $3$ (red), $4$ (gray), $5$ (purple).}
\label{fig2}
\end{figure}
In Table \ref{tab2} we record the last values of $\theta_n(X,0.025\cdot 10^9)$ that appear in the graphs (which correspond to $X\approx 2.6975\cdot 10^{10}$). We also record the values of $\pi_{\s_n}$ in $[2.6975\cdot 10^{10},2.7\cdot 10^{10}]$. The total number of elliptic curves in the same interval is $\num[group-separator={,}]{182823}$.

\begin{table}[h!]
\centering
\def\arraystretch{1.5}
\begin{tabular}{c||c|c|c|c|c}
\hline
$n$ & $1$ &  $2$ & $3$ & $4$ & $5$\\
\hline 
$\pi_{\s_n}([2.6975\cdot 10^{10},2.7\cdot 10^{10}])$ & $\num[group-separator={,}]{80996}$ & $\num[group-separator={,}]{47427}$ & $\num[group-separator={,}]{10556}$ & $821$ & $29$\\
\hline 
$\theta_n(2.6975\cdot 10^{10},0.025\cdot 10^9)$ & $0.44083621$ &
 $0.26278969$ & $0.05835152$ & $0.00463836$ & $0.00008751$\\
 \hline
 $s_n$ & $0.41942244$ & $0.27961496$ & $0.07988998$ & $0.01065199$ & $0.00068722$ \\
 \hline 
\end{tabular}
\caption{The number of curves of Selmer rank $1\leq n\leq 5$, and the values of $\theta_n(X,N)$ in the interval $[2.6975\cdot 10^{10},2.7\cdot 10^{10}]$, together with the values of $s_n$.}
\label{tab2}
\end{table} 

Finally, we have tried to model the graphs of $\theta_n(X,N)$ using simple rational functions (and assuming the conjectural limit values given by Poonen--Rains \cite{poonen}), and we have found using SageMath best-fit models for the data of $\theta_n(X,N)$ of the form 
$$\theta_n(X,N) \approx \frac{s_n}{1+C_nX^{-e_n}}.$$
In other words, we used a linear regression to find the best coefficients $C_n$ and $e_n$ to fit the data we have. We provide the best-fit values of $C_n$ and $e_n$ in Table \ref{tab3}. 

\begin{table}[h!]
\centering
\def\arraystretch{1.5}
\begin{tabular}{c||c|c|c|c|c}
\hline
$n$ & $1$ &  $2$ & $3$ & $4$ & $5$\\
\hline 
$C_n$ & $-0.40116957$ & $1.41108621$ & $11.18222736$ & $179.71749981$ & $95474.85098037$\\
 \hline
 $e_n$ & $0.08540201$ & $0.12348659$ & $0.14061542$ & $0.20339670$ & $0.39937065$ \\
 \hline 
\end{tabular}
\caption{The parameters of the best-fit models $\theta_n(X,N)\approx s_n/(1+C_nX^{-e_n})$.}
\label{tab3}
\end{table} 

The models constructed above are remarkably good approximations of the values of $\theta_n(X,0.025\cdot 10^9)$, at least up to height $2.7\cdot 10^{10}$. See Figures \ref{fig-ratiomodels123} and \ref{fig-ratiomodels45}. Thus, we refine our Cram\'er-like model of Section \ref{sec-cramermodel} by specifying $\theta_n(X)$ up to two constants $C_n$ and $e_n$.

\begin{hypa}[Hypothesis $H_A'$]\label{conj-selmerratio}
Hypothesis $H_A$ holds and, for each $n\geq 1$, there are constants $C_n$ and $e_n$ such that $\displaystyle \theta_n(X)= \frac{s_n}{1+C_nX^{-e_n}}.$  Moreover, if $n=1,\ldots, 5$, then
the approximate values of $C_n$ and $e_n$ are as in Table \ref{tab3}. Further, we shall assume that there is a constant $C_e>0$ such that $|e_n|\leq C_e$, and $C_e$ does not depend on $n$.
\end{hypa}

\begin{remark}\label{rem-derivativeoftheta}
	If $\displaystyle \theta_n(X)= \frac{s_n}{1+C_nX^{-e_n}},$ then $\displaystyle \theta_n'(X) = \frac{ s_n\cdot e_n \cdot C_n}{X^{1+e_n}(1+C_nX^{-e_n})^2} =\frac{ s_n\cdot e_n \cdot (C_n X^{-e_n})}{X(1+C_nX^{-e_n})^2} $. Since the function $y/(1+y)^2$ has a maximum value of $1/4$ in $[0,\infty)$, it follows that we have a bound $|\theta_n'(X)|\leq s_n\cdot e_n/X^{1+e_n}\leq s_n\cdot C_e/X^{1+e_n}$ for all $X\geq 0$.
\end{remark}

\begin{lemma}\label{lem-seriesconverge}
	Let $s_n$ and $e_n$ be the quantities defined, respectively, in Section \ref{sec-Selmergroups} and Hypothesis \ref{conj-selmerratio} (thus, we assume that $|e_n|\leq C_e$). Then, $\sum_{n\geq 1} s_n$, $\sum_{n\geq 1} ns_n$, and $\sum_{n\geq 1} ns_ne_n$ are convergent.
\end{lemma}
\begin{proof}
	 Let us define $t_1=s_1$ and 
	$t_n = t_1/(2^{\frac{n(n-1)}{2}-1})$
	for $n\geq 2$. Then, the definition of $s_n$ (in Section \ref{sec-Selmergroups}) implies that $s_n\leq t_n$, and so,
	$$\sum_{n=1}^N s_n \leq \sum_{n=1}^N n s_n \leq  \sum_{n=1}^N n t_n\leq \sum_{n=1}^N  \frac{n t_1}{2^{\frac{n(n-1)}{2}-1}}$$
	for any $N\geq 2$. In particular, $\sum_{n\geq 1} s_n$ and $\sum_{n\geq 1} ns_n$ converge (in fact, it can be shown that $\sum_{n\geq 1} s_n = 1$ because they are probabilities that add up to $1$). Moreover, if $|e_n|\leq C_e$, then
	$$\sum_{n\geq 1}^N ns_ne_n \leq C_e \sum_{n\geq 1}^N ns_n$$
	and it follows that $\sum ns_ne_n$ is also convergent.
\end{proof}

\begin{remark}
Let us assume Hypothesis $H_A'$, and let us use Corollary \ref{cor-travelratio} to estimate the error in the approximation $\theta_n(X)\approx \theta_n(X,N)$. The error should be given by the expression
\begin{align*} \text{err}_n(X,N) &=\sqrt{\frac{6\cdot (X+N)^{1/6}\cdot \theta_n(X)(1-\theta_n(X))}{5N\kappa}}\\
&=\sqrt{\frac{6\cdot (X+N)^{1/6}}{5N\kappa}\cdot \frac{s_n}{1+C_nX^{-e_n}}\left(1-\frac{s_n}{1+C_nX^{-e_n}}\right)},
\end{align*}
where we are using the fact that $\pi_{\wtt}((X,X+N]) \myeq (5N\kappa/6)/(X+N)^{1/6} + O(X^{-1/2})$, by Cor. \ref{cor-countcurvesaverage}. 
In Table \ref{tab-errors} we include the values of $\theta_n(X,N)$ based on the BHKSSW data, our model of $\theta_n(X)$ with the constants from Table \ref{tab3}, the error $|\theta_n(X,N)-\theta_n(X)|$, and the predicted standard error $\text{err}_n(X,N)$, for $X=2.6975\cdot 10^{10}$ and $N=0.025\cdot 10^9$.
\begin{table}[h!]
\centering
\def\arraystretch{1.5}
\begin{tabular}{c||c|c|c|c|c}
\hline
$n$ & $1$ &  $2$ & $3$ & $4$ & $5$\\
\hline 
$\theta_n(2.6975\cdot 10^{10},0.025\cdot 10^9)$ & $0.44083621$ &
 $0.26278969$ & $0.05835152$ & $0.00463836$ & $0.00008751$\\
 \hline
 $\theta_n(2.6975\cdot 10^{10})$ & $0.44223400$ & $0.26066727$ & $0.05781814$ & $0.00451697$ & $0.00009141$ \\
 \hline 
 $|\text{Error}|$ & $0.00139779$ & $0.00212241$ & $0.00053337$ & $0.00012138$ & $0.00000390$\\
 \hline 
 $\text{err}_n(2.6975\cdot 10^{10},0.025\cdot 10^9)$ & $0.00115688$ & $0.00102258$ & $0.00054367$ & $0.00015619$ & $0.00002227$\\
 \hline 
\end{tabular}
\caption{Values of: $\theta_n(X,N)$, our model of $\theta_n(X)$, the error $|\theta_n(X,N)-\theta_n(X)|$, and the predicted standard error $\text{err}_n(X,N)$, for $X=2.675\cdot 10^{10}$ and $N=0.025\cdot 10^9$.}
\label{tab-errors}
\end{table} 
\end{remark}

\begin{remark}\label{rem-sellargeheight}
The BHKSSW database (\cite{BHKSSW}) also includes small databases of random samples of elliptic curves at larger heights. In particular, for each $k \in [11, 16]$, they calculated the Selmer rank and rank of a set $\mathcal{L}_k$ consisting of about  $\num[group-separator={,}]{100000}$ curves from a uniform distribution of
all curves in the height range $[10^k, 2 \cdot 10^k)$. We have tested $H_A'$ on these sets $\mathcal{E}_k$ of curves of large height. In Table \ref{tab-errorslargeheight}, we include the value of the moving ratio for $\mathcal{E}_{16}\cap \s_n$, the value of $\theta_n(10^{16})$, the error, and the predicted error $\text{err}_n$. The predicted error for $n=5$ is too large (similarly for $n=4$ to a lesser degree), so the sample is just too small to provide significant evidence. Otherwise, the data for $n=1,2,3$ shows that Hypothesis $H_A'$ seems to be a very good match for the data, even for large heights.
\begin{table}[h!]
\centering
\def\arraystretch{1.5}
\begin{tabular}{c||c|c|c|c|c}
\hline
$n$ & $1$ &  $2$ & $3$ & $4$ & $5$\\
\hline 
$\#\mathcal{E}_{16}\cap \s_n$ & $\num[group-separator={,}]{42631}$ & $\num[group-separator={,}]{27543}$ &
 $\num[group-separator={,}]{7327}$ & $836$ & $38$ \\
\hline 
$\#\mathcal{E}_{16}\cap \s_n/\#\mathcal{E}_{16}$ & $0.42636116$ & $0.27546305$ &
 $0.07327879$ & $0.00836100$ & $0.00038004$ \\
 \hline
 $\theta_n(10^{16})$ & $0.42678631$ & $0.27550444$ & $0.07516196$ & $0.00968314$ & $0.00066148$  \\
 \hline 
 $|\text{Error}|$ & $0.00042515$ & $0.00004138$ & $0.00188317$ & $0.00132213$ & $0.00028144$\\
 \hline 
 $\text{err}_n(\mathcal{E}_{16}\cap \s_n)$ & $0.00239552$ & $0.00269200$ & $0.00308012$ & $0.00338681$ & $0.00275683$ \\
 \hline 
\end{tabular}
\caption{Values of: the moving ratio $\theta_n$, our model of $\theta_n(X)$, the error, and the predicted standard error $\text{err}_n(X,N)$, for the database $\mathcal{E}_{16}$ of height $X\approx 10^{16}$.}
\label{tab-errorslargeheight}
\end{table} 
\end{remark}

\begin{figure}[h!]
\includegraphics[width=6.6in]{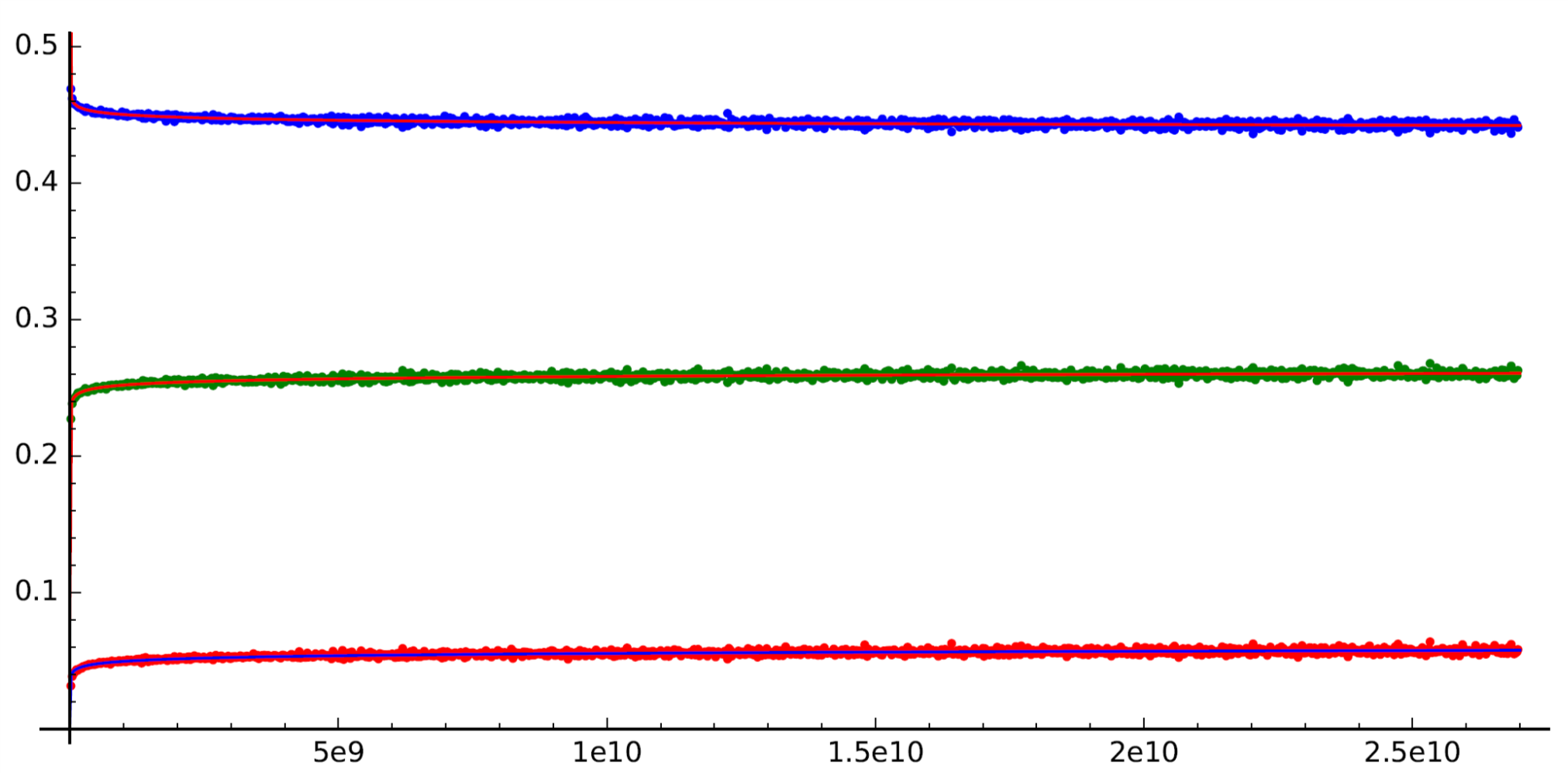}
\caption{Graphs of the moving ratios $\theta_n(X,0.025\cdot 10^9)$ for $n=1$ (blue), $2$ (green), $3$ (red), and the corresponding models of the form $s_n/(1+C_nX^{-e_n})$ (in red).}
\label{fig-ratiomodels123}
\end{figure}

\begin{figure}[h!]
\includegraphics[width=6.6in]{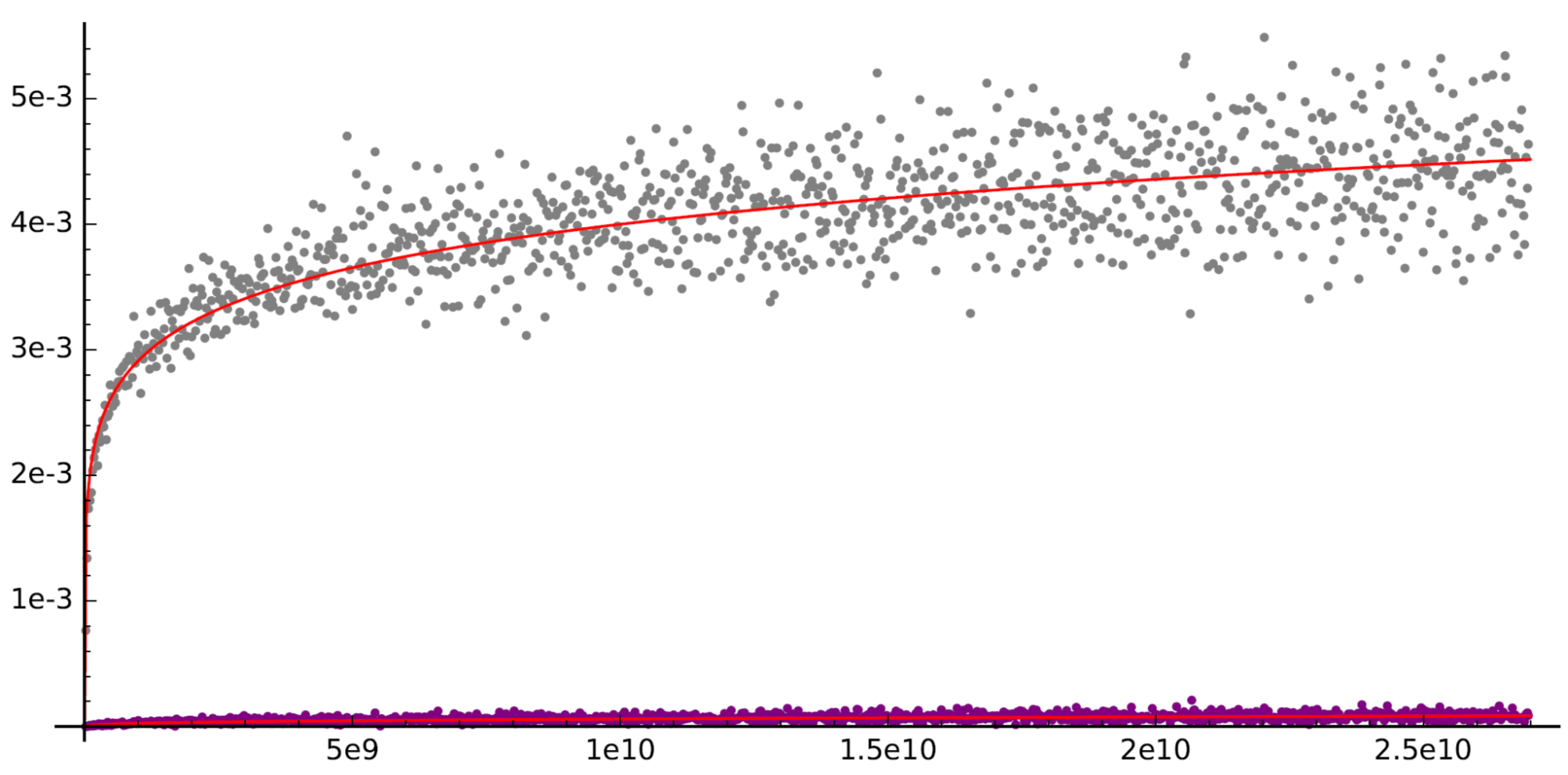}
\caption{Graphs of the moving ratios $\theta_n(X,0.025\cdot 10^9)$ for $n=4$ (gray), $5$ (purple), and the corresponding models of the form $s_n/(1+C_nX^{-e_n})$ (in red, and blue).}
\label{fig-ratiomodels45}
\end{figure}

\begin{example}\label{ex-histselmer}
By Corollary \ref{cor-selbinomial}, Hypothesis A implies that if $E_1,\ldots,E_m$ are test elliptic curves with height $\approx X$, then the number of test curves $E_i$ of Selmer rank $n$ would follow a binomial distribution $B(m,\theta_n(X))$. We have tested this against the BHKSSW database and the data and have always found the result to be in nice agreement with the predictions. For instance, let $T$ be the first $\num[group-separator={,}]{100000}$ elliptic curves with height $\geq 9\cdot 10^{9}$.  We let $m=100$, and pick $100$ curves at random in $T$, and repeat this process $10000$ times. For a fixed $n$ and for each of the $10000$ trials, the distribution of the number $0\leq t\leq 100$ of curves with Selmer rank $n$ would follow a binomial $B(100,\theta_n(X))$, where $X$ is in the interval $[9000573228,9012972924]$. We use our models $\theta_n(X)= s_n/(1+C_nX^{-e_n})$ in order to approximate values, for $n=1,\ldots,4$. We obtain:
$$\theta_1(X)\approx 0.444608,\ \theta_2(X)\approx 0.258128,\ \theta_3(X)\approx 0.055273,\ \theta_4(X)\approx 0.003948$$
for any $X$ in the given interval. If our event of picking $100$ curves follows a binomial $B(100,\theta_n(X))$, then it must be approximately a normal $N(100\cdot \theta_n(X),100\cdot \theta_n(X)\cdot (1-\theta_n(X)))$, where $100\cdot \theta_n(X)$ and $100\cdot \theta_n(X)\cdot (1-\theta_n(X))$ are, respectively, the mean and the variance of the binomial distribution. We have plotted the result of the $10000$ experiments in Figure \ref{fig-hist}, together with the normal distributions predicted by $H_A'$.
\end{example}
 
\begin{center}
\begin{figure}[h!]
\includegraphics[width=6.6in]{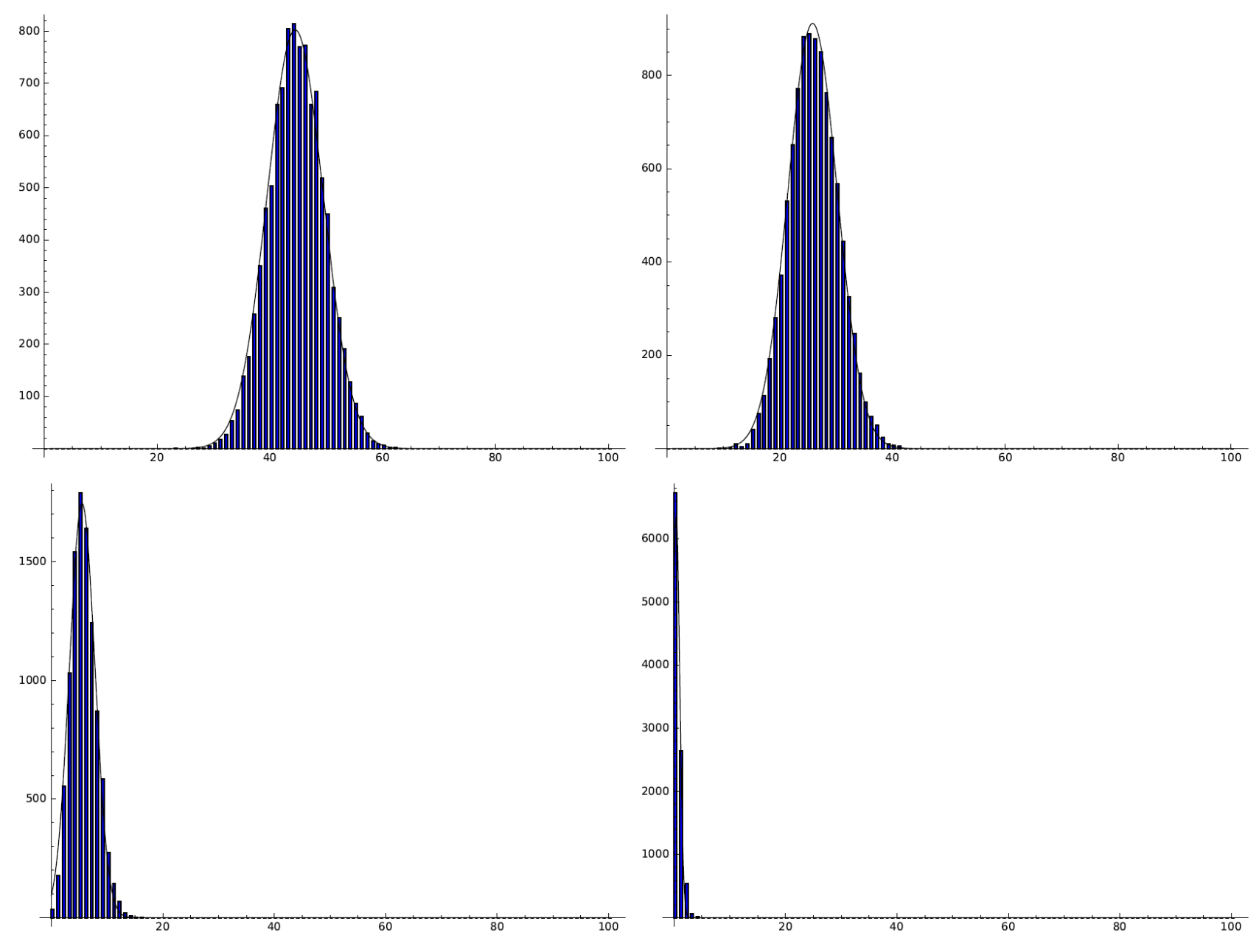}
\caption{Histogram of the distribution of $10000$ experiments of picking $100$ elliptic curves at random of height $\approx 9\cdot 10^{9}$, and counting the number of Selmer ranks equal to $n=1,2,3,4$. The graph is that of the normal distribution predicted by $H_A'$.}
\label{fig-hist}
\end{figure}
\end{center}

Now, we can put our results together to estimate the number of curves of Selmer rank $1,\ldots,5$ up to height $X$.

\begin{prop}\label{prop-numberselcurves}
If we assume $H_A$, and $\wtt\in \TT$ is arbitrary, then:
\begin{enumerate}  
\item The expected value of $\pi_{\wts_n}(X)$ is given on average by 
$$\mathbb{E}(\pi_{\wts_n}(X))\myeq \frac{5\kappa}{6}\int_1^X \frac{\theta_n(H)}{H^{1/6}}\, dH +O\left(X^{1/2}\right) ,$$
where $\kappa = 2^{4/3}\cdot(\zeta(10)\cdot 3^{3/2})^{-1}$. If in addition we assume Hypothesis \ref{conj-selmerratio} ($H_A'$) , then 
$$\mathbb{E}(\pi_{\wts_n}(X))\myeq  \frac{5 \kappa s_n}{6}\int_1^X \frac{1}{H^{1/6}(1+C_nH^{-e_n})}\, dH + O\left(X^{1/2}\right) ,$$
where the constants $s_n$, $C_n$, and $e_n$ are given in Tables \ref{tab1} and \ref{tab3} for $1\leq n\leq 5$. 
\item If we assume Hypothesis \ref{conj-selmerratio} ($H_A'$), then the expected value, on average, is   \begin{align*} \mathbb{E}(\pi_{\wts_n}((X,X+N])) &\myeq  \frac{5\kappa}{6} \frac{ N \theta_n(X)}{X^{1/6}} +O\left(\frac{N}{X^{1/2}}\right)\\
&=  \cfrac{5\kappa s_n N}{6X^{1/6}(1+C_nX^{-e_n})} +O\left(\frac{N}{X^{1/2}}\right).\end{align*}
\end{enumerate} 
\end{prop}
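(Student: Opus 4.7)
The plan is to reduce $\mathbb{E}(\pi_{\wts_n}(\cdot))$ to a sum $\sum_H \#\wtt^H \cdot \theta_n(H)$ via linearity of expectation and Hypothesis $A$, and then to convert this sum into the stated integral form using the counting estimate $\pi_{\wtt}(X) = \kappa X^{5/6} + O(X^{1/2})$ coming from $\wtt \in \mathbf{T}$.

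For part (1), I would begin by writing
$$\pi_{\wts_n}(X) = \sum_{E\in \wtt(X)} Y_{\Sel,n,\h(E)}(E),$$
so Hypothesis $A$ together with linearity of expectation gives
$$\mathbb{E}(\pi_{\wts_n}(X)) = \sum_{E\in \wtt(X)} \theta_n(\h(E)) = \sum_{H=1}^X \#\wtt^H \cdot \theta_n(H).$$
By Corollary \ref{cor-countcurvesaverage}, the defining property $\pi_{\wtt}(X) = \kappa X^{5/6} + O(X^{1/2})$ yields $\#\wtt^H \approx (5\kappa/6)H^{-1/6} + O(H^{-1/2})$ on average, and Lemma \ref{lem-inave} applied with the bounded function $a(H) = \theta_n(H)$ converts the above sum into $\tfrac{5\kappa}{6}\int_0^X \theta_n(H)/H^{1/6}\, dH$. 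The boundary-type error $\theta_n(X)\cdot O(X^{1/2})$ arises from the endpoint contribution in the Abel-summation identity
$$\sum_{H=1}^X \#\wtt^H\, \theta_n(H) = \pi_{\wtt}(X)\theta_n(X) - \sum_{H=1}^{X-1} \pi_{\wtt}(H)\bigl(\theta_n(H+1)-\theta_n(H)\bigr),$$
in which only the first summand carries the $O(X^{1/2})$ error of $\pi_{\wtt}$. Under Hypothesis $H_A'$, one then substitutes $\theta_n(H) = s_n/(1 + C_n H^{-e_n})$ directly into the integrand, giving the refined formula with the constants $s_n$, $C_n$, $e_n$ of Tables \ref{tab1} and \ref{tab3}.

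For part (2), I would apply Corollary \ref{cor-travelratio}(1) to the interval $(X, X+N]$, which gives on average
$$\mathbb{E}(\pi_{\wts_n}((X, X+N])) \approx \frac{5\kappa}{6}\int_X^{X+N} \frac{\theta_n(H)}{H^{1/6}}\, dH + O\!\left(\frac{N}{X^{1/2}}\right).$$
Under the hypothesis $X > N^2$ one has $N < X^{1/2}$, so on $[X, X+N]$ the factor $H^{-1/6}$ expands as $X^{-1/6}(1 + O(N/X))$, and the smooth function $\theta_n(H)=s_n/(1+C_n H^{-e_n})$ from $H_A'$ satisfies $\theta_n(H) = \theta_n(X) + O(N/X^{1+e_n})$; multiplying and integrating over a length-$N$ interval produces $5\kappa N \theta_n(X)/(6X^{1/6})$, with the corrections absorbed into the existing $O(N/X^{1/2})$ term. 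Substituting the explicit $H_A'$ form of $\theta_n(X)$ yields the final closed form.

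The main obstacle is the error-term bookkeeping in part (1): under Hypothesis $A$ alone, $\theta_n$ is not assumed to be of bounded variation, so the Abel-summation error $\sum_H O(H^{1/2})\bigl(\theta_n(H+1)-\theta_n(H)\bigr)$ cannot be controlled pointwise. The cleanest resolution is to invoke the ``on average'' framework of Remark \ref{rem-inave}, Lemma \ref{lem-inave}, and Corollary \ref{cor-countcurvesaverage}, which bypasses the variation issue by tracking expected growth rather than individual increments; under the refined hypothesis $H_A'$ the function $\theta_n$ is explicitly smooth and all such concerns vanish, making the remaining computations routine substitutions.
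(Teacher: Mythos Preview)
Your proposal is correct and follows essentially the same route as the paper: express $\mathbb{E}(\pi_{\wts_n}(X))$ as $\sum_H \#\wtt^H\,\theta_n(H)$ via Hypothesis~$A$ and linearity, then invoke Corollary~\ref{cor-countcurvesaverage} and Lemma~\ref{lem-inave} to pass to the integral with the stated error, and finally substitute the explicit $\theta_n$ from $H_A'$; for part~(2) the paper likewise reduces to a short-interval sum and approximates the integrand at $X$.

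One small remark: the Abel-summation paragraph is a detour the paper does not take. Your own diagnosis is right---without regularity on $\theta_n$ the variation sum $\sum_H O(H^{1/2})(\theta_n(H{+}1)-\theta_n(H))$ cannot be bounded pointwise, so Abel summation does not cleanly isolate the $\theta_n(X)\cdot O(X^{1/2})$ term. The paper sidesteps this entirely by working in the ``on average'' framework from the start (Lemma~\ref{lem-inave}(2) with $a=\theta_n$), which is exactly what you fall back to. You can simply drop the Abel-summation sentence without loss.
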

\begin{proof} 
If we assume $H_A$, and $\wtt\in\TT$ is arbitrary, then the expected value of $$\pi_{\wts_n}(X)=\sum_{H=1}^X \pi_{\wts_n}([H,H])$$ is given by $\sum_{H=1}^X \pi_{\wtt}([H,H])\cdot \theta_n(H)$. Thus, Corollaries \ref{cor-countcurvesaverage}  and \ref{cor-countcurvesaverage1}  imply that 
\begin{eqnarray*} \mathbb{E}(\pi_{\wts_n}(X)) =  \sum_{H=1}^X \pi_{\wtt}([H,H])\cdot \theta_n(H)
	&\myeq& \frac{5\kappa}{6}\int_1^X \frac{\theta_n(H)}{H^{1/6}}\, dH + O\left(X^{1/2}\right) \\
& = & \frac{5 \kappa s_n}{6}\int_1^X \frac{1}{H^{1/6}(1+C_nH^{-e_n})}\, dH + O\left(X^{1/2}\right) ,
\end{eqnarray*}
where the last line assumes Hypothesis \ref{conj-selmerratio}. For part (2), Corollary \ref{cor-countcurvesaverage1}, implies that  
\begin{eqnarray*}
\mathbb{E}(\pi_{\wts_n}((X,X+N])) &=& \sum_{H=X+1}^{X+N} \pi_{\wtt}([H,H])\cdot \theta_n(H) \myeq  \frac{5\kappa}{6} \sum_{H=X+1}^{X+N} \left(\frac{\theta_n(H)}{H^{1/6}} + O\left(H^{-1/2}\right) \right)\\
&= & \frac{5\kappa N \theta_n(X)}{6X^{1/6}} +O\left(\frac{N}{X^{1/2}}\right) =  \cfrac{5\kappa s_n N}{6X^{1/6}(1+C_nX^{-e_n})} +O\left(\frac{N}{X^{1/2}}\right),
\end{eqnarray*}
as claimed, where again, the last line assumes Hypothesis \ref{conj-selmerratio}.
\end{proof} 

We have used SageMath to do numerical integration and approximation of the expected values of $\pi_{\wts_n}(X)$ using the formula of Proposition \ref{prop-numberselcurves}, part (1), and we have plotted the graphs against actual data from the BHKSSW database in Figures \ref{fig-numberselcurves123} and \ref{fig-numberselcurves45}.

\begin{figure}[h!]
\includegraphics[width=6.6in]{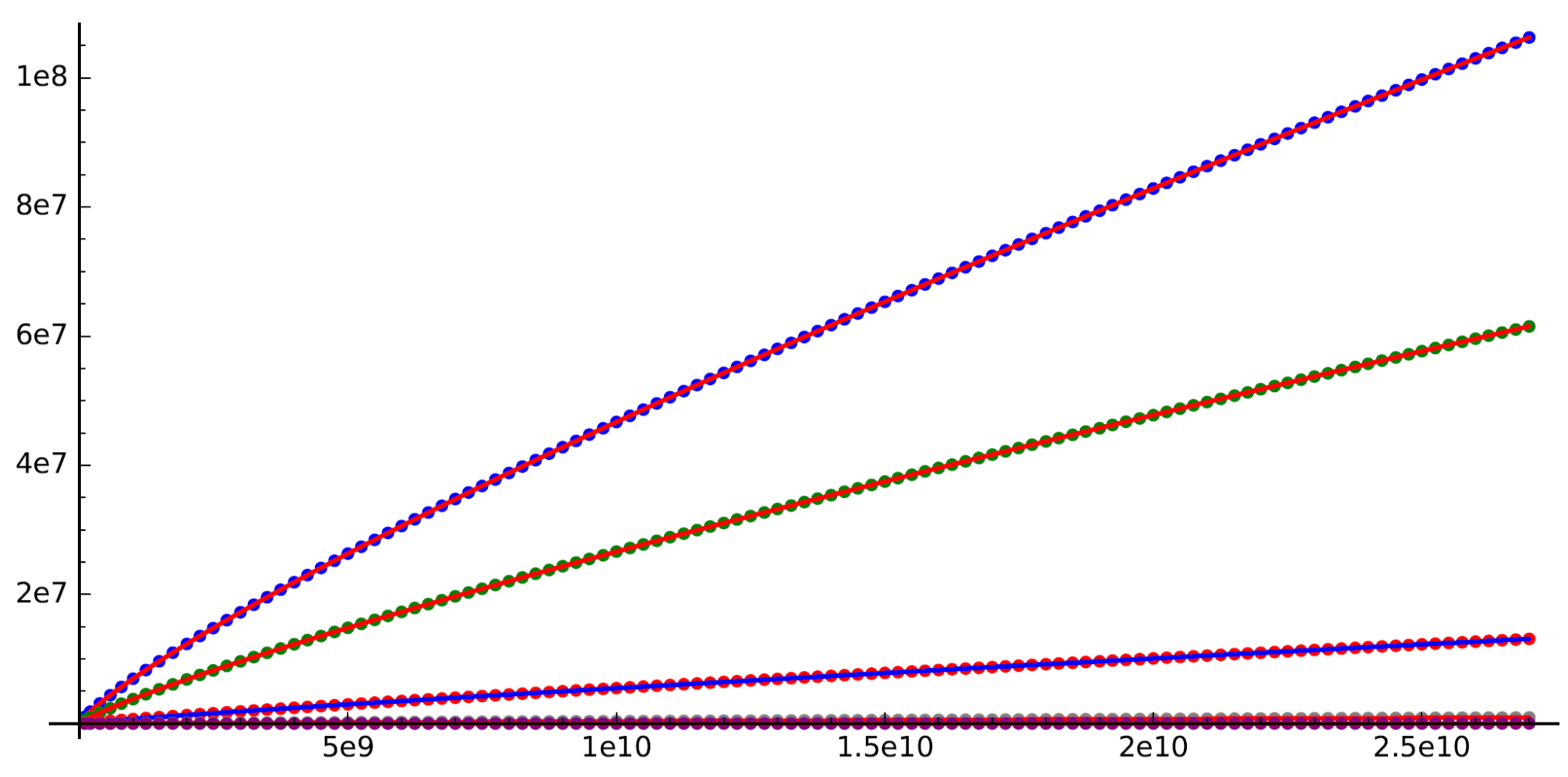}
\caption{Values of $\pi_{\s_n}(X)$ using the BHKSSW database are represented by dots for $n=1$ (blue), $2$ (green), $3$ (red), and the corresponding predictions from Proposition \ref{prop-numberselcurves} (curves in red, except for $n=3$ in blue).}
\label{fig-numberselcurves123}
\end{figure}

\begin{figure}[h!]
\includegraphics[width=6.6in]{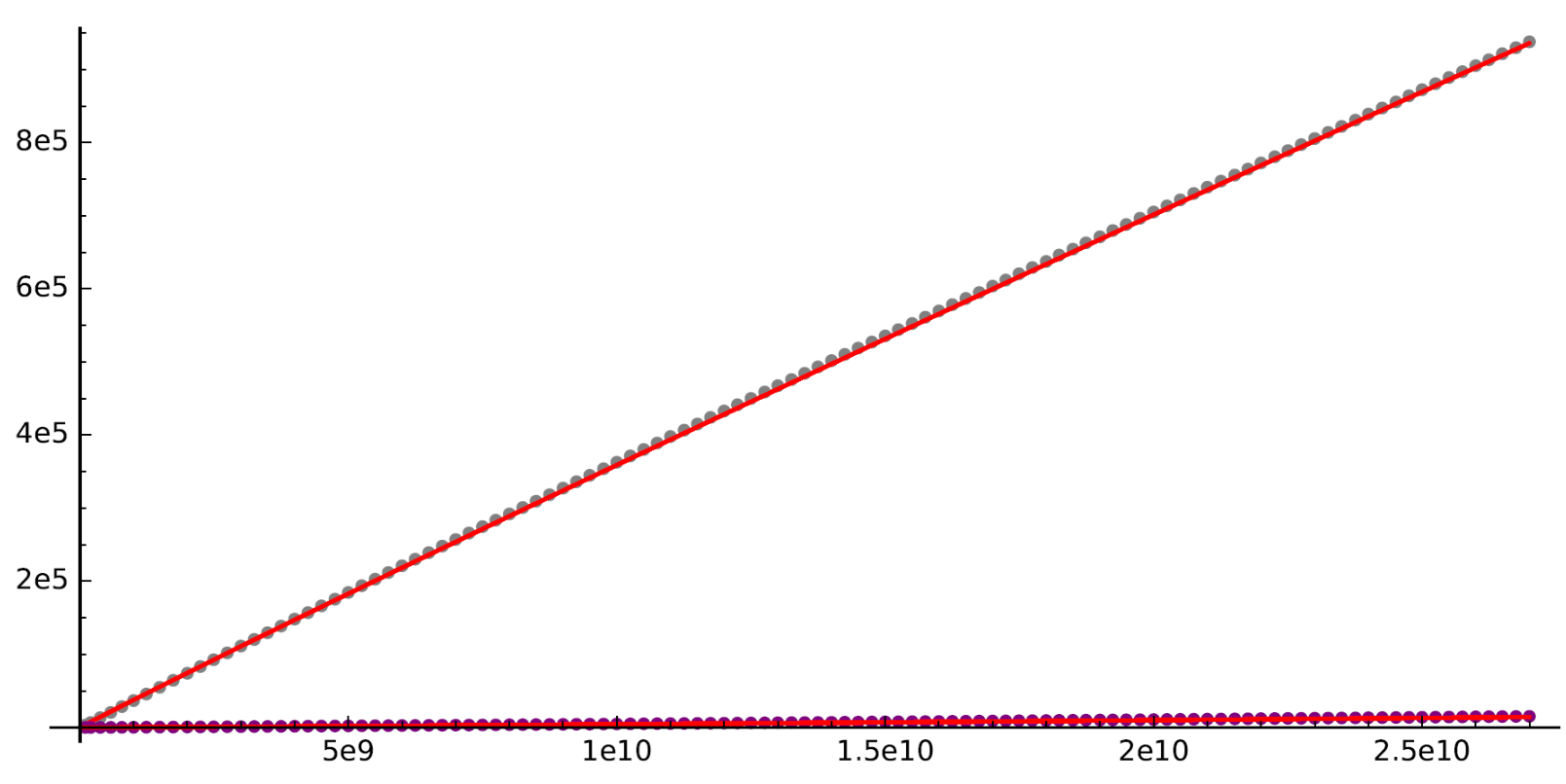}
\caption{Values of $\pi_{\s_n}(X)$ using the BHKSSW database are represented by dots for $n=4$ (gray), $5$ (purple), and the corresponding predictions from Proposition \ref{prop-numberselcurves} (curves in red).}
\label{fig-numberselcurves45}
\end{figure}

Finally, Proposition \ref{prop-numberselcurves} will allow us to write formulas for the average $2$-Selmer rank of a test elliptic curve up to height $X$. We plot our conjectural formula in Figure \ref{fig-aveselrank}.

\begin{prop}\label{prop-aveselrank}
Let $\wtt\in\TT$ be arbitrary, and let $\operatorname{AvgSelRank}_{\wtt}(X)$ be defined by
$$\operatorname{AvgSelRank}_{\wtt}(X) = \frac{1}{\pi_{\wtt}(X)}\sum_{E\in \wtt(X)} \selrank(E).$$
If we assume $H_A$ and we assume that $0\leq \theta_n(X)\leq s_n$ for all $n\geq 2$ and all $X>0$, then the expected value of the average Selmer rank is given by
$$\mathbb{E}(\operatorname{AvgSelRank}_{\wtt}(X)) \myeq  \frac{5\kappa/6}{\pi_{\wtt}(X)}\int_1^X \frac{\sum_{n\geq 1}n\cdot \theta_n(H)}{H^{1/6}}\, dH + O\left(X^{-1/3}\right).$$
Moreover, $\lim_{X\to\infty} \mathbb{E}(\operatorname{AvgSelRank}_{\wtt}(X)) = \sum_{n\geq 1} n s_n =  1.26449978\ldots$. 
\end{prop}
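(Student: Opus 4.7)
The plan is to decompose the sum defining $\operatorname{AvgSelRank}_{\wtt}(X)$ by grouping test elliptic curves according to their Selmer rank:
$$\sum_{E\in \wtt(X)} \selrank(E) = \sum_{n\geq 1} n\cdot \pi_{\wts_n}(X),$$
where the sum on the right is effectively finite for each fixed $X$, since $\wtt(X)$ is a finite set and each test elliptic curve has a unique Selmer rank. Taking expectations and applying linearity together with Proposition \ref{prop-numberselcurves}(1), I obtain
$$\mathbb{E}(\operatorname{AvgSelRank}_{\wtt}(X)) = \frac{1}{\pi_{\wtt}(X)}\sum_{n\geq 1} n\left(\frac{5\kappa}{6}\int_0^X \frac{\theta_n(H)}{H^{1/6}}\,dH + \theta_n(X)\cdot O(X^{1/2})\right).$$

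Next, I would interchange the sum and the integral in the main term (justified by the absolute convergence bound discussed below), and use Theorem \ref{thm-brumer} to write $\pi_{\wtt}(X)^{-1} = (\kappa X^{5/6})^{-1}(1+O(X^{-1/3}))$. This yields
$$\mathbb{E}(\operatorname{AvgSelRank}_{\wtt}(X)) = \frac{5/6}{X^{5/6}}\int_0^X \frac{\sum_{n\geq 1} n\theta_n(H)}{H^{1/6}}\,dH \cdot (1+O(X^{-1/3})) + \frac{O(X^{1/2})\sum_{n\geq 1} n\theta_n(X)}{\kappa X^{5/6}}.$$
The crucial uniform bound is that $\sum_{n\geq 1} n\theta_n(H)$ is bounded independently of $H$. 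This follows from the hypothesis $0\leq \theta_n(H)\leq s_n$ for $n\geq 2$, together with the trivial bound $\theta_1(H)\leq 1$, since the series $\sum_{n\geq 1} n s_n$ converges to $1.26449978\ldots$ by the Poonen--Rains formula $s_n = (\prod_{j\geq 0}(1+2^{-j})^{-1})\prod_{k=1}^n 2/(2^k-1)$ (the factor $\prod 2/(2^k-1)$ decays super-exponentially in $n$). With this bound, the error term is $O(X^{-1/3})$, and since the integral is $O(X^{5/6})$, the $(1+O(X^{-1/3}))$ correction on the main term contributes another $O(X^{-1/3})$, which matches the claimed error.

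For the limit statement, I would first show by dominated convergence (with dominating summable majorant $n s_n$, using again $\theta_n\leq s_n$ for $n\geq 2$, and noting that $\theta_n(H)\to s_n$ pointwise by Hypothesis $H_A$) that $\sum_{n\geq 1} n\theta_n(H)\to \sum_{n\geq 1} n s_n$ as $H\to\infty$. Then, writing $g(H) = \sum_{n\geq 1} n\theta_n(H)$, L'H\^opital's rule gives
$$\lim_{X\to\infty} \frac{5/6}{X^{5/6}}\int_0^X \frac{g(H)}{H^{1/6}}\,dH = \lim_{X\to \infty}\frac{(5/6)\cdot g(X) X^{-1/6}}{(5/6) X^{-1/6}} = \sum_{n\geq 1}n s_n,$$
and combined with the error term $O(X^{-1/3})$ this proves the final assertion.

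The main obstacle is the uniform control required to interchange the infinite sum with the expectation and the integral, and to apply dominated convergence in the limit step. The hypothesis $\theta_n\leq s_n$ for $n\geq 2$ is precisely designed to supply the summable majorant, after which all exchanges and the L'H\^opital step become routine.
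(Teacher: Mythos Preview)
Your proposal is correct and follows essentially the same approach as the paper: decompose by Selmer rank, apply Proposition~\ref{prop-numberselcurves}, use the hypothesis $\theta_n\leq s_n$ to get a summable majorant, and then pass to the limit. The only cosmetic differences are that you invoke L'H\^opital where the paper argues directly that $X^{-5/6}\int_0^X f(H)H^{-1/6}\,dH\to 0$ when $f\to 0$, and that you should cite Definition~\ref{defn-TT} rather than Theorem~\ref{thm-brumer} for the asymptotic $\pi_{\wtt}(X)=\kappa X^{5/6}+O(X^{1/2})$, since $\wtt$ is an arbitrary element of $\mathbf{T}$.
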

\begin{proof}
In order to compute the average Selmer rank, we note that
$$\operatorname{AvgSelRank}_{\wtt}(X) = \frac{1}{\pi_{\wtt}(X)}\sum_{E\in \wtt(X)} \selrank(E)=\frac{1}{\pi_{\wtt}(X)}\sum_{n\geq 1} \sum_{E\in\wts_n(X)} n = \frac{1}{\pi_{\wtt}(X)}\sum_{n\geq 1} n\cdot \pi_{\wts_n}(X).$$
Thus, by Prop. \ref{prop-numberselcurves} we have that the expected value of $\sum_{n\geq 1} n\cdot \pi_{\wts_n}(X)$ is given by
\begin{align*} 
\mathbb{E}\left(\sum_{n\geq 1} n\cdot \pi_{\wts_n}(X)\right) &\myeq  \sum_{n\geq 1}  n\cdot \left( \frac{5\kappa}{6}\int_1^X \frac{\theta_n(H)}{H^{1/6}}\, dH +O\left(X^{1/2}\right) \right)\\
 &=\frac{5\kappa}{6}\int_1^X \frac{\sum_{n\geq 1}n\cdot \theta_n(H)}{H^{1/6}}\, dH + \sum_{n\geq 1}n\cdot  O\left(X^{1/2}\right).
\end{align*} 
We need further information on the error term $\sum_{n\geq 1} n\cdot O(X^{1/2})$. Recall that in Corollary \ref{cor-countcurvesaverage1} we showed that, for each $n\geq 1$,  the error term is bounded by $\frac{6\kappa \cdot C_1 \cdot  \theta^{\text{sup}} \cdot e}{X^{1/2}}$. Here, for $n\geq 1$, we have $\theta^{\text{sup}}=1$ for $n=1$ and $s_n$ for $n\geq 2$, and $e=e_n$, and $C_1$ and $\kappa$ are constants that do not depend on $n$. Thus, the total error term is bounded by
$$\frac{6\kappa C_1 e_1}{X^{1/2}} + \frac{6\kappa C_1}{X^{1/2}}\cdot \sum_{n\geq 2} ns_n e_n.$$
By Lemma \ref{lem-seriesconverge}, the series $\sum_{n\geq 1} ns_n e_n$ converges. Thus, there is a constant $C_2>0$ such that the total error term is bounded by $C/X^{1/2}$. It follows that 
\begin{align*} 
\mathbb{E}(\operatorname{AvgSelRank}_{\wtt}(X)) &\myeq \frac{(5\kappa)/6}{\pi_{\wtt}(X)}\int_1^X \frac{\sum_{n\geq 1}n\cdot \theta_n(H)}{H^{1/6}}\, dH + O\left(X^{-1/3}\right),
\end{align*} 
by Lemma \ref{lem-inaveweights}, where we have used that $\pi_{\wtt}(X) = O(X^{5/6})$. This proves the main statement of the result.

For the second statement, recall that $\sum_{n\geq 1} ns_n$ converges. Since we are assuming $0\leq \theta_n(X)\leq s_n$ for $n\geq 2$, it follows that $\sum_{n\geq 1} n\theta_n(X)$ converges for any $X$, and $\lim_{X\to \infty} \sum_{n\geq 1} n\theta_n(X) = \sum_{n\geq 1} ns_n = 1.26449978\ldots.$ Next, we calculate the limit of $\mathbb{E}(\operatorname{AvgSelRank}_{\wtt}(X))$ as $X\to \infty$. Let $\alpha = \sum_{n\geq 1} ns_n$. Then,\small 
\begin{eqnarray*} \mathbb{E}(\operatorname{AvgSelRank}_{\wtt}(X)) &\myeq& \frac{5/6}{\pi_{\wtt}(X)}\int_1^X \frac{\sum_{n\geq 1}n\cdot \theta_n(H)}{H^{1/6}}\, dH + O\left(X^{-1/3}\right)\\
&=& \frac{5/6}{\pi_{\wtt}(X)}\left(\int_1^X \frac{\left(\sum_{n\geq 1}n\cdot \theta_n(H) - \alpha\right)}{H^{1/6}}\, dH +\int_1^X \frac{ \alpha}{H^{1/6}}\, dH \right)+ O\left(X^{-1/3}\right).
\end{eqnarray*}
\normalsize
Now, since $f(X)=\sum_{n\geq 1}n\cdot \theta_n(X) - \alpha$ goes to $0$ as $X\to\infty$, it follows that $X^{-5/6}\int_1^X f(H)H^{-1/6}\, dH$ also vanishes in the limit. Hence,
\begin{eqnarray*}\lim_{X\to\infty}\mathbb{E}(\operatorname{AvgSelRank}_{\wtt}(X)) &\myeq& \lim_{X\to \infty}\left( \frac{5/6}{\pi_{\wtt}(X)}\int_1^X \frac{\sum_{n\geq 1}n\cdot \theta_n(H)}{H^{1/6}}\, dH + O\left(X^{-1/3}\right)\right)\\
&=&\lim_{X\to\infty} \frac{5/6}{\pi_{\wtt}(X)}\int_1^X \frac{ \alpha}{H^{1/6}}\, dH  = \alpha =\sum_{n\geq 1} ns_n,\end{eqnarray*} 
as we wanted to prove.
\end{proof}

\begin{figure}[h!]
\includegraphics[width=6.6in]{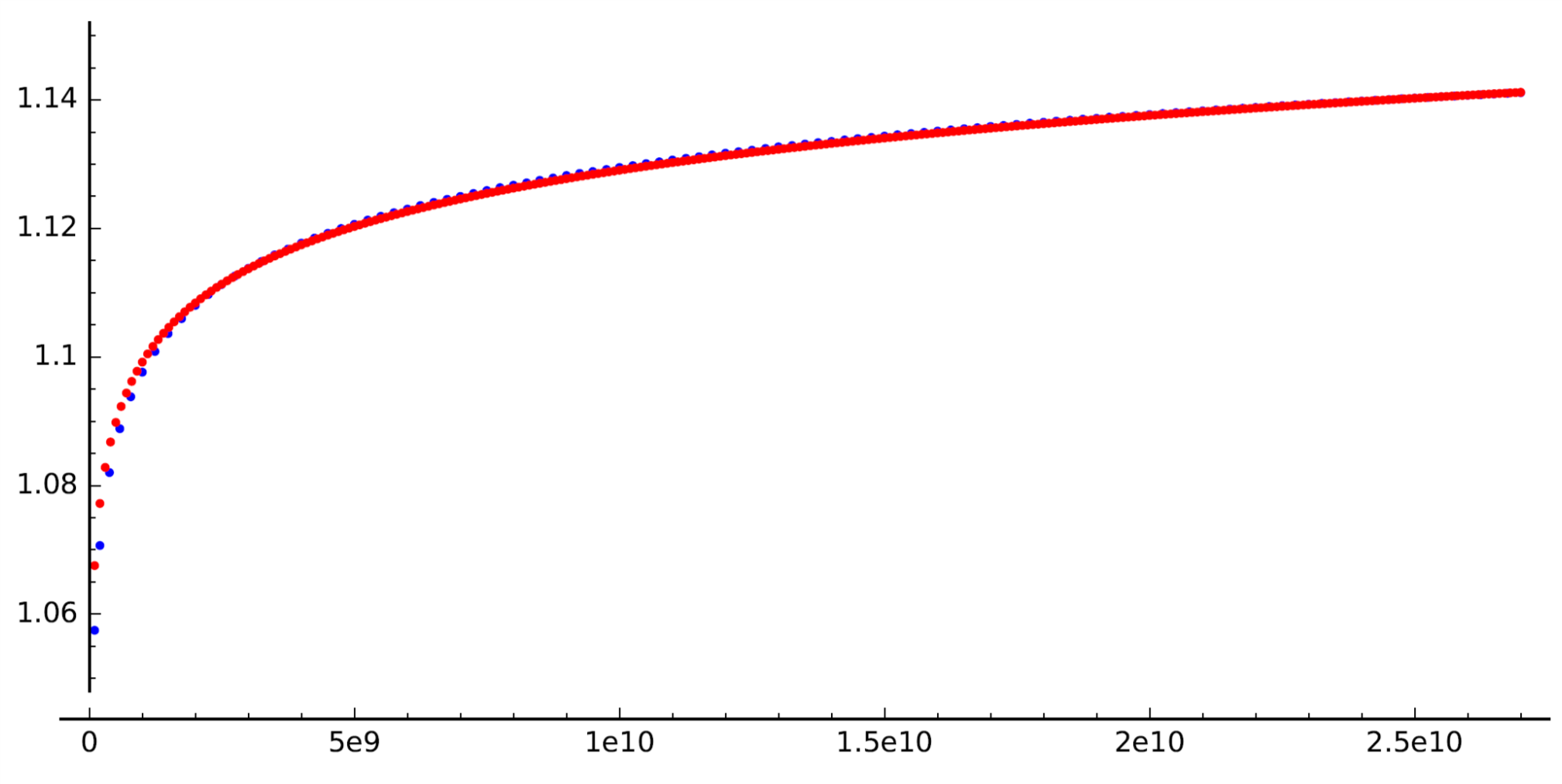}
\caption{Values of $\operatorname{AvgSelRank}(X)$ using the BHKSSW database (blue dots), and the corresponding predictions from Proposition \ref{prop-aveselrank} (red dots).}
\label{fig-aveselrank}
\end{figure}

\section{Random model, following Cram\'er, part 2} \label{sec-cramermodel2}

In this section, we begin with a discussion of how to study the success/failure of the Hasse principle in Selmer elements, we define a probability function $\operatorname{pHasse}_{n}(X)$, and then later we use the discussion of this probability to construct our formal probability space on test elliptic curves in Hypothesis \ref{hypb}.

Let $E/\Q$ be an elliptic curve, and let $\Sel_2(E/\Q)$ and $\Sh(E/\Q)[2]$ be, respectively, the $2$-Selmer group of $E/\Q$ and the $2$-torsion of Sha. We would like to understand how often an element of $\Sel_2(E/\Q)$ is in the image of $E(\Q)/2E(\Q)/(E(\Q)_\text{tors}/2E(\Q)_\text{tors})$ under the natural injection $\delta_E$ of the short sequence in Eq. (\ref{eq-shortseq}) already mentioned in Section \ref{sec-selmer-ratio}. Equivalently, we would like to know when an element of $\Sel_2(E/\Q)$ reduces to a non-trivial element in the quotient  $\Sel_2(E/\Q)/(E(\Q)/2E(\Q))\cong \Sh(E/\Q)[2]$. Inspired by the Cohen-Lenstra heuristics for number fields, Delaunay (\cite{del1}, \cite{del2}) has conjectured certain distributions of Tate-Shafarevich groups (see also Section 5 of \cite{ppvm} for a rich account of Delaunay's conjectures and other related works). As in the case of the results on the density of Selmer ranks discussed in Section \ref{sec-selmer-ratio}, Delaunay's heuristics provide the (conjectural) limit value of the density (i.e., the total probability) of curves with a certain structure of $\Sh(E/\Q)$. However, for our purposes, we are interested in the average size of $\Sh(E/\Q)[2]$ at height $X$, for a curve of fixed Selmer rank $n$. In other words, we are interested in the following conditional probability that measures the failure of the Hasse principle at a given $2$-Selmer element of height $X$:
$${\operatorname{pHasse}}_{n}(X)=\text{Prob}(s\in \Sel_2(E/\Q) \text{ is trivial in } \Sh(E/\Q)[2]\ |\ E\in \s_n \text{ and } \h(E/\Q)=X).$$
An element $s\in \Sel_2(E/\Q)$, in turn, can be visualized as a homogeneous space $H\in \operatorname{WC}(E/\Q)$ in the Weil-Ch\^atelet group of $E/\Q$, such that $H$ is locally solvable everywhere, and the quantity ${\operatorname{pHasse}}_{n}(X)$ would be realized as the probability of $H(\Q)$ having a rational point (see \cite{cremonamazur}).

At this point, we could measure $\text{pHasse}_n(E/\Q)$, the average failure of the Hasse principle for the  $2$-Selmer elements coming from a fixed elliptic curve $E$ of height $X$, as usual, by 
$$\frac{1}{\# \Sh(E/\Q)[2]}= \frac{\#(E(\Q)/2E(\Q))}{\#\Sel_2(E/\Q)} = \frac{1}{2^{\selrank(E(\Q))-\rank(E(\Q))}}.$$ 
However, this ratio does not capture correctly the {\it probability} that a $2$-Selmer element is trivial in $\Sh$. Indeed, it is important to note that if $s,s'\in \Sel_2(E/\Q)$ are two distinct  elements, then the events $s\equiv 0 \in \Sh(E/\Q)$ and $s'\equiv 0 \in \Sh(E/\Q)$ are in general {\it not} independent from a probabilistic point of view. Indeed, if the $2$-Selmer rank of $E/\Q$ is $n$, then $\Sel_2(E/\Q)$ (modulo $2$-torsion contributions) has order $2^n$, but the size of $\Sh(E/\Q)[2]$ is dictated by the classes of $n$ generators $s_1,\ldots,s_n$ of $\Sel_2(E/\Q)/(\text{$2$-torsion})$. Thus, a better measure for ${\operatorname{pHasse}}_{n}(X)$ may be
$$\frac{\rank_{\Z/2\Z}(E(\Q)/2E(\Q))-\rank_{\Z/2\Z}E(\Q)[2]}{\rank_{\Z/2\Z}(\Sel_2(E/\Q))-\rank_{\Z/2\Z}E(\Q)[2]}=\frac{\rank(E(\Q))}{\selrank(E(\Q))}.$$
As it turns out, this ratio is not the correct measure either for odd Selmer rank. If we assume that $\Sh(E/\Q)[2^\infty]$ is finite, then the existence of the Cassels-Tate pairing (\cite{cassels}) $$\Gamma\colon\Sh(E/\Q)[2^\infty]\times \Sh(E/\Q)[2^\infty]\to \Q/\Z,$$
which is a non-degenerate, alternating, and bilinear, implies that the $\F_2$-dimension of $\Sh(E/\Q)[2]$ is always even. It follows that $\rank(E(\Q))\equiv \selrank(E(\Q)) \bmod 2$. In particular, if $\selrank(E(\Q))=n=2k$ or $1+2k$, then the $\F_2$-dimension of $\Sh(E/\Q)[2]$ is in fact dictated by $2k$ classes of $\Sel_2(E/\Q)$ (if $n=1$, then $k=0$, so we will assume that $n\geq 2$ from now on in this section). Therefore, the correct way to define the failure of the Hasse principle for a given elliptic curve is as follows.

\begin{defn}
Let $E/\Q$ be an elliptic curve of Selmer rank $n\geq 2$. We define the average ratio of failure of the Hasse principle of the $2$-Selmer elements (modulo Mordell--Weil $2$-torsion) of $E/\Q$ by
$$\operatorname{pHasse}_n(E/\Q) = \begin{cases}
\cfrac{\rank(E(\Q))}{\selrank(E(\Q))} & \text{\normalsize if $n$ is even, and}\\
\\
\cfrac{\rank(E(\Q))-1}{\selrank(E(\Q))-1} & \text{\normalsize if $n$ is odd.}
\end{cases}$$
\normalsize 
\end{defn}
In other words, $\operatorname{pHasse}_n(E/\Q)$ is the probability that a generator of $\Sel_2(E/\Q)/(\text{2-torsion})$, from any given set of generators, has a non-trivial image in $\Sh(E/\Q)[2]$. We note here that, in all cases, we have $$\operatorname{pHasse}_n(E/\Q)=\cfrac{\rank(E(\Q))-(n \bmod 2)}{n - (n\bmod 2)}.$$

\begin{remark}\label{rem-half}
The fact that the $\F_2$-dimension of $\Sh(E/\Q)[2]$ is even implies that $n=\selrank(E(\Q))$ and $\rank(E(\Q))$ have the same parity. Thus, the rank of $E(\Q)$ is determined by $\lfloor n/2\rfloor$ pairs of generators $\{(s_1,\hat{s}_1),(s_2,\hat{s}_2),\ldots,(s_{\lfloor n/2\rfloor},\hat{s}_{\lfloor n/2\rfloor})\}$ of $\Sel_2(E/\Q)$ such that $s_i\equiv 0 \in \Sh$ if and only if $\hat{s}_i\equiv 0 \in \Sh$. Indeed, let us assume that $n\geq 2$, and first assume that $n$ is even, $n=2k$. Let $E/\Q$ be an elliptic curve of Selmer rank $n$, and let $s_1$ be an arbitrary element of $\Sel_2(E/\Q)/(\text{$2$-torsion})$. We distinguish two cases:
\begin{itemize}
\item If $s_1\equiv 0 \in \Sh$, then $\dim_{\F_2}(\Sh(E/\Q)[2])$ is now at most $2k-2$, and this means that there exists a Selmer element $\hat{s}_1$, linearly independent from $s_1$, such that $\hat{s}_1\equiv 0\in \Sh$ as well. 

\item Otherwise, if $s_1$ represents a non-trivial element in $\Sh$, and if $\Gamma: \Sh(E/\Q)[2]\times \Sh(E/\Q)[2]\to \F_2$ is the Cassels-Tate (non-degenerate, alternating, and bilinear) pairing, than we can choose a non-trivial element $[\hat{s}_1]\in \Sh(E/\Q)[2]$ such that $\Gamma([s_1],[\hat{s}_1])=1$. In particular, $[\hat{s}_1]$ is linearly independent of the class of $s_1$ in $\Sh$, and therefore if $\hat{s}_1$ is now any Selmer element representing the same class $[\hat{s}_1]$ of $\Sh$, then $\hat{s}_1$ and $s_1$ are also linearly independent in $\Sel_2$.
\end{itemize}
In either case, we have found a pair $(s_1,\hat{s}_1)$ of Selmer elements such that $s_i\equiv 0 \in \Sh$ if and only if $\hat{s}_i\equiv 0 \in \Sh$. We can continue this process to find pairs $(s_1,\hat{s}_1),\ldots,(s_k,\hat{s}_k)$. Now let $n=1+2k$ be odd. The proof is analogous, except that if $\dim_{\F_2} \Sel_2(E/\Q)$ is odd, then $\dim_{\F_2}(\Sh(E/\Q)[2])$ must be even, and so there is automatically a Selmer element $s_0$ that is trivial in $\Sh$. Now we can proceed as above to find pairs $(s_1,\hat{s}_1),\ldots,(s_k,\hat{s}_k)$ such that $s_i\equiv 0 \in \Sh$ if and only if $\hat{s}_i\equiv 0 \in \Sh$.

Thus, it may be best to define  $$\operatorname{pHasse}_n(E/\Q)= \frac{ \frac{1}{2}(\rank(E(\Q)) - (n\bmod 2))}{ \frac{1}{2}(\selrank(E(\Q))-(n\bmod 2))}$$
but, of course, the factors of $\frac{1}{2}$ cancel out and this definition is equivalent to the one given above. This simple remark will be crucial when computing the probability of a given Mordell--Weil rank $r$ among curves of Selmer rank $n$ in Theorem \ref{thm-ranksbinomial}.
\end{remark}
 
\begin{remark}
	Let $E=(X,n,\Sel_2)$ be a test elliptic curve, as in Definition \ref{defn-testellipticcurve}. The same considerations stated in this section about the parity of $n$ explain our reasons to define $\Sel_2$ as a vector of $\lfloor\frac{n}{2}\rfloor =(n-(n\bmod 2))/2$ symbols $s_{E,1},\ldots,s_{E,\lfloor\frac{n}{2}\rfloor}$ in $\{\Sh,\operatorname{MW}\}$.
\end{remark} 
 
Now, we turn our attention back to test elliptic curves and our Cram\'er-like model. First, we define the (MW) rank of a test elliptic curve.

\begin{defn}\label{defn-rank} We define the rank (or MW rank) of a test elliptic curve $E=(X,n,\Sel_2)$, as follows:
	$$\rank(E) = (n\bmod 2) + 2\cdot \# \{ \operatorname{MW} \text{ elements in } \Sel_2(E) \}.$$
\end{defn} 

\begin{remark}\label{rem-rank1}
	If $n=1$ and $E=(X,1,\Sel_2)$ is a test curve in $\wss_1^X$, then $\Sel_2(E)$ is empty, and $\rank(E)=(n\bmod 2)=1$, so we will concentrate on the case of $n\geq 2$ from now on.
\end{remark}

\begin{example}
	Let $E$ and $E'$ be the test elliptic curves given by $$E=(107245762628,4,(\text{MW},\Sh )), \text{ and } E'=(18932679356,3,(\Sh ))$$ that appeared in Example \ref{ex-testellipticcurves}. Then,
	$$\rank(E)=(4\bmod 2) + 2\cdot 1 = 2, \text{ and } \rank(E')=(3\bmod 2)+2\cdot 0=1.$$
\end{example}

We are ready to translate our remarks above into a hypothesis for a probabilistic model of test Selmer elements, and define probability spaces $\left(\Sel_{2,n}^X,\mathcal{F}_n^X,P_n^X\right)$ and $\left(\Sel_{2,n}^{\overline{X}},\mathcal{F}_n^{\overline{X}},P_n^{\overline{X}}\right)$ as follows.

\begin{hypb}[Hypothesis B, or $H_B$] \label{hypb}
Let $n\geq 2$ be fixed, let $X\geq 1$, and define 
$$\Sel_{2,n}^{X}=\bigcup_{E\in \wss_n^X} \{s_{E,i} : 1\leq i \leq \left\lfloor\frac{n}{2}\right\rfloor\}=\bigcup_{E\in \wss_n^X} \{s_{E,1},\ldots,s_{E,\lfloor n/2\rfloor}\}$$
where the union is over test elliptic curves $E=(X,n,\Sel_2(E))$ of fixed height $X$ and fixed Selmer rank $n$, and $\Sel_2=(s_{E,1},\ldots,s_{E,\lfloor n/2\rfloor})$. 
\begin{enumerate}
\item[(a)]
 Let $\rho_n(X)$ be a function $[1,\infty)\to (0,1)$ such that $\lim_{X\to\infty} \rho_n(X)=0$, with $\rho_n(X) = O(X^{-f_n})$, for some $f_n>0$.  We define a probability space $\left(\Sel_{2,n}^X,\mathcal{F}_n^X,P_n^X\right)$ by defining a probability measure $P_n^X$ as follows:
\begin{itemize}
	\item $\MW_n^X$ is the subset of $\Sel_{2,n}^X$ of $\MW$ symbols, and $\Sh_n^X=\Sel_{2,n}^X\setminus \MW_n^X$ is the subset of $\Sh$ symbols.
	\item $\mathcal{F}_n^X=\left\{\emptyset,\MW_n^X,\Sh_n^X, \Sel_{2,n}^X \right \}$.
	\item $P_n^X\left(\MW_n^X\right) = \rho_n(X)$, and $P_n^X\left(\Sh_n^X\right)=1-\rho_n(X)$.
\end{itemize}
In other words, $\mathcal{F}_n^X$ and $P_n^X$ are chosen so that the random variable $Y_{\operatorname{Hasse},n,X}\colon\Sel_{2,n}^X\to \{0,1\}$ that takes values
	$$Y_{\operatorname{Hasse},n,X}(s)=\begin{cases}
	1 & \text{ if } s\in\MW_n^X,\\
	0 & \text{ if } s\in\Sh_n^X.
	\end{cases}$$
	is $\mathcal{F}_n^X$-measurable and $Y_{\operatorname{Hasse},n,X}$ follows a Bernoulli distribution with probability $\rho_n(X)$.

\item[(b)] 
	If $m\geq 1$ is fixed, and $\overline{X}=(X_1,\ldots,X_m)$ is a vector of arbitrary heights $X_i\geq 1$, we define $\Sel_{2,n}({\overline{X}})=\prod_{i=1}^m \{\Sel_2(E) : E \in \wss_{n}^{X_i} \}$ as the set of $m\times \lfloor n/2\rfloor$ matrices with coefficients in $\Sel_{2,n} = \bigcup_{X\geq 1} \Sel_{2,n}^X$, such that the $i$-th row is a vector $\Sel_2(E)$ in for some $E \in \wss_{n}^{X_i} $. In addition, for $1\leq i\leq m$ and $1\leq j \leq \lfloor n/2 \rfloor$, we define random variables $Y_{\overline{X},i,j}\colon \Sel_{2,n}({\overline{X}}) \to \{0,1\}$ such that if $M=(\Sel_{2}(E_i))_{i=1}^m \in \Sel_{2,n}({\overline{X}})$, then
	$$Y_{\overline{X},i,j}(M)=\begin{cases}
	1 & \text{ if } s_{E_i,j}\in\MW_n^X,\\
	0 & \text{ if } s_{E_i,j}\in\Sh_n^X.
	\end{cases}$$ 
	Then, we define a probability space $(\Sel_{2,n}(\overline{X}),\mathcal{F}_n^{\overline{X}},P_n^{\overline{X}})$ so that, for each $i,j$, the random variable $Y_{\overline{X},i,j}$ is $\mathcal{F}_n^{\overline{X}}$-measurable, and follows a Bernoulli distribution with probability $\rho_n(X_i)$. Moreover:
	\begin{enumerate}
		\item[(b.1)] If $i_1\neq i_2$, and $1\leq j_1,j_2\leq \lfloor n/2\rfloor$, then the variables  $Y_{\overline{X},i_1,j_1}$ and $Y_{\overline{X},i_2,j_2}$ are independent and uncorrelated. 
		\item[(b.2)] Let $i$ be fixed. Let $1\leq k\leq \lfloor n/2\rfloor$, and $1\leq j_1<j_2<\ldots<j_k\leq \lfloor n/2 \rfloor$. The variables $\{Y_j=Y_{\overline{X},i,j}: 1\leq j \leq \lfloor n/2\rfloor \}$ are not necessarily mutually independent but the expected value $\mathbb{E}\left(Y_{j_1}Y_{j_2}\cdots Y_{j_k} \right)$
		only depends on $n$, $k$, and $X_i$, and it is independent of the choice of indices $j_1,\ldots,j_k$.  
		\end{enumerate} 
\end{enumerate}
\end{hypb}

\begin{remark}
	A few remarks are in order about Hypothesis \ref{hypb}. 
	\begin{enumerate}
		\item In order to minimize baroque symbols, we have used the same notation for the $\sigma$-algebra and probability measure both in Hypotheses \ref{hypa} and \ref{hypb}. The context will hopefully make clear what probability space we are working with.
		
		\item Hypothesis \ref{hypb} assumes that $\lim_{X\to \infty} \rho_n(X)=0$ and $\rho_n(X)=O(X^{-f_n})$. We assume so because of the empirical data we discuss below in Section \ref{sec-refineb}. The author is working on a follow up paper that would perhaps provide a more theoretical reason to assume such growth (decay) for $\rho_n(X)$.
	\end{enumerate}
\end{remark}

\begin{defn} We say that the random variables $Y_1,\ldots, Y_m$ are {\it equicorrelated} if $$\mathbb{E}(Y_{i_1}Y_{i_1}\cdots Y_{i_k})$$
	only depends on $n$, $k$, and $X$, and it is independent of the choice of indices $1\leq i_1<\ldots<i_k\leq m$. 
\end{defn}

\begin{remark} Let $n\geq 2$, let $m\geq 1$, let $1\leq i \leq m$ be fixed, let $X$ be a height, and let $\overline{X}=(X)$. Let $Y_j=Y_{\overline{X},i,j}$ as in Hypothesis \ref{hypb}, part (b.2). Then, the equicorrelation condition of $H_B$, part (2), does not add any conditions at all when $n=2,3$. When $n=4,5$, equicorrelation simply says that $\mathbb{E}(Y_1 )=\mathbb{E}(Y_2)$. This is already implied by the assumption that $Y_1$ and $Y_2$ follow the same Bernoulli distribution (so in fact $\mathbb{E}(Y_1)=\mathbb{E}(Y_2)=\rho_n(X)$). However, the equicorrelation does add new information about the random variables $\{Y_i\}$ for $n\geq 6$. For instance, when $n=6$, it says that 
$$\mathbb{E}(Y_1Y_2)=\mathbb{E}(Y_1Y_3)=\mathbb{E}(Y_2Y_3).$$ 
 When $n=8$, it says that
$$\mathbb{E}(Y_1Y_2)=\mathbb{E}(Y_1Y_3)=\mathbb{E}(Y_1Y_4)=\mathbb{E}(Y_2Y_3)=\mathbb{E}(Y_2Y_4)=\mathbb{E}(Y_3Y_4),$$
and also
$$\mathbb{E}(Y_1Y_2Y_3)=\mathbb{E}(Y_1Y_2Y_4)=\mathbb{E}(Y_1Y_3Y_4)=\mathbb{E}(Y_2Y_3Y_4).$$
\end{remark}

\section{The probability that a $2$-Selmer element is globally solvable}\label{sec-hasse-ratio}

The following two results describe the effects of equicorrelation on the covariance of the random variables. We remind the reader that the covariance of two random variables $V,W$ is given by 
$\Cov(V,W)=\mathbb{E}(VW)-\mathbb{E}(V)\cdot \mathbb{E}(W).$

\begin{lemma}\label{lem-equicorr}
Let $Z,Z',W,W'$ be random variables such that $\mathbb{E}(Z)=\mathbb{E}(Z')$, $\mathbb{E}(W)=\mathbb{E}(W')$.  Then, $\Cov(Z,W)=\Cov(Z',W')$ if and only if $\mathbb{E}(ZW)=\mathbb{E}(Z'W')$, if and only if $\mathbb{E}((1-Z)W)=\mathbb{E}((1-Z')W')$.
\end{lemma}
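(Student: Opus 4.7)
My plan is to observe that both equivalences follow immediately from the linearity of expectation together with the two hypotheses $\mathbb{E}(Z)=\mathbb{E}(Z')$ and $\mathbb{E}(W)=\mathbb{E}(W')$; there is really no obstacle here, and the proof should be essentially a one-line computation in each direction.

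First, I would unpack the definition of covariance: writing
\[ \Cov(Z,W) = \mathbb{E}(ZW) - \mathbb{E}(Z)\mathbb{E}(W), \qquad \Cov(Z',W') = \mathbb{E}(Z'W') - \mathbb{E}(Z')\mathbb{E}(W'), \]
and using the hypothesis $\mathbb{E}(Z)\mathbb{E}(W) = \mathbb{E}(Z')\mathbb{E}(W')$ to conclude that
\[ \Cov(Z,W) - \Cov(Z',W') = \mathbb{E}(ZW) - \mathbb{E}(Z'W'). \]
Hence one side vanishes iff the other does, establishing the first ``if and only if.''

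Next, for the second equivalence, I would expand by linearity:
\[ \mathbb{E}((1-Z)W) = \mathbb{E}(W) - \mathbb{E}(ZW), \qquad \mathbb{E}((1-Z')W') = \mathbb{E}(W') - \mathbb{E}(Z'W'), \]
and again use $\mathbb{E}(W) = \mathbb{E}(W')$ to subtract and conclude
\[ \mathbb{E}((1-Z)W) - \mathbb{E}((1-Z')W') = \mathbb{E}(Z'W') - \mathbb{E}(ZW). \]
So $\mathbb{E}((1-Z)W) = \mathbb{E}((1-Z')W')$ iff $\mathbb{E}(ZW) = \mathbb{E}(Z'W')$, which chains with the first equivalence.

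The lemma is essentially a bookkeeping statement meant to let the authors translate an equicorrelation hypothesis (equality of certain joint expectations over different subsets of indices) into statements about covariances and ``complementary'' random variables $1-Y_i$, which presumably will be useful when computing quantities like $\mathbb{E}(Y_{i_1}\cdots Y_{i_k}(1-Y_{j_1})\cdots (1-Y_{j_\ell}))$ in later sections. I do not anticipate any obstacle; the only thing to be careful about is to make sure each biconditional is stated as a genuine equivalence (not just one implication), which is automatic from the algebraic manipulations above since each step is reversible.
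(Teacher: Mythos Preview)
Your proof is correct and follows essentially the same approach as the paper: both arguments reduce to subtracting the two sides and cancelling using the hypotheses. The only cosmetic difference is that for the second equivalence the paper routes through the identity $\Cov(1-Z,W)=-\Cov(Z,W)$ rather than expanding $\mathbb{E}((1-Z)W)=\mathbb{E}(W)-\mathbb{E}(ZW)$ directly as you do, but this is the same computation.
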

\begin{proof} By definition $\mathbb{E}(ZW) = \mathbb{E}(Z)\mathbb{E}(W) + \Cov(Z,W)$. Thus,
\begin{eqnarray*} 
  \mathbb{E}(ZW) - \mathbb{E}(Z'W') 
&=& \mathbb{E}(Z)\mathbb{E}(W) + \Cov(Z,W)-(\mathbb{E}(Z')\mathbb{E}(W') + \Cov(Z',W'))\\
&=& \mathbb{E}(Z)\mathbb{E}(W) -\mathbb{E}(Z')\mathbb{E}(W') + \Cov(Z,W) -  \Cov(Z',W')\\
&=&\Cov(Z,W) -  \Cov(Z',W').
\end{eqnarray*}
Thus, $\Cov(Z,W) =  \Cov(Z',W')$ if and only if $\mathbb{E}(ZW)=\mathbb{E}(Z'W')$. Similarly,
\begin{eqnarray*} 
  &  & \mathbb{E}((1-Z)W) - \mathbb{E}((1-Z')W') \\
&=& \mathbb{E}(1-Z)\mathbb{E}(W) + \Cov(1-Z,W)-(\mathbb{E}(1-Z')\mathbb{E}(W') + \Cov(1-Z',W'))\\
&=& (1-\mathbb{E}(Z))\mathbb{E}(W) -(1-\mathbb{E}(Z'))\mathbb{E}(W') - \Cov(Z,W) +  \Cov(Z',W')\\
&=& - \Cov(Z,W) +  \Cov(Z',W'),
\end{eqnarray*}
as claimed, where we have used the fact that $\Cov(a+bX,Y)=b\Cov(X,Y)$, for any constants $a,b$ and random variables $X,Y$.
\end{proof}

\begin{lemma}\label{lem-covlinear}
	Let $m\geq 1$, and let $Y_1,\ldots,Y_{2m}$ be random variables such that $\Cov(Y_i,Y_j)=C$ is constant, for any $1\leq i,j\leq 2m$ with $i\neq j$. Then,
$$\Cov(Y_1+\cdots+Y_m,Y_{m+1}+\cdots+Y_{2m}) = m^2\cdot C,$$
and $\Var(Y_1+\ldots+Y_m)=\sum_{i=1}^m\Var(Y_i) + (m^2-m)\cdot C.$
\end{lemma}
\begin{proof}
	By the linearity property of the covariance,
	$$\Cov(Y_1+\cdots+Y_m,Y_{m+1}+\cdots+Y_{2m}) = \sum_{1\leq i,j\leq m} \Cov(Y_i,Y_{j+m}) = m^2\cdot C.$$
	Similarly,
	\begin{align*} 
	\Var(Y_1+\cdots+Y_m) &= \mathbb{E}((Y_1+\cdots +Y_m)^2)-\mathbb{E}((Y_1+\cdots +Y_m))^2 = \sum_{1\leq i,j\leq m}\mathbb{E}(Y_iY_j) - \mathbb{E}(Y_i)\mathbb{E}(Y_j)\\
	&= \sum_{i=1}^m \Var(Y_i) + \sum_{i\neq j}\Cov(Y_i,Y_j) = \sum_{i=1}^m \Var(Y_i) + (m^2-m)\cdot C.
	\end{align*} 
\end{proof}

\begin{prop}\label{prop-equicorr} 
Assume $H_B$, Let $n\geq 2$, let $m\geq 1$, let $1\leq i \leq m$ be fixed, let $X$ be a height, and let $\overline{X}=(X)$. Let $Y_j=Y_{\overline{X},i,j}$ as in Hypothesis \ref{hypb}, part (b.2).  Let $1\leq s, t\leq \lfloor n/2 \rfloor$ with $s+t\leq \lfloor n/2 \rfloor$. Then, there is a function  $C_{s,t}^n(X)$ such that 
$$C_{s,t}^n(X) = \Cov(Y_{i_1}Y_{i_2}\cdots Y_{i_s},Y_{k_1}Y_{k_2}\cdots Y_{k_t})$$
for any sets of indices $1\leq i_1<i_2<\ldots<i_s\leq \lfloor n/2 \rfloor$ and $1\leq k_1<k_2<\ldots<k_t\leq \lfloor n/2 \rfloor$ with $\{i_u\}\cap \{k_v\}=\emptyset$.
\end{prop}
\begin{proof} Let $1\leq i_1<i_2<\ldots<i_s\leq m$ and $1\leq k_1<k_2<\ldots<k_t\leq m$ with $\{i_u\}\cap \{k_v\}=\emptyset$, and let $1\leq i_1'<i_2'<\ldots<i_s'\leq m$ and $1\leq k_1'<k_2'<\ldots<k_t'\leq m$ with $\{i_u'\}\cap \{k_v'\}=\emptyset$ be another set of such indices. By $H_B$, part (b.2), the random variables $\{Y_j\}$ are equicorrelated, i.e.,  $\mathbb{E}^n_{s}(X)=\mathbb{E}(Y_{i_1}Y_{i_2}\cdots Y_{i_s})=\mathbb{E}(Y_{i_1'}Y_{i_2'}\cdots Y_{i_s'})$ and similarly $\mathbb{E}^n_{t}(X)=\mathbb{E}(Y_{k_1}Y_{k_2}\cdots Y_{k_t})=\mathbb{E}(Y_{k_1'}Y_{k_2'}\cdots Y_{k_t'})$ and also $$\mathbb{E}^n_{s+t}(X)=\mathbb{E}(Y_{i_1}Y_{i_2}\cdots Y_{i_s}Y_{k_1}Y_{k_2}\cdots Y_{k_t}) = \mathbb{E}(Y_{i_1'}Y_{i_2'}\cdots Y_{i_s'}Y_{k_1'}Y_{k_2'}\cdots Y_{k_t'}).$$
Then, we can apply Lemma \ref{lem-equicorr} with $Z=Y_{i_1}Y_{i_2}\cdots Y_{i_s}$, $W=Y_{k_1}Y_{k_2}\cdots Y_{k_t}$, $Z'=Y_{i_1'}Y_{i_2'}\cdots Y_{i_s'}$, and $W'=Y_{k_1'}Y_{k_2'}\cdots Y_{k_t'}$, to obtain the equality of the covariance terms. Thus, the covariance is independent of the chosen sets of $s$ and $t$ distinct random variables in $\{Y_i\}$, and in fact it only depends on $n$, $s$, $t$, and $X$.
\end{proof}

Hypothesis B asserts that $Y_{\operatorname{Hasse},n,X} \sim B(1,\rho_n(X))$, i.e., $Y_{\operatorname{Hasse},n,X}$ follows a binomial distribution with one trial. Now we want to reconstruct the distribution of the rank of a test curve $E\in \wss_n^X$ from that of $Y_{\operatorname{Hasse},n,X}$. We remind the reader that if $X$ is a height, $m=1$, then  $\overline{X}=(X)$ and   $\Sel_{2,n}(\overline{X})=\{\Sel_2(E) : E \in \wss_{n}^{X} \}$.
\begin{theorem}\label{thm-ranksbinomial}
Let $n\geq 1$ be fixed, assume $H_B$, let $R_n = \{ 0,1,\ldots, \lfloor n/2 \rfloor \}$,  and let $X$ be a height. Let $\rank_{n,X}\colon \Sel_{2,n}(\overline{X})\to R_n$ be the function given by  the random variable $Y_1+\ldots+Y_{\lfloor n/2 \rfloor}$ if $n\geq 2$ (and equal $0$ if $n=1$), where $Y_j=Y_{\overline{X},1,j}$ for any $1\leq j \leq \lfloor n/2 \rfloor$.  Then:
\begin{enumerate}
\item Let $E=(X,n,\vec{s}_E )$ be a test elliptic curve with $\vec{s}_E = \Sel_2(E)$. Then
$$\rank_{n,X}(\vec{s}_E) = \frac{\rank(E)-(n\bmod 2)}{2}$$
where $\rank(E)$  appeared in Definition \ref{defn-rank}.
\item If $n\geq 2$,  then the expected value and variance of $\rank_{n,X}$ are given by
\begin{align*} 
\mathbb{E}(\rank_{n,X}) &= \lfloor n/2 \rfloor\cdot \rho_n(X), \text{ and}\\
\Var(\rank_{n,X} ) & =\lfloor n/2 \rfloor\cdot(\rho_n(X)(1-\rho_n(X)) + (\lfloor n/2 \rfloor-1)\cdot C_{1,1}^n(X)),
\end{align*}
where $C_{1,1}^n(X)=\operatorname{Cov}(Y_{j_1},Y_{j_2})$ is the covariance function of  any two random variables given by Proposition \ref{prop-equicorr}.
\item If the variables  $\{Y_j\}$ were mutually uncorrelated (resp. approximately uncorrelated, i.e., if  $C_{1,1}^n(X)\approx 0$), then $\rank_{n,X}$ follows (resp. approximately) a binomial distribution of the form $B(\lfloor n/2 \rfloor,\rho_n(X))$, with expected value $\lfloor n/2 \rfloor\cdot\rho_n(X)$ and variance $\lfloor n/2 \rfloor\cdot\rho_n(X)(1-\rho_n(X))$. 
\item Let $X$ and $X'$ be heights, let $\overline{X}=(X,X')$, and let $\Sel_{2,n}(\overline{X})$ be as in $H_B$. Then, the variables $\rank_{n,X}=\sum_{j=1}^{\lfloor n/2\rfloor} Y_{\overline{X},1,j}$ and $\rank_{n,X'}=\sum_{k=1}^{\lfloor n/2\rfloor} Y_{\overline{X},2,k}$ are uncorrelated.
\end{enumerate}
\end{theorem}
\begin{proof}
For part (1), note that if $n=1$, then $\rank(E)=1$ (see Remark \ref{rem-rank1}) and, therefore $\rank_{n,X}(E)=(\rank(E)-(n\bmod 2))/2=0$.  For the rest of the proof, let us assume that $n\geq 2$. If $X$ is a height, and  $E=(X,n,\Sel_2)$ is a test elliptic curve with $\vec{s}_E = \Sel_2(E)=(s_{E,1},\ldots,s_{E,\lfloor n/2 \rfloor})$, then
\begin{align*} \frac{1}{2}(\rank(E)-(n\bmod 2)) & = \#\{ 1\leq j\leq \lfloor n/2 \rfloor  : s_{E,j} = \MW\}\\ &=\sum_{j=1}^{\lfloor n/2 \rfloor} Y_{\overline{X},1,j}(\vec{s}_E) = \sum_{j=1}^{\lfloor n/2 \rfloor} Y_j(\vec{s}_E)=\rank_{n,X}(\vec{s}_E).\end{align*}

For (2), we first compute the expected value of $\rank_{n,X}$: 
$$\mathbb{E}(\rank_{n,X}) = \mathbb{E}\left(\sum_{j=1}^{\lfloor n/2 \rfloor} Y_j \right) =  \sum_{j=1}^{\lfloor n/2 \rfloor} \mathbb{E}\left( Y_j\right) = \lfloor n/2 \rfloor \cdot \rho_n(X),$$
since each $Y_j\sim B(1,\rho_n(X))$ by Hypothesis B. Let us now calculate the variance of $\rank_{n,X}=\sum Y_j$.
\begin{eqnarray*}
\Var(\rank_{n,X}) &=& \Var\left(\sum_{j=1}^{\lfloor n/2 \rfloor} Y_j \right) =  \sum_{j=1}^{\lfloor n/2 \rfloor} \Var\left( Y_j  \right) +  \lfloor n/2 \rfloor(\lfloor n/2 \rfloor -1 ) \cdot C_{1,1}^n(X)  \\
&=& \lfloor n/2 \rfloor \cdot \rho_n(X)(1-\rho_n(X)) + \lfloor n/2 \rfloor(\lfloor n/2 \rfloor -1)\cdot C_{1,1}^n(X)\\
&=& \lfloor n/2 \rfloor \cdot (\rho_n(X)(1-\rho_n(X)) + (\lfloor n/2 \rfloor -1)\cdot C_{1,1}^n(X)),
\end{eqnarray*}
where we have used Lemma \ref{lem-covlinear}, the fact that for any $j_1\neq j_2$, we have  $\operatorname{Cov}(Y_{j_1},Y_{j_2})=C_{1,1}^n(X)$ by Proposition \ref{prop-equicorr}, and $Y_j \sim B(1,\rho_n(X))$. This proves (2).

In particular, if the random variables $Y_j$ were uncorrelated samples of a Bernoulli distribution (or similarly if $C_{1,1}^n(X)\approx 0$), then $\rank_{n,X}=\sum Y_i$ would follow a binomial distribution $B(\lfloor n/2 \rfloor,\rho_n(X))$. This proves (3).

For (4), let $X$ and $X'$ be heights, let $\overline{X}=(X,X')$, and let $\Sel_{2,n}(\overline{X})$ be as in $H_B$, and we will show that $\rank_{n,X}=\sum_{j=1}^{\lfloor n/2\rfloor} Y_{\overline{X},1,j}=\sum_j Y_j$ and $\rank_{n,X'}=\sum_{k=1}^{\lfloor n/2\rfloor} Y_{\overline{X},2,k}=\sum_{k} Y'_{k}$ are uncorrelated. Indeed,
\begin{align*} \mathbb{E}((\rank_{n,X})(\rank_{n,X'})) &= \mathbb{E}\left(\left(\sum_j Y_j\right)\left(\sum_{k} Y_{k}'\right)\right) = \mathbb{E}\left(\sum_{j,k} Y_jY_k'\right)=\sum_{j,k} \mathbb{E}(Y_jY_k')\\
&=\sum_{j,k} \mathbb{E}(Y_j)\mathbb{E}(Y_k') = \left(\sum_j \mathbb{E}(Y_j)\right)\left(\sum_k \mathbb{E}(Y_k')\right)\\
&=\left(\mathbb{E}\left(\sum_j Y_j\right)\right)\left(\mathbb{E}\left(\sum_j Y_k'\right)\right)=\mathbb{E}(\rank_{n,X})\mathbb{E}(\rank_{n,X'}),
\end{align*}
where we have used the fact that $\mathbb{E}(Y_jY_k')= \mathbb{E}(Y_j)\mathbb{E}(Y_k')$ because $ Y_{\overline{X},1,j}$ and $Y_{\overline{X},2,k}$ are uncorrelated by $H_B$, part (b.1). 
This completes the proof of (4) and of the theorem.
\end{proof}

Using Theorem \ref{thm-ranksbinomial}, we shall describe the average rank and  distribution of curves by Mordell--Weil rank in a sample set of test curves of Selmer rank $n$.

\begin{cor}\label{cor-hasseaverank} Let $E_1,\ldots, E_m$ be a sample of $m\geq 1$ test elliptic curves of Selmer rank $n$, chosen independently, and of heights $X_1,\ldots,X_m$. Then, the expected value of the average rank is $$\mathbb{E}\left(\frac{1}{m}\sum_{i=1}^m \rank(E_i)\right) = (n\bmod 2) +  \frac{2\lfloor n/2 \rfloor}{m}\sum_{i=1}^m \rho_n(X_i)$$ with standard error given by  $\frac{1}{m}\sqrt{\lfloor n/2 \rfloor \sum_{i=1}^m (\rho_n(X_i)(1-\rho_n(X_i)) + (\lfloor n/2 \rfloor -1)C_{1,1}^n(X_i))}$. 
\end{cor}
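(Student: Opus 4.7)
The proof is essentially a bookkeeping computation that assembles the ingredients already established in Theorem~\ref{thm-ranksbinomial}. The plan is to express $\rank(E_i)$ in terms of the binomial-like random variable $\rank_{n,X_i}(s_i)$ defined there, and then apply linearity of expectation, the cross-curve independence in part (4), and the variance formula in part (2).

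First, for each $i$, set $s_i = \Sel_2(E_i)$. Theorem~\ref{thm-ranksbinomial}(1) gives the identity $\rank(E_i) = (n\bmod 2) + 2\,\rank_{n,X_i}(s_i)$. By Theorem~\ref{thm-ranksbinomial}(2), the conditional expectation of $\rank_{n,X_i}(s_i)$ on $S=\bigcup_{E\in\wss_n^{X_i}}\Sel_2(E)$ (which is where $s_i$ lives once $E_i\in\wss_n^{X_i}$ is drawn) equals $\lfloor n/2\rfloor\,\rho_n(X_i)$. So by linearity of expectation,
$$\mathbb{E}(\rank(E_i)) = (n\bmod 2) + 2\lfloor n/2\rfloor\,\rho_n(X_i).$$
Averaging over $i=1,\dots,m$ and using linearity one more time immediately yields the claimed mean formula.

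For the standard error, I would proceed from the definition $\operatorname{SEM}=\sqrt{\operatorname{Var}(\frac{1}{m}\sum_i\rank(E_i))}$. Theorem~\ref{thm-ranksbinomial}(4) asserts that the variables $\rank_{n,X_i}(s_i)$ are mutually independent across the independently drawn $E_i$'s, and so the affine transforms $\rank(E_i)=(n\bmod 2)+2\rank_{n,X_i}(s_i)$ are also mutually independent. Hence
$$\operatorname{Var}\!\left(\frac{1}{m}\sum_{i=1}^m \rank(E_i)\right) = \frac{1}{m^2}\sum_{i=1}^m \operatorname{Var}(\rank(E_i)).$$
Since variance is translation-invariant and quadratic in scaling, $\operatorname{Var}(\rank(E_i))$ reduces to a (constant) multiple of $\operatorname{Var}(\rank_{n,X_i}(s_i))$, which Theorem~\ref{thm-ranksbinomial}(2) evaluates as $\lfloor n/2\rfloor(\rho_n(X_i)(1-\rho_n(X_i))+(\lfloor n/2\rfloor-1)C_{1,1}^n(X_i))$. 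Taking square roots then produces an expression of the stated form.

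There is no real obstacle; the only subtle point is keeping straight which independence statement is in play, namely the cross-curve independence of Theorem~\ref{thm-ranksbinomial}(4), as opposed to the in-curve behaviour of the $Y_j$'s (which are only equicorrelated, not independent, and whose interactions have already been absorbed into the variance computation of part (2) via the covariance function $C_{1,1}^n(X)$ from Proposition~\ref{prop-equicorr}).
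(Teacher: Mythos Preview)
Your proposal is correct and follows essentially the same route as the paper's proof: both invoke Theorem~\ref{thm-ranksbinomial}(1) to rewrite $\rank(E_i)$ in terms of $\rank_{n,X_i}(s_i)$, use part~(2) for the mean and variance of each summand, and use the cross-curve independence of part~(4) to kill the covariance terms. One small point worth tracking explicitly: the paper computes the variance of $\frac{1}{m}\sum_i \rank_{n,X_i}(s_i)$ directly (which yields the stated $\frac{1}{m}\sqrt{\cdots}$), whereas your phrasing passes through $\operatorname{Var}(\rank(E_i))=4\operatorname{Var}(\rank_{n,X_i}(s_i))$ and then asserts the result has ``the stated form''---if you carry that through you pick up an extra factor of $2$, so it is cleaner to follow the paper and work with the half-rank quantity throughout.
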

 \begin{proof} 
Let $E_1,\ldots,E_m$ be as in the statement. Let $\overline{X} = (X_1,\ldots,X_m)$ and let $\Sel_{2,n}(\overline{X})$ be as in $H_B$. Then, Theorem \ref{thm-ranksbinomial} gives us the expected value and variance of $\rank_{n,X_i}(\Sel_2(E_i)))=(\rank(E_i)-(n\bmod 2))/2$ and therefore we can compute the expected value.
 $$\mathbb{E}\left(\frac{1}{m}\sum_{i=1}^m \frac{\rank(E_i)-(n\bmod 2)}{2}\right)=\frac{1}{m}\sum_{i=1}^m \mathbb{E}\left( \rank_{n,X_i}(\Sel_2(E_i))\right)=\frac{1}{m}\sum_{i=1}^m \lfloor n/2 \rfloor\rho_n(X_i),$$
 since $\mathbb{E}(\rank_{n,X_i})=\lfloor n/2 \rfloor\rho_n(X_i)$.
 Thus, $$\mathbb{E}\left(\frac{1}{m}\sum_{i=1}^m \rank(E_i)\right) =(n\bmod 2)+\frac{2\lfloor n/2 \rfloor}{m}\sum_{i=1}^m \rho_n(X_i),$$ as claimed. Next, Theorem \ref{thm-ranksbinomial}, part (4), shows that the values of the random variables $Z_i=\rank_{n,X_i}$ are mutually uncorrelated. In particular, the covariance $\Cov(Z_i,Z_j)=0$ for all $i\neq j$, and it follows that $\Var(Z_i+Z_j)=\Var(Z_i)+\Var(Z_j)+2\Cov(Z_i,Z_j)=\Var(Z_i)+\Var(Z_j)$. Hence, we can compute the variance as follows:
 \begin{eqnarray*} 
 \operatorname{Var}\left(\frac{1}{m}\sum_{i=1}^m \rank_{n,X_i}\right) &=& \frac{1}{m^2}\sum_{i=1}^m \operatorname{Var}(\rank_{n,X_i})\\
 &=& \frac{1}{m^2}\sum_{i=1}^m \lfloor n/2 \rfloor \cdot (\rho_n(X_i)(1-\rho_n(X_i)) + (\lfloor n/2 \rfloor -1)\cdot C_{1,1}^n(X_i)),
 \end{eqnarray*}
 and therefore the standard error is given by 
 $$\sqrt{\frac{\lfloor n/2 \rfloor}{m^2} \sum_{i=1}^m (\rho_n(X_i)(1-\rho_n(X_i)) + (\lfloor n/2 \rfloor -1)C_{1,1}^n(X_i))}$$ 
 as desired.
 \end{proof}

Before we go on to describe the probability of a given Mordell--Weil rank, we need a result on equicorrelated random variables.

\begin{lemma}\label{lem-equicorr2} 
Suppose that the random variables $\{Y_i\}_{i=1}^n$ are equicorrelated. Then:
\begin{enumerate} 
\item For any $1\leq m\leq n$, and any indices $1\leq i_1<\cdots<i_m\leq n$ and $1\leq i_1'<\cdots<i_m'\leq n$,
$$\mathbb{E}((1-Y_{i_1})\cdots (1-Y_{i_m})) = \mathbb{E}((1-Y_{i_1'})\cdots(1-Y_{i_m'})).$$
\item If $X$ and $\{Y_i\}$ are all distinct equicorrelated random  variables, and $1\leq m\leq n$, then:
$$\Cov(X,(1-Y_1)(1-Y_2)\cdots (1-Y_m))= \sum_{i=1}^m (-1)^i\binom{m}{i}\Cov\left(X,\prod_{k=1}^i Y_k\right).$$
\end{enumerate} 
\end{lemma}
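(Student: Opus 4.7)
For part (1), my plan is to expand the product by distributivity:
$$(1-Y_{i_1})(1-Y_{i_2})\cdots(1-Y_{i_m}) = \sum_{S\subseteq\{i_1,\ldots,i_m\}} (-1)^{|S|}\prod_{k\in S} Y_k,$$
with the convention that the empty product is $1$. Taking expectations and using linearity gives
$$\mathbb{E}\left((1-Y_{i_1})\cdots(1-Y_{i_m})\right) = \sum_{j=0}^m (-1)^j \sum_{\substack{S\subseteq\{i_1,\ldots,i_m\}\\|S|=j}} \mathbb{E}\left(\prod_{k\in S} Y_k\right).$$
By the equicorrelation hypothesis, $\mathbb{E}\left(\prod_{k\in S} Y_k\right)$ depends only on $j=|S|$ (call this common value $\mathbb{E}^n_j$), so the inner sum contributes $\binom{m}{j}\mathbb{E}^n_j$. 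The total is $\sum_{j=0}^m (-1)^j \binom{m}{j}\mathbb{E}^n_j$, which depends only on $m$ and not on the particular indices $i_1,\ldots,i_m$. This gives the equality with any other set of indices $i_1',\ldots,i_m'$.

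For part (2), I will perform the same expansion and use the bilinearity of covariance:
$$\Cov\bigl(X,(1-Y_1)\cdots(1-Y_m)\bigr) = \sum_{S\subseteq\{1,\ldots,m\}} (-1)^{|S|}\Cov\left(X,\prod_{k\in S} Y_k\right).$$
The $S=\emptyset$ term equals $\Cov(X,1)=0$ and thus drops out. For $|S|=i\geq 1$, I need to argue that $\Cov\left(X,\prod_{k\in S} Y_k\right)$ depends only on $i$: writing it as $\mathbb{E}\left(X\prod_{k\in S} Y_k\right) - \mathbb{E}(X)\mathbb{E}\left(\prod_{k\in S} Y_k\right)$, the first term is the expectation of a product of $i+1$ distinct variables from the equicorrelated family $\{X\}\cup\{Y_j\}$ and so depends only on $i+1$, while the second term depends only on $i$ by part (1)'s argument. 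Grouping subsets by cardinality and noting there are $\binom{m}{i}$ of size $i$, each contributing the same covariance as $\{1,\ldots,i\}$ does, yields
$$\Cov\bigl(X,(1-Y_1)\cdots(1-Y_m)\bigr) = \sum_{i=1}^m (-1)^i \binom{m}{i}\Cov\left(X,\prod_{k=1}^i Y_k\right),$$
as claimed.

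Both parts are essentially exercises in the inclusion-exclusion style expansion of $\prod(1-Y_i)$ combined with the hypothesis that joint expectations depend only on the number of variables involved; no step presents a real obstacle, since bilinearity of covariance and linearity of expectation carry everything through. The only point to be careful about is that in part (2) the $X$ variable must itself be part of the equicorrelated family so that $\mathbb{E}(X\prod_{k\in S} Y_k)$ is index-independent; this is explicitly built into the hypothesis by the phrase \emph{``$X$ and $\{Y_i\}$ are all distinct equicorrelated random variables.''}
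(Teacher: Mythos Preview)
Your proof is correct and follows essentially the same approach as the paper. For part (2) the arguments are virtually identical: expand $(1-Y_1)\cdots(1-Y_m)$ by distributivity, apply bilinearity of covariance, and use equicorrelation (the paper invokes Proposition~\ref{prop-equicorr} where you argue directly from the definition of covariance) to collapse each group of $\binom{m}{i}$ terms to a single representative. For part (1) the paper sketches an induction on $m$ using Lemma~\ref{lem-equicorr} for the induction step, whereas you do the full inclusion--exclusion expansion in one shot; your route is arguably cleaner and more transparent, but the underlying idea is the same.
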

\begin{proof}
Part (1) can be easily shown via induction on $m$, where the induction step was essentially proved in Lemma \ref{lem-equicorr}. For part (2), we note that $\Cov(X,(1-Y_1)(1-Y_2)\cdots (1-Y_m))$ equals
\begin{eqnarray*}
 &=& \Cov\left(X,1-\left(\sum_i Y_i\right)+\left(\sum_{i\neq j}Y_iY_j\right)+\cdots+(-1)^m\prod_i Y_i\right)\\
 &=& -\sum_i\Cov(X,Y_i)+\sum_{i\neq j}\Cov(X,Y_iY_j)+\cdots+(-1)^m\Cov\left(X,\prod_i Y_i\right)\\
&=& \sum_{i=1}^m (-1)^i\binom{m}{i}\Cov\left(X,\prod_{k=1}^i Y_k\right),
\end{eqnarray*}
where we have used $\Cov(X,\prod_{s=1}^t Y_{i_s}) =\Cov(X,Y_1\cdots Y_t)$ for any indices $1\leq i_1<\cdots< i_t\leq m$  by Proposition \ref{prop-equicorr}. 
\end{proof}

\begin{remark}\label{rem-notation}
Let us introduce some more notation to simplify our formulas. By Lemmas \ref{lem-equicorr} and \ref{lem-equicorr2}, if $Y_1,\ldots,Y_m$ are distinct equicorrelated random variables, and $1\leq s,t$ with $s+t\leq m$, then the value of
\begin{eqnarray}\label{eq-est}
\mathbb{E}_{s,t}=\mathbb{E}(Y_{i_1}\cdots Y_{i_s}(1-Y_{i_{s+1}})\cdots(1-Y_{i_{s+t}})),
\end{eqnarray}
is independent for any set of $s+t$ distinct indices $\{i_k\}_{k=1}^{s+t} \subseteq \{1,\ldots,m\}$. When the random variables $Y_1,\ldots,Y_{\lfloor n/2 \rfloor}$ are the variables $Y_{\overline{X},i,j}$, with $1\leq j\leq \lfloor n/2 \rfloor$ and a fixed $i$, given by Hypothesis $H_B$, we will write $\mathbb{E}_{s,t}^n(X)=\mathbb{E}_{s,t}$, or simply $\mathbb{E}_{s,t}^n$, to indicate the expected value of a product of random variables as in Equation (\ref{eq-est}) above (which extends the notation $\mathbb{E}_{k}^n(X)=\mathbb{E}(Y_1\cdots Y_k)$ of $H_B$). We also write $\mathbb{E}_{0,0}^{1}(X)=1$. The following lemma gives recursive formulas to compute any expected value $\mathbb{E}_{s,t}^n$.
\end{remark}

\begin{lemma}\label{lem-equicorr3}
Let $X$ be a height, let $\overline{X}=(X)$, and let $Y_j=Y_{\overline{X},1,j}$ for $1\leq j \leq \lfloor n/2 \rfloor$ be the equicorrelated random variables given by Hypothesis $H_B$. Let $0\leq s,t$ with $s+t\leq \lfloor n/2\rfloor$, and let $C_{s,t}^n(X)$ be the covariance coefficient of Proposition \ref{prop-equicorr}. Then, with notation as in Remark \ref{rem-notation}, we have identities
\begin{enumerate}
\item $\mathbb{E}_{1,0}^n = \rho_{n}(X)$ and $\mathbb{E}_{0,1}^n = 1 - \mathbb{E}_{1,0}^n=1-\rho_n(X)$.
\item If $s\geq 2$, then $\mathbb{E}_{s,0}^n = \mathbb{E}_{s-1,0}^n\cdot \mathbb{E}_{1,0}^n + C_{s-1,1}^n(X)$. 
\item If $t\geq 1$, then $\mathbb{E}_{0,t}^n = \mathbb{E}_{0,t-1}^n\cdot \mathbb{E}_{0,1}^n - \sum_{i=1}^{t-1} (-1)^i\binom{t-1}{i}C_{1,i}^n(X)$. 
\item  $\mathbb{E}_{s,t}^n = \mathbb{E}_{s,0}^n\cdot \mathbb{E}_{0,t}^n + \sum_{i=1}^t (-1)^i\binom{t}{i}C_{s,i}^n(X)$.
\end{enumerate}
\end{lemma}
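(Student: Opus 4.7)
The plan is that all four identities follow by iterated application of the standard identity $\mathbb{E}(AB) = \mathbb{E}(A)\mathbb{E}(B) + \Cov(A,B)$, combined with Lemma \ref{lem-equicorr2}(2) to handle the $(1-Y_i)$ factors, together with Proposition \ref{prop-equicorr} to identify each covariance of disjoint products with the appropriate $C_{s,t}^n(X)$.

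For part (1), I would just observe that by Hypothesis $H_B$ each $Y_i \sim B(1,\rho_n(X))$, so $\mathbb{E}_{1,0}^n = \mathbb{E}(Y_1) = \rho_n(X)$ and $\mathbb{E}_{0,1}^n = \mathbb{E}(1-Y_1) = 1-\rho_n(X)$. Part (2) is then immediate: apply the covariance identity with $A = Y_1\cdots Y_{s-1}$ and $B = Y_s$, writing
\[
\mathbb{E}(Y_1\cdots Y_s) = \mathbb{E}(Y_1\cdots Y_{s-1})\,\mathbb{E}(Y_s) + \Cov(Y_1\cdots Y_{s-1},Y_s),
\]
and invoke Proposition \ref{prop-equicorr} to identify the covariance as $C_{s-1,1}^n(X)$ (since the index sets $\{1,\ldots,s-1\}$ and $\{s\}$ are disjoint).

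For part (3), I would write $(1-Y_1)\cdots(1-Y_t) = (1-Y_1)\cdots(1-Y_{t-1})\cdot(1-Y_t)$ and apply the covariance identity, yielding
\[
\mathbb{E}_{0,t}^n = \mathbb{E}_{0,t-1}^n\cdot \mathbb{E}_{0,1}^n + \Cov\bigl((1-Y_1)\cdots(1-Y_{t-1}),\,1-Y_t\bigr).
\]
Since $\Cov(\,\cdot\,,1-Y_t) = -\Cov(\,\cdot\,,Y_t)$ and covariance is symmetric, I can rewrite the covariance term as $-\Cov(Y_t,(1-Y_1)\cdots(1-Y_{t-1}))$. Now Lemma \ref{lem-equicorr2}(2) applies (the variable $Y_t$ being distinct from $Y_1,\ldots,Y_{t-1}$), giving
\[
\Cov\bigl(Y_t,(1-Y_1)\cdots(1-Y_{t-1})\bigr) = \sum_{i=1}^{t-1}(-1)^i\binom{t-1}{i}\Cov\Bigl(Y_t,\prod_{k=1}^{i}Y_k\Bigr) = \sum_{i=1}^{t-1}(-1)^i\binom{t-1}{i}C_{1,i}^n(X),
\]
where the last equality uses Proposition \ref{prop-equicorr} with disjoint index sets $\{t\}$ and $\{1,\ldots,i\}$. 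Part (4) is proved the same way: apply the covariance identity with $A = Y_1\cdots Y_s$ and $B = (1-Y_{s+1})\cdots(1-Y_{s+t})$, then apply Lemma \ref{lem-equicorr2}(2) to expand the covariance, and finally use Proposition \ref{prop-equicorr} to replace each $\Cov(Y_1\cdots Y_s, Y_{s+1}\cdots Y_{s+i})$ by $C_{s,i}^n(X)$, which is legal because $\{1,\ldots,s\}$ and $\{s+1,\ldots,s+i\}$ remain disjoint throughout.

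There is no real obstacle here — the argument is essentially bookkeeping. The one subtle point to flag is that Lemma \ref{lem-equicorr2}(2) is stated for a single random variable $X$ equicorrelated with $\{Y_i\}$, whereas we wish to apply it with $X$ being a \emph{product} $Y_1\cdots Y_s$. Inspecting the proof of Lemma \ref{lem-equicorr2}(2), one sees that it only uses the identity $\Cov(X,Y_{i_1}\cdots Y_{i_k}) = \Cov(X,Y_1\cdots Y_k)$ for indices disjoint from those in $X$, which is precisely the content of Proposition \ref{prop-equicorr} when $X$ itself is a product of $Y$'s on disjoint indices. Thus the application is justified, and the four recursions follow.
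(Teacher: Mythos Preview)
Your proof is correct and follows essentially the same approach as the paper's: both arguments repeatedly apply $\mathbb{E}(AB)=\mathbb{E}(A)\mathbb{E}(B)+\Cov(A,B)$, then use Lemma~\ref{lem-equicorr2}(2) to expand covariances involving $(1-Y_i)$ factors, and identify the resulting covariances with $C^n_{s,t}(X)$ via Proposition~\ref{prop-equicorr}. Your remark about applying Lemma~\ref{lem-equicorr2}(2) with $X$ a product of $Y$'s is a nice clarification that the paper leaves implicit.
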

\begin{proof} With notation as in the statement, we compute:
\begin{enumerate}
\item $\mathbb{E}_{1,0}^n = \mathbb{E}(Y_1)=\rho_{n}(X)$ and $\mathbb{E}_{0,1}^n =\mathbb{E}(1-Y_1)= 1 - \mathbb{E}_{1,0}^n$.
\item If $s\geq 2$, then $\mathbb{E}_{s,0}^n = \mathbb{E}(Y_1\cdots Y_{s-1})\mathbb{E}(Y_s)+\Cov(Y_1\cdots Y_{s-1},Y_s)=\mathbb{E}_{s-1,0}^n\cdot \mathbb{E}_{1,0}^n + C_{s-1,1}^n(X)$. 
\item If $t\geq 1$, then $$\mathbb{E}_{0,t}^n =\mathbb{E}((1-Y_1)\cdots (1-Y_{t-1}))\mathbb{E}(1-Y_t)+\Cov((1-Y_1)\cdots (1-Y_{t-1}),1-Y_t)$$
and the covariance term equals $-\Cov((1-Y_1)\cdots (1-Y_{t-1}),Y_t)$ which in turn is $$- \sum_{i=1}^{t-1} (-1)^i\binom{t-1}{i}\Cov\left(Y_t,\prod_{k=1}^i Y_k\right)=-\sum_{i=1}^{t-1} (-1)^i\binom{t-1}{i}C_{1,i}^n(X)$$ by Lemma \ref{lem-equicorr2}.
\item  $\mathbb{E}_{s,t}^n = \mathbb{E}_{s,0}^n\cdot \mathbb{E}_{0,t}^n + \Cov(Y_1\cdots Y_s,(1-Y_1)\cdots (1-Y_{t}))$ and, by Lemma \ref{lem-equicorr2}, the covariance term equals $\sum_{i=1}^t (-1)^i\binom{t}{i}C_{s,i}^n(X)$ as claimed.
\end{enumerate}
\end{proof}

\begin{cor}\label{cor-hasse1}
Let us assume $H_B$. Let $n\geq 2$ be fixed, let $X$ be a height, let $\overline{X}=(X)$. Then:
\begin{enumerate}  
\item The
random variable $Y_{\rk = n-2j}\colon \Sel_{2,n}(\overline{X})\to \{0,1\}$ given by
$$Y_{\operatorname{rk} = n-2j}(\vec{s}_E) = \begin{cases}
1 & \text{ if } \rank_{n,X}(\vec{s}_E)=n-2j,\\
0 & \text{ otherwise,}
\end{cases}$$
is given by
$$Y_{\operatorname{rk} = n-2j} = \sum_{1\leq k_1<\cdots < k_{m(j)}\leq \lfloor n/2 \rfloor} Y_{k_1}\cdot Y_{k_2}\cdots Y_{k_{m(j)}} \cdot \frac{\prod_{i=1}^{\lfloor n/2 \rfloor} (1-Y_i)}{(1-Y_{k_1})(1-Y_{k_2})\cdots (1-Y_{k_{m(j)}})}$$
where $m(j) = \lfloor n/2 \rfloor - j$, and the random variables $Y_j = Y_{\overline{X},1,j}$ are as given by Hypothesis B, such that $\rank_{n,X} = \sum Y_j$. 
\item If the variables $Y_j$ are mutually uncorrelated, then the expected value of $Y_{\operatorname{rk} = n-2j}$ is given by 
$$\mathbb{E}(Y_{\operatorname{rk} = n-2j}) = \binom{\lfloor n/2 \rfloor}{j}\rho_{n}(X)^{\lfloor n/2 \rfloor-j}(1-\rho_{n}(X))^{j}.$$
\item In general, the expected value of $Y_{\operatorname{rk} = n-2j}$, using the notation of Remark \ref{rem-notation} and Lemma \ref{lem-equicorr3}, is given by
$$\mathbb{E}(Y_{\operatorname{rk} = n-2j})=\binom{\lfloor n/2 \rfloor}{j}\cdot \mathbb{E}_{m(j),j}^n (X),$$
where $\mathbb{E}_{m(j),j}^n (X)$ can be calculated recursively using the formulae of Lemma \ref{lem-equicorr3}.
\end{enumerate} 
\end{cor}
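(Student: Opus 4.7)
The plan is to prove the three parts in sequence, with part (1) being the combinatorial core and parts (2) and (3) following by linearity of expectation, exploiting independence (from $H_B$ part (b.1)) and equicorrelation (from $H_B$ part (b.2)) respectively.

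For part (1), the first observation is to translate the condition $\rank(E) = n - 2j$ through the formula $\rank(E) = (n \bmod 2) + 2\rank_{n,X}(s)$ of Theorem \ref{thm-ranksbinomial}(1), which shows that the event in question is equivalent to $\rank_{n,X}(s) = \lfloor n/2 \rfloor - j = m(j)$. Since $\rank_{n,X}(s) = \sum_{i=1}^{\lfloor n/2\rfloor} Y_i(s)$ with each $Y_i \in \{0,1\}$, this event holds if and only if exactly $m(j)$ of the $Y_i$'s equal $1$ and the remaining $j$ equal $0$. For each choice of indices $1 \leq k_1 < \cdots < k_{m(j)} \leq \lfloor n/2 \rfloor$, the product $Y_{k_1}\cdots Y_{k_{m(j)}} \prod_{i \notin \{k_\ell\}} (1 - Y_i)$ is the indicator of the specific event ``$Y_{k_\ell} = 1$ for all $\ell$ and $Y_i = 0$ otherwise.'' These events are pairwise disjoint over the $\binom{\lfloor n/2\rfloor}{m(j)} = \binom{\lfloor n/2\rfloor}{j}$ choices of $m(j)$-subsets, so summing the indicators recovers $Y_{\operatorname{rk} = n-2j}$. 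I would then note that the quotient $\prod_{i=1}^{\lfloor n/2 \rfloor}(1-Y_i) / \prod_\ell (1-Y_{k_\ell})$ in the stated formula is a purely formal rewriting of $\prod_{i \notin \{k_\ell\}}(1-Y_i)$; the one minor point to address is that whenever some $Y_{k_\ell} = 1$, the summand vanishes because of the factor $Y_{k_\ell} \cdot (1-Y_{k_\ell})$ hidden in the full product, so no genuine division by zero ever occurs.

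For part (2), the assumption $X = X_1 = \cdots = X_{\lfloor n/2 \rfloor}$ places us on the product space $\Sel_{2,n}^{\overline{X}} = \prod_i \Sel_{2,n}^{X}$, where by Hypothesis B part (b.1) the variables $Y_1,\ldots,Y_{\lfloor n/2 \rfloor}$ are mutually independent Bernoulli with parameter $\rho_n(X)$. Applying expectation termwise to the formula of part (1) and using independence, each of the $\binom{\lfloor n/2 \rfloor}{j}$ summands contributes $\rho_n(X)^{m(j)}(1-\rho_n(X))^j = \rho_n(X)^{\lfloor n/2 \rfloor - j}(1-\rho_n(X))^j$, which gives the binomial mass function.

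For part (3), we restrict to $S = \bigcup_{E \in \wss_n^X} \Sel_2(E)$, where by Hypothesis B part (b.2) the $Y_i$'s are equicorrelated but not necessarily independent. Applying expectation conditional on $S$ to the formula of part (1), each summand becomes $\mathbb{E}(Y_{k_1}\cdots Y_{k_{m(j)}} \cdot (1-Y_{i_1})\cdots(1-Y_{i_j}) \vert S)$ for some disjoint index sets of sizes $m(j)$ and $j$. By Proposition \ref{prop-equicorr} together with Lemma \ref{lem-equicorr2} (which generalizes equicorrelation to products of $(1-Y_i)$ factors), this expectation depends only on the sizes $(m(j), j)$ and not on the specific choice of indices, so it equals $\mathbb{E}_{m(j),j}^n(X)$ in the notation of Remark \ref{rem-notation}. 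Summing over the $\binom{\lfloor n/2 \rfloor}{j}$ index choices yields the claimed identity, and Lemma \ref{lem-equicorr3} gives the recursive computability. There is no substantial obstacle here beyond the notational bookkeeping; the main point to emphasize carefully is the invariance granted by equicorrelation, which is exactly what allows the sum to collapse to a single multiple of $\mathbb{E}_{m(j),j}^n(X)$.
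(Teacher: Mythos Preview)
Your proof is correct and follows essentially the same approach as the paper: the combinatorial identification of $Y_{\operatorname{rk}=n-2j}$ as a sum of indicator products in part (1), independence from $H_B$(b.1) for part (2), and equicorrelation from $H_B$(b.2) for part (3). Your treatment is in fact slightly more careful than the paper's in two places: you explicitly translate $\rank(E)=n-2j$ into $\rank_{n,X}(s)=m(j)$ via Theorem~\ref{thm-ranksbinomial}(1), and you address the formal nature of the quotient $\prod_i(1-Y_i)/\prod_\ell(1-Y_{k_\ell})$, neither of which the paper spells out.
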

 \begin{proof}
 Let $\{Y_j\}$ be the random variables given by Hypothesis B, such that $\rank_{n,X}=\sum Y_j$. It follows that $\rank_{n,X} = n-2j$ if and only if there are exactly $m(j)$ coordinates of $\vec{s}_E=(s_{E,1},\ldots,s_{E,{\lfloor n/2 \rfloor}})$ that are a $\MW$ symbol, if and only if there are exactly $m(j)$ indices $ 1\leq k_1 <\cdots < k_{m(j)} \leq \lfloor n/2 \rfloor$ such that $Y_{k_1}=\cdots=Y_{k_{m(j)}}=1$ and $Y_j=0$ for all other indices. If we fix one such $m(j)$-tuple of indices, then this occurs exactly when the random variable
 $$Y_{k_1}\cdot Y_{k_2}\cdots Y_{k_{m(j)}} \cdot \frac{\prod_{i=1}^{\lfloor n/2 \rfloor} (1-Y_i)}{(1-Y_{k_1})(1-Y_{k_2})\cdots (1-Y_{k_{m(j)}})}$$
 takes the value $1$. Finally, adding over all the possible $m(j)$-tuples  $(k_1,\ldots,k_{m(j)})$, we obtain the random variable equal to $Y_{\rk = n-2j}$, as in the statement.
 
 For the second part of the statement, if the random variables $Y_j$ are mutually uncorrelated, then  
 \begin{eqnarray*}
 	\mathbb{E}(Y_{\rk = n-2j}) & = &  \sum_{1\leq k_1<\cdots < k_{m(j)}\leq \lfloor n/2 \rfloor} \mathbb{E}(Y_{k_1})\cdots \mathbb{E}(Y_{k_{m(j)}}) \cdot \frac{\prod_{i=1}^{\lfloor n/2 \rfloor} (1-\mathbb{E}(Y_i))}{(1-\mathbb{E}(Y_{k_1}))\cdots (1-\mathbb{E}(Y_{k_{m(j)}}))}\\
 	&=&  \binom{\lfloor n/2 \rfloor}{m(j)} \rho_n(X)^{m(j)}(1-\rho_n(X))^j = \binom{\lfloor n/2 \rfloor}{j} \rho_n(X)^{m(j)}(1-\rho_n(X))^j,
 \end{eqnarray*}
 as claimed, where we have used the facts that (a) if  $\operatorname{Cov}(Y,Y')=0$ (or $\approx 0$), then $\mathbb{E}(YY')\cong \mathbb{E}(Y)\cdot \mathbb{E}(Y')$ and $\mathbb{E}(1-Y)=1-\mathbb{E}(Y)$, and (b) that  $\mathbb{E}(Y_j)=\rho_n(X)$.

 For (3), we can calculate the expected value $\mathbb{E}(Y_{\rk = n-2j})$ as follows:
 \begin{eqnarray*}
 \mathbb{E}(Y_{\rk = n-2j}) &=& \mathbb{E}\left(\sum_{1\leq k_1<\cdots < k_{m(j)}\leq \lfloor n/2 \rfloor} Y_{k_1}\cdot Y_{k_2}\cdots Y_{k_{m(j)}} \cdot \frac{\prod_{i=1}^{\lfloor n/2 \rfloor} (1-Y_i)}{(1-Y_{k_1})(1-Y_{k_2})\cdots (1-Y_{k_{m(j)}})}\right) \\
 &=& \sum_{1\leq k_1<\cdots < k_{m(j)}\leq \lfloor n/2 \rfloor} \mathbb{E}\left( Y_{k_1}\cdot Y_{k_2}\cdots Y_{k_{m(j)}} \cdot \frac{\prod_{i=1}^{\lfloor n/2 \rfloor} (1-Y_i)}{(1-Y_{k_1})(1-Y_{k_2})\cdots (1-Y_{k_{m(j)}})}\right)\\
 &=& \sum_{1\leq k_1<\cdots < k_{m(j)}\leq \lfloor n/2 \rfloor} \mathbb{E}_{m(j),j}^n (X) = \binom{\lfloor n/2 \rfloor}{j}\cdot \mathbb{E}_{m(j),j}^n (X),   
 \end{eqnarray*}
 where we have used equicorrelation of random variables $Y_j$ for the equality of the expected value of the product of any $m(j)$ random  variables, and parts (1) and (2) of Lemma \ref{lem-equicorr2}. 
 \end{proof}

Let us simplify the formulas of Corollary \ref{cor-hasse1} for $n=1,\ldots,5$.

\begin{cor}\label{cor-rankprobabilities}
For every $r\geq 0$, we define
$$\wrr_r^X = \{E=(X,n,\Sel_2)\in \wee^X : \rank(E)=r\}.$$
If we assume $H_B$, then the probabilities
$$p_n(r) = \operatorname{Prob}(E\in \wrr_r^X\ |\ E\in \wss_{n}^X) = \mathbb{E}(Y_{\rk = r})$$
for $n=1,\ldots,5$ and $0\leq r \leq n$ are given by the formulas in Table \ref{tab-probabilities}.

\begin{table}[h!]
\centering
\def\arraystretch{1.5}
\begin{tabular}{c|c c c c c}
$p_n(r)$ & $2$ & $3$ & $4$ & $5$ \\
\hline 
$r=0$  & $1-\rho_2(X)$ & $0$ & $(1-\rho_4(X))^2+C_{1,1}^4(X)$ & $0$\\
$1$  &  $0$ & $1-\rho_3(X)$ & $0$ & $(1-\rho_5(X))^2+C_{1,1}^5(X)$ \\
$2$  & $\rho_2(X)$ & $0$ & $2\rho_4(X)(1-\rho_4(X))-2C_{1,1}^4(X)$ & $0$ \\
$3$  &  & $\rho_3(X)$  &  $0$ & $2\rho_5(X)(1-\rho_5(X))-2C_{1,1}^5(X)$  \\
$4$  &  &  & $\rho_4(X)^2+C_{1,1}^4(X)$ & $0$\\
$5$  & &  & & $\rho_5(X)^2+C_{1,1}^5(X)$
\end{tabular}
\caption{Values of $p_n(r) = \operatorname{Prob}(E\in \wrr_r^X\ |\ E\in \wss_{n}^X)$
for $n=2,\ldots,5$ and $0\leq r \leq n$. Note that $p_1(0)=0$ and $p_1(1)=1$.}
\label{tab-probabilities}
\end{table}
\end{cor}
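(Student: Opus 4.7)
The plan is a direct book-keeping exercise: the main formula $\mathbb{E}(Y_{\operatorname{rk}=n-2j}\mid S)=\binom{\lfloor n/2\rfloor}{j}\cdot\mathbb{E}_{m(j),j}^n(X)$ of Corollary \ref{cor-hasse1}(3) already reduces the problem to computing the three quantities $\mathbb{E}_{0,t}^n$, $\mathbb{E}_{s,0}^n$, and $\mathbb{E}_{1,1}^n$ for $s,t\leq 2$, and Lemma \ref{lem-equicorr3} gives explicit recursions for all of them.

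First, I would dispose of the case $n=1$ by appealing to Remark \ref{rem-rank1}: here $\lfloor n/2\rfloor=0$, so $\Sel_2(E)$ is empty, $\rank(E)=(n\bmod 2)=1$ is forced, and therefore $p_1(0)=0$ and $p_1(1)=1$. For $n\geq 2$, I would translate between $\rank(E)$ and $\rank_{n,X}(s)$ using Definition \ref{defn-rank}: the identity $\rank(E)=(n\bmod 2)+2\rank_{n,X}(s)$ shows that $p_n(r)=0$ unless $r\equiv n\pmod 2$, which accounts for all the zero entries in Table \ref{tab-probabilities}. Writing the nonzero ranks as $r=n-2j$ with $j\in\{0,1,\ldots,\lfloor n/2\rfloor\}$, Corollary \ref{cor-hasse1}(3) gives
$$p_n(n-2j)=\binom{\lfloor n/2\rfloor}{j}\cdot\mathbb{E}_{m(j),j}^n(X),\qquad m(j)=\lfloor n/2\rfloor-j.$$

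The remaining step is to evaluate $\mathbb{E}_{m(j),j}^n(X)$ by Lemma \ref{lem-equicorr3}. For $n=2,3$ only the trivial values $\mathbb{E}_{1,0}^n=\rho_n(X)$ and $\mathbb{E}_{0,1}^n=1-\rho_n(X)$ are needed, giving $p_2(0)=1-\rho_2(X)$, $p_2(2)=\rho_2(X)$, $p_3(1)=1-\rho_3(X)$, $p_3(3)=\rho_3(X)$. For $n=4,5$ I would compute:
\begin{align*}
\mathbb{E}_{2,0}^n &= \mathbb{E}_{1,0}^n\cdot\mathbb{E}_{1,0}^n+C_{1,1}^n(X)=\rho_n(X)^2+C_{1,1}^n(X),\\
\mathbb{E}_{0,2}^n &= \mathbb{E}_{0,1}^n\cdot\mathbb{E}_{0,1}^n - \textstyle\sum_{i=1}^{1}(-1)^i\binom{1}{i}C_{1,i}^n(X)=(1-\rho_n(X))^2+C_{1,1}^n(X),\\
\mathbb{E}_{1,1}^n &= \mathbb{E}_{1,0}^n\cdot\mathbb{E}_{0,1}^n + \textstyle\sum_{i=1}^{1}(-1)^i\binom{1}{i}C_{1,i}^n(X)=\rho_n(X)(1-\rho_n(X))-C_{1,1}^n(X),
\end{align*}
by the formulas of Lemma \ref{lem-equicorr3}(2)--(4). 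Multiplying by the appropriate binomial coefficients ($\binom{2}{0}=\binom{2}{2}=1$ for $j=0,2$, and $\binom{2}{1}=2$ for $j=1$) yields the six nonzero entries in the $n=4,5$ columns of Table \ref{tab-probabilities}.

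There is no genuine obstacle here; the only place where care is needed is the bookkeeping in the case $n=4,5$, $r=2,3$, where the factor of $2$ comes from the binomial coefficient $\binom{\lfloor n/2\rfloor}{j}=\binom{2}{1}$ and then distributes across both the $\rho_n(1-\rho_n)$ term and the covariance correction $-C_{1,1}^n(X)$ supplied by Lemma \ref{lem-equicorr3}(4). Verifying that $\sum_r p_n(r)=1$ for each $n$ (the covariance corrections telescope to zero across the ranks of a given parity) is a quick sanity check that can be recorded to close the proof.
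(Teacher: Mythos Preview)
Your proposal is correct and follows essentially the same route as the paper. The only cosmetic difference is that for $n=4,5$ the paper writes out $Y_{\rk=n-2j}$ explicitly from Corollary~\ref{cor-hasse1}(1) and computes expectations directly via $\mathbb{E}(Y_1Y_2)=\mathbb{E}(Y_1)\mathbb{E}(Y_2)+\Cov(Y_1,Y_2)$, whereas you invoke part (3) and the packaged recursions of Lemma~\ref{lem-equicorr3}; these are the same computation in different notation.
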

\begin{proof}
If $E=(X,n,\Sel_2)$ is a test elliptic curve with Selmer rank $n=1$, then $\Sel_2=()$ is empty, and $\rank(E)=0$ by definition. Thus, $p_1(0)=0$ and $p_1(1)=1$. When $n=2$ or $3$, then $\Sel_2(E)=(s_{E,1})$, so there is a unique random variable $Y_1(\Sel_2(E))=Y_{\text{Hasse},n,X}(s_{E,1})$, with mean $\rho_n(X)$, such that $\rank_{1,X} = Y_1$. It follows that $p_n(n)=\rho_n(X)$ and $p_n(n-2)=1-\rho_n(X)$, and $p_n(r)=0$ for $r\neq n-2,n$.

 Finally, if $n=4,5$, then $\rank_{n,X}=Y_1+Y_2$, and Corollary \ref{cor-hasse1} says that
 $$Y_{\rk=n} = Y_1Y_2,\ Y_{\rk=n-2}=Y_1(1-Y_2)+(1-Y_1)Y_2,\ Y_{\rk=n-4}=(1-Y_1)(1-Y_2),$$
 with expected value, respectively, given by
 \begin{eqnarray*}
 \mathbb{E}(Y_{\rk=n})&=&\mathbb{E}(Y_1)\mathbb{E}(Y_2)+\operatorname{Cov}(Y_1,Y_2)=\rho_n(X)^2+C_{1,1}^n(X),\\
 \mathbb{E}(Y_{\rk=n-2})&=&\mathbb{E}(Y_1)(1-\mathbb{E}(Y_2))-\operatorname{Cov}(Y_1,Y_2)+(1-\mathbb{E}(Y_1))\mathbb{E}(Y_2)-\operatorname{Cov}(Y_1,Y_2)\\
 &=&2\rho_n(X)(1-\rho_n(X))-2C_{1,1}^n(X),\\ 
  \mathbb{E}(Y_{\rk=n-4})&=&(1-\mathbb{E}(Y_1))(1-\mathbb{E}(Y_2))+\operatorname{Cov}(Y_1,Y_2)=(1-\rho_n(X))^2+C_{1,1}^n(X),
\end{eqnarray*}
where we have used the equality $\mathbb{E}(Y_1Y_2)=\mathbb{E}(Y_1)\mathbb{E}(Y_2)+\operatorname{Cov}(Y_1,Y_2)$ and the fact that the covariance satisfies  $\operatorname{Cov}(a+bY_1,c+dY_2)=bd\operatorname{Cov}(Y_1,Y_2)$ for constants $a,b,c,d$.
\end{proof}

\begin{remark}
The formulas for the expected value of $Y_{\rk=n-2j}$ for $n\geq 6$, unfortunately, cannot be written just in terms of $\mathbb{E}(Y_i)$ and $C_{1,1}^n(X)=\operatorname{Cov}(Y_i,Y_j)$ for $i\neq j$. One needs to know other higher moments of the random variables $Y_i$. For instance, let $n=6$. Then,
\begin{eqnarray*}\mathbb{E}(Y_{\rk=6}) &=&\mathbb{E}(Y_1Y_2Y_3)=\mathbb{E}(Y_1Y_2)\mathbb{E}(Y_3)+\operatorname{Cov}(Y_1Y_2,Y_3)\\
&=&\mathbb{E}(Y_1)\mathbb{E}(Y_2)\mathbb{E}(Y_3)+\operatorname{Cov}(Y_1,Y_2)\mathbb{E}(Y_3)+\operatorname{Cov}(Y_1Y_2,Y_3)\\
&=& \rho_6(X)^3+C_{1,1}^6(X)\rho_6(X)+C_{2,1}^6(X).
\end{eqnarray*}
The formulae for $\mathbb{E}(Y_{\rk=6-2j})$ can be written in terms of the functions $\rho_6(X)$, $C_{1,1}^6$, and $C_{2,1}^6(X)$. For example,
\begin{eqnarray*}\mathbb{E}(Y_{\rk=4}) &=&\mathbb{E}(Y_1Y_2(1-Y_3))+\mathbb{E}(Y_1(1-Y_2)Y_3)+\mathbb{E}((1-Y_1)Y_2Y_3)=3\cdot \mathbb{E}(Y_1Y_2(1-Y_3))\\
&=& 3(\mathbb{E}(Y_1Y_2)\mathbb{E}(1-Y_3)+\operatorname{Cov}(Y_1Y_2,1-Y_3))\\
&=& 3(\mathbb{E}(Y_1Y_2)(1-\mathbb{E}(Y_3))-\operatorname{Cov}(Y_1Y_2,Y_3))\\
&=& 3((\mathbb{E}(Y_1)\mathbb{E}(Y_2)+\Cov(Y_1,Y_2))(1-\mathbb{E}(Y_3))-\operatorname{Cov}(Y_1Y_2,Y_3))\\
&=& 3(\rho_6(X)^2(1-\rho_6(X))+C_{1,1}^6(X)(1-\rho_6(X))-C_{2,1}^6(X)).
\end{eqnarray*}
\end{remark}

We conclude this section with an estimate of the growth (decay) of the expected values $\mathbb{E}(Y_{\operatorname{rk} = n-2j})$.

\begin{lemma} For each $0\leq j \leq \lfloor n/2 \rfloor$  we have that 
$$\mathbb{E}(Y_{\operatorname{rk} = n-2j}) = \binom{\lfloor n/2 \rfloor}{j}\rho_{n}(X)^{\lfloor n/2 \rfloor-j}(1-\rho_{n}(X))^{j}= \begin{cases}
O\left(X^{-f_n\cdot (\lfloor n/2\rfloor -j)}\right) & \text{ if } j\neq \lfloor n/2 \rfloor,\\
1+O\left(X^{-f_n}\right) & \text{ if } j=\lfloor n/2 \rfloor.
\end{cases}$$
\end{lemma}
\begin{proof} Since $\rho_n(X) = O(X^{-f_n})$, we have for $0\leq j \leq \lfloor n/2 \rfloor$ that 
	\begin{align*} \mathbb{E}(Y_{\operatorname{rk} = n-2j}) &= \binom{\lfloor n/2 \rfloor}{j}\rho_{n}(X)^{\lfloor n/2 \rfloor-j}(1-\rho_{n}(X))^{j}\\
	& = \binom{\lfloor n/2 \rfloor}{j}O(X^{-f_n})^{\lfloor n/2 \rfloor-j}(1-O(X^{-f_n}))^{j} \\
	&= \begin{cases}
	O\left(X^{-f_n\cdot (\lfloor n/2\rfloor -j)}\right) & \text{ if } j\neq \lfloor n/2 \rfloor,\\
	1-O\left(X^{-f_n}\right) & \text{ if } j=\lfloor n/2 \rfloor.
	\end{cases} 
	\end{align*} 
\end{proof}

\subsection{Refining Hypothesis B}\label{sec-refineb}

 As we did for Hypothesis A, in order to refine Hypothesis B, we shall use the sequence $(\E^X)_{X\geq 1}$ of ordinary elliptic curves as a representative of $\TT$ (see Remarks \ref{rem-testellipticcurve} and \ref{rem-actualellipticcurves}) to come up with candidates for our functions $\rho_n(X)$.  In order to estimate the values of $\rho_n(X)$, we define the following moving ratio measuring the failure of the Hasse principle for $2$-Selmer elements coming from elliptic curves of Selmer rank $n$ and up to height $X$.

\begin{defn}\label{defn-rhonx}
Let $\wtt \in {\bf T}$ be an arbitrary sequence. For each $n\geq 2$, and $N\geq 0$, we define the average failure of the Hasse principle for test Selmer elements in the height interval $(X,X+N]$ by
\begin{eqnarray*}
\rho_n(X,N) &=& 
\frac{\sum_{E\in\wts_n((X,X+N])} (\rank(E)-(n\bmod 2))}{\sum_{E\in \wts_n((X,X+N])} (\selrank(E)-(n\bmod 2))} \\
&=& \frac{\sum_{E\in\wts_n((X,X+N])} (\rank(E))-(n\bmod 2))}{(n-(n\bmod 2))\cdot \pi_{\wts_n}((X,X+N])}.
\end{eqnarray*}
\end{defn}

\begin{cor}\label{cor-travelratio2}
Let $\wtt \in {\bf T}$ be an arbitrary sequence, and assume Hypothesis B. Then, the expected value of $\rho_n(X,N)$ is given by $\rho_n(X)+O(X^{-f_n})$. Moreover, the standard error is given by
\begin{eqnarray*}  \sqrt{ \frac{\rho_n(X)(1-\rho_n(X))+ (\lfloor n/2 \rfloor -1)C_{1,1}^n(X) + O(X^{-f_n})}{\pi_{\wts}((X,X+N])}}\end{eqnarray*}
where $C_{1,1}^n(X)=0$ for $n=2,3$, and we assume here that $C_{1,1}^n(X)=c_n+O(X^{-f_n})$ for some constant $c_n$. 
\end{cor}
\begin{proof}
The expected value of $\rho_n(X,N)$ is given by
\begin{align*}
\mathbb{E}(\rho_n(X,N)) &= \mathbb{E}\left(\frac{\sum_{E\in\wts_n((X,X+N])} (\rank(E))-(n\bmod 2))}{(n-(n\bmod 2))\cdot \pi_{\wts_n}((X,X+N])}\right)\\
& = \frac{1}{\lfloor n/2 \rfloor} \cdot \mathbb{E}\left(\sum_{H=X+1}^{X+N} \sum_{E\in\wts_n((H,H])} \frac{1}{\pi_{\wts_n}((X,X+N])} \cdot \frac{\rank(E)-(n\bmod 2)}{2}  \right)\\
& = \sum_{H=X+1}^{X+N} \frac{\pi_{\wts_n}((H,H])}{\pi_{\wts_n}((X,X+N])} \cdot \rho_n(H).
\end{align*}
by Theorem \ref{thm-ranksbinomial}.  Putting $\alpha_H = \pi_{\wtt}([H,H])/\pi_{\wtt}((X,X+N])$, we note that $\sum_{H=X+1}^{X+N} \alpha_H = 1$. Since $\rho_n(X) =  O(X^{-f_n})$ by Hypothesis \ref{hypb}, now Lemma \ref{lem-sumofthetas} shows that $\mathbb{E}(\rho_n(X,N))=\rho_n(X) + O(X^{-f_n})$, as claimed. 

$$\operatorname{Var}\left(\frac{1}{m}\sum_{i=1}^m Y_{\Sel,n,X_i}(E_i) \right) = \frac{1}{m^2}\sum_{i=1}^m \operatorname{Var}(Y_{\Sel,n,X_i}(E_i))=\frac{1}{m^2}\sum_{i=1}^m \theta_n(X_i)(1-\theta_n(X_i)),$$

Similarly, the variance $\operatorname{Var}(\rho_n((X,X+N]))$ is given by
\begin{eqnarray*}
	 &=& \frac{1}{(\pi_{\wts}((X,X+N]))^2}\sum_{E\in\wts((X,X+N])} \operatorname{Var}\left(\frac{1}{\lfloor n/2 \rfloor} \cdot \frac{\rank(E)-(n\bmod 2)}{2}\right)\\
	&=& 
	\frac{1}{(\pi_{\wts}((X,X+N]))^2}\sum_{E\in\wts((X,X+N])} \rho_n(\h(E))(1-\rho_n(\h(E)))+(\lfloor n/2 \rfloor -1)C_{1,1}^n(\h(E))\\
	&=& \frac{1}{\pi_{\wts}((X,X+N])}\sum_{H=X+1}^{X+N} \frac{\pi_{\wts}([H,H])}{\pi_{\wts}((X,X+N])}\cdot  (\rho_n(H)(1-\rho_n(H))+(\lfloor n/2 \rfloor -1)C_{1,1}^n(H))\\
	&=&  \frac{1}{\pi_{\wts}((X,X+N])}\cdot (\rho_n(X)\cdot (1-\rho_n(X))+(\lfloor n/2 \rfloor -1)C_{1,1}^n(X)) + O(X^{-f_n})),
\end{eqnarray*}
by Lemma \ref{lem-sumofthetas} and Corollary \ref{cor-hasseaverank}, and we have assumed that $C_{1,1}^n(X)=c_n+O(X^{-f_n})$ for some constant $c_n$. Thus, the standard error is as claimed in the statement. 
\end{proof}

We have used the BHKSSW data to estimate probability function $\rho_n(X)$ using the  moving ratios $\displaystyle \rho_n(X,N)$ of Corollary \ref{cor-travelratio2}. We have plotted values of  $\rho_n(X,0.25\cdot 10^9)$ for $n=2,\ldots,5$ using the BHKSSW database, and the graphs can be found in Figure \ref{fig-rhonx}. Recall that we have assumed in $H_B$ that $\lim_{X\to \infty} \rho_n(X)=0$. We will see below that a ``best-fit'' model of the data and graphs depicted here support this assumption, though the convergence to $0$ is very slow.

\begin{figure}[h!]
\includegraphics[width=6.6in]{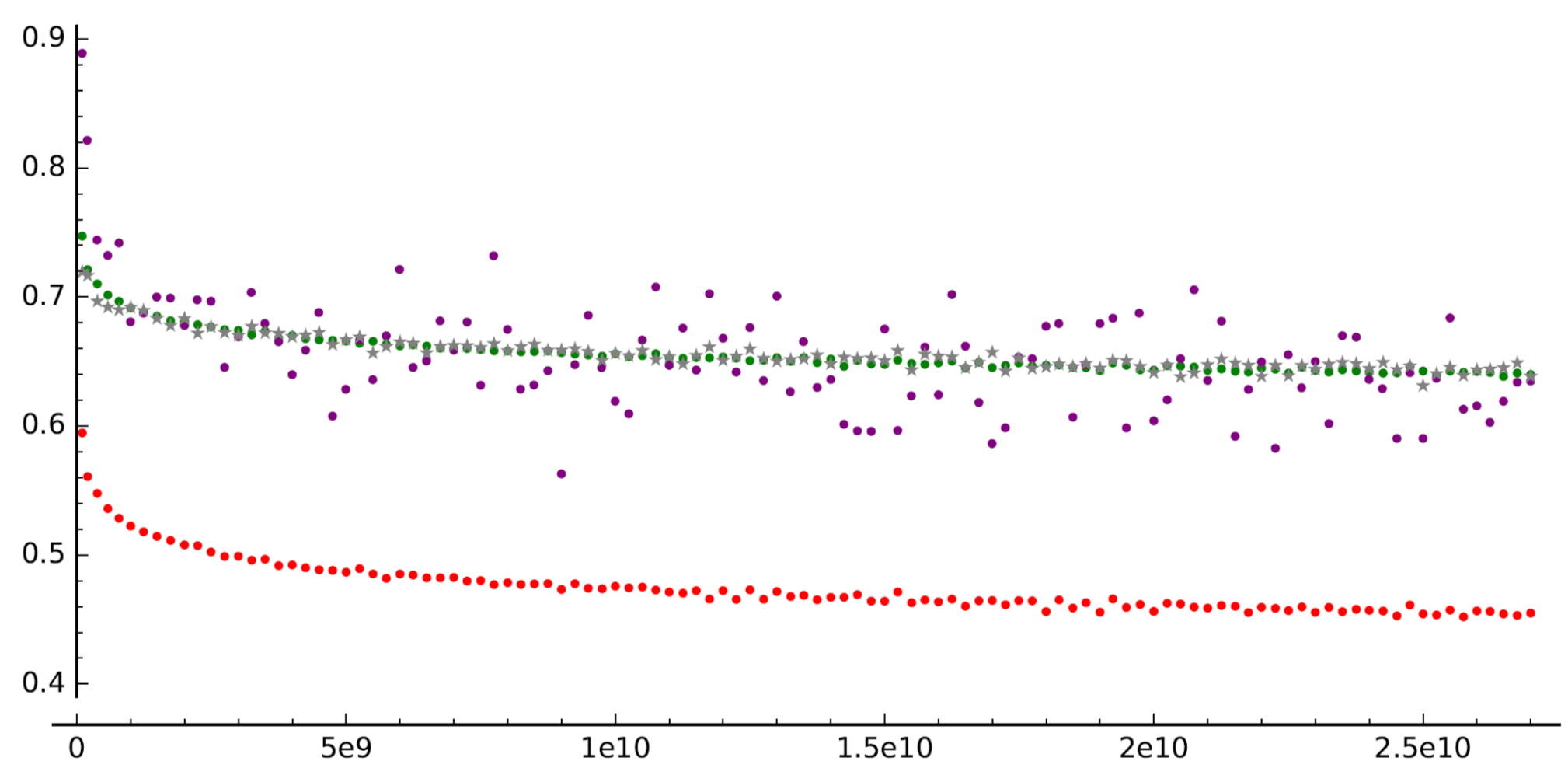}
\caption{Graphs of the moving ratios $\rho_n(X,0.25\cdot 10^9)$ for $n=2$ (green), $3$ (red), $4$ (gray stars), $5$ (purple).}
\label{fig-rhonx}
\end{figure}
In Table \ref{tab-hasseave} we record the last values of $\rho_n(X,0.25\cdot 10^9)$ that appear in the graphs (which correspond to $X\approx 2.675\cdot 10^{10}$). We also record the values of $\pi_{\s_n}$ in $[2.675\cdot 10^{10},2.7\cdot 10^{10}]$. The total number of elliptic curves in the same interval is $\num[group-separator={,}]{1828235}$.

\begin{table}[h!]
\centering
\def\arraystretch{1.5}
\begin{tabular}{c||c|c|c|c}
\hline
$n$ & $2$ & $3$ & $4$ & $5$\\
\hline 
$\pi_{\s_n}([2.675\cdot 10^{10},2.7\cdot 10^{10}])$ & $\num[group-separator={,}]{476579}$ & $\num[group-separator={,}]{104922}$ & $\num[group-separator={,}]{7945}$ & $152$\\
\hline 
$\rho_n(2.675\cdot 10^{10},0.25\cdot 10^9)$ & 
 $0.63989181$ & $0.45496654$ & $0.63857772$ & $0.63486842$\\
 \hline 
\end{tabular}
\caption{The number of curves of Selmer rank $2\leq n\leq 5$, and the values of $\rho_n(X,N)$ in the interval $[2.675\cdot 10^{10},2.7\cdot 10^{10}]$.}
\label{tab-hasseave}
\end{table} 

Finally, we have found (using SageMath) best-fit models for the data of $\rho_n(X,N)$ of the form 
$$\rho_n(X,N) \approx \frac{D_n}{X^{f_n}}.$$
and we provide the values of $D_n$ and $f_n$ in Table \ref{tab-rhomodel}. We have compared the models with the data in Figure \ref{fig-rhonx2}. 

\begin{table}[h!]
\centering
\def\arraystretch{1.5}
\begin{tabular}{c||c|c|c|c}
\hline
$n$ &   $2$ & $3$ & $4$ & $5$\\
\hline 
$D_n$ &  $1.12465347$ & $1.30937016$ & $1.07928016$ & $1.79161787$\\
 \hline
 $f_n$ & $0.02344245$ & $0.04412662$ & $0.02158211$ & $0.04383626$ \\
 \hline 
\end{tabular}
\caption{The coefficients of the best-fit models $\rho_n(X,N)\approx D_n/X^{f_n}$.}
\label{tab-rhomodel}
\end{table} 

\begin{figure}[h!]
\includegraphics[width=6.6in]{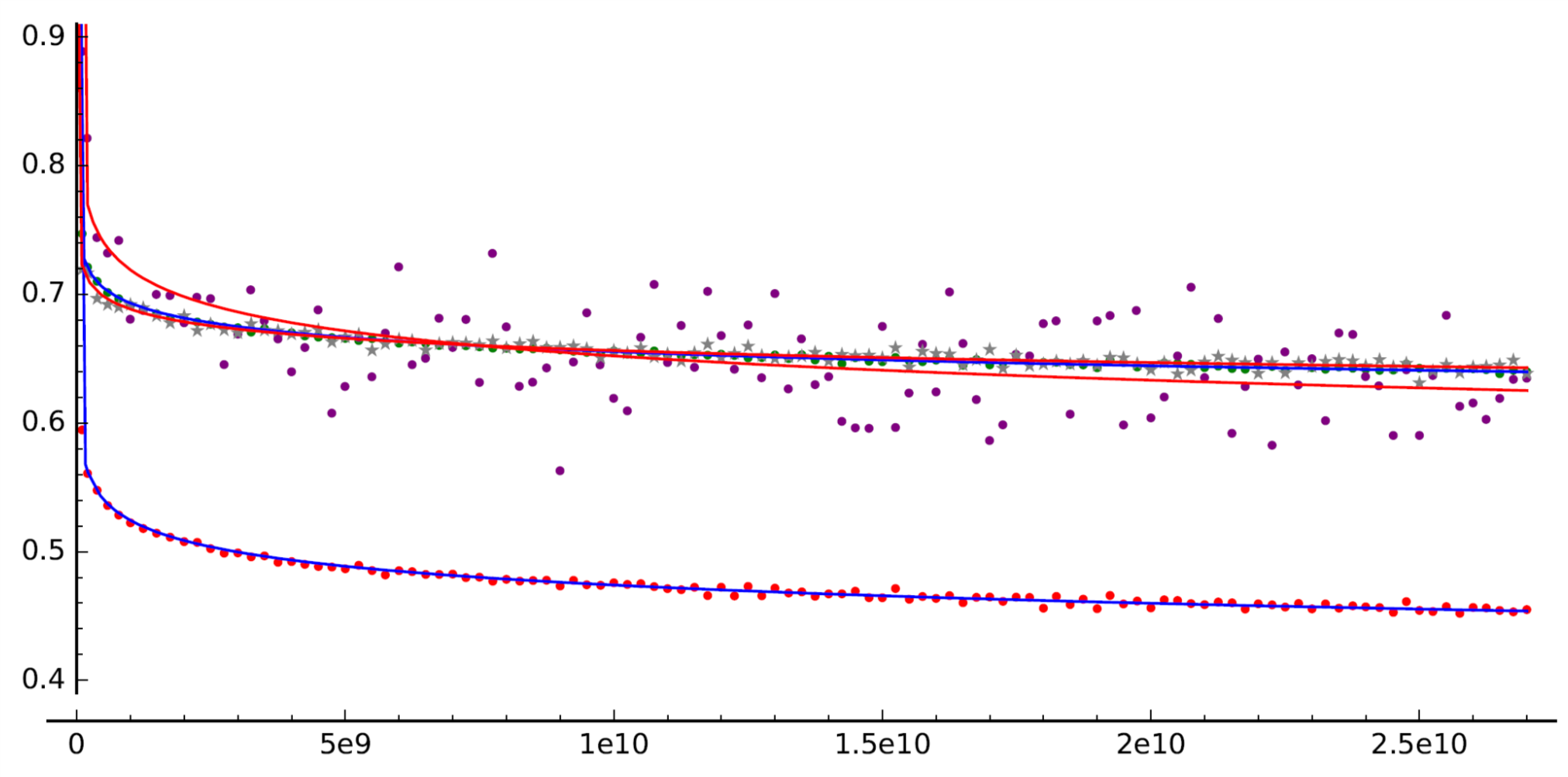}
\caption{Graphs of the moving ratios $\rho_n(X,0.025\cdot 10^9)$ for $n=2$ (green), $3$ (red), $4$ (gray stars), $5$ (purple), and the corresponding models of the form $D_n/X^{f_n}$ (in blue for $n=2,3$ and red for $n=4,5$).}
\label{fig-rhonx2}
\end{figure}

\begin{hypa}[Hypothesis $H_B'$]\label{conj-hasseratio}
Hypothesis $H_B$ holds and, for every $n\geq 2$, there are constants $D_n$ and $f_n$ such that $\displaystyle \rho_n(X)= \frac{D_n}{X^{f_n}}.$ Moreover, for $n=2,\ldots,5$ the values of  $D_n$ and $f_n$ are approximately as given in Table \ref{tab-rhomodel}.  In addition, for $i,j\geq 0$ (not both zero), we shall assume that $\mathbb{E}_{i,j}^n(X)$  is a continuous function with $$\mathbb{E}_{i,j}^n(X) = \begin{cases}
O(X^{-f_n\cdot i}) & \text{ if } i>0,\\
1-O(X^{-f_n}) & \text{ if } i=0 \text{ and } j>0.
\end{cases} $$
Further, we shall assume that there is a constant $C_{\mathbb{E}}>0$ that does not depend on $n$, such that $|\frac{d}{dX}(\mathbb{E}_{i,j}^n(X))| \leq C_{\mathbb{E}}/X^{f_n+1}$.
\end{hypa}

\begin{remark} \label{rem-growthcovariance}
In particular, by Lemma \ref{lem-equicorr3}, we have  $$\mathbb{E}_{1,1}^n (X) = \mathbb{E}_{1,0}^n (X)\cdot \mathbb{E}_{0,1}^n(X) -C_{1,1}^n(X) = \rho_n(X)(1-\rho_n(X)) - C_{1,1}^n(X),$$ and so, if we assume Hypothesis \ref{conj-hasseratio}, then
$$C_{1,1}^n(X) = \mathbb{E}_{1,1}^n (X) - \rho_n(X)(1-\rho_n(X)) = O(X^{-f_n}).$$
\end{remark} 

\begin{remark} \label{rem-covariance}
Before we can discuss the standard error in the approximation $\rho_n(X,N)\approx\rho_n(X)$ we need to estimate the covariance functions $C_{s,t}^n(X)$. This can be done via the formulas for the expected value of $Y_{\operatorname{rk} = n-2j}$ given by Corollary \ref{cor-hasse1} and, for $n=1,2,3,4,5$, the simplified formulas given by Corollary \ref{cor-rankprobabilities}. The first thing to note is that for $n=1,2,3$, we have $C_{s,t}^n(X)=0$ for all possible values of $s,t$ since there is either none ($n=1$) or only one random variable $Y_1$ that intervenes ($n=2,3$). For $n=4$ and $5$ there are two random variables $Y_1$ and $Y_2$ and
$$C_{1,1}^n(X) =  \mathbb{E}(Y_{\operatorname{rk} = n}) - \mathbb{E}(Y_1)\mathbb{E}(Y_2)=\mathbb{E}(Y_{\operatorname{rk} = n}) - \rho_n(X)^2. $$
In Figures \ref{fig-cov4} and \ref{fig-cov5} we have plotted approximate covariance values of $C_{1,1}^4(X)$ and $C_{1,1}^5(X)$, respectively, using sample height intervals $[X,X+0.25\cdot 10^9]$, together with the best linear fits for the data which are given by
$$-0.02677186+(4.06113344\cdot 10^{-14})x\ \text{ and }\ -0.01328180+(1.28980002\cdot 10^{-12})x,$$
respectively. In particular, we observe that $|C_{1,1}^4(X) -(- 0.025)|\lessapprox 0.015$ and $|C_{1,1}^5(X)-0|\lessapprox 0.1$. Thus, below, we will approximate $C_{1,1}^4(X)\approx -0.025$ and $C_{1,1}^5(X)\approx 0$.
\begin{figure}[h!]
\includegraphics[width=6.6in]{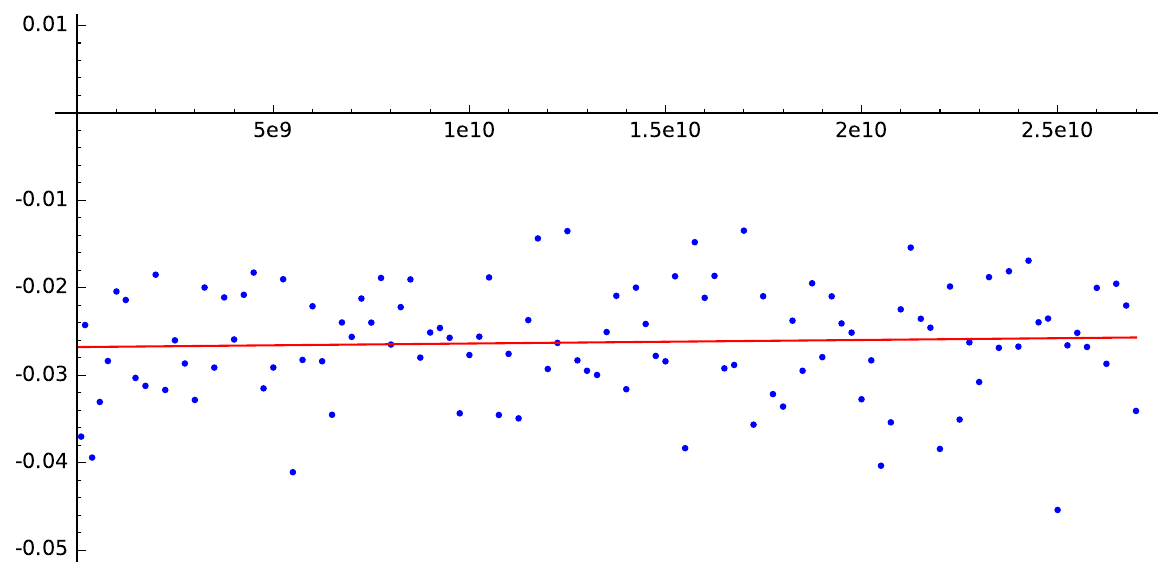}
\caption{Approximate values of $C_{1,1}^4(X)$ using sample height intervals $[X,X+0.25\cdot 10^9]$ to estimate $\mathbb{E}(Y_{\operatorname{rk} = 4}) - \rho_4(X)^2$ using the family of elliptic curves. The best-fit line is given by $-0.02677186+(4.06113344\cdot 10^{-14})x$.}
\label{fig-cov4}
\end{figure}

\begin{figure}[h!]
\includegraphics[width=6.6in]{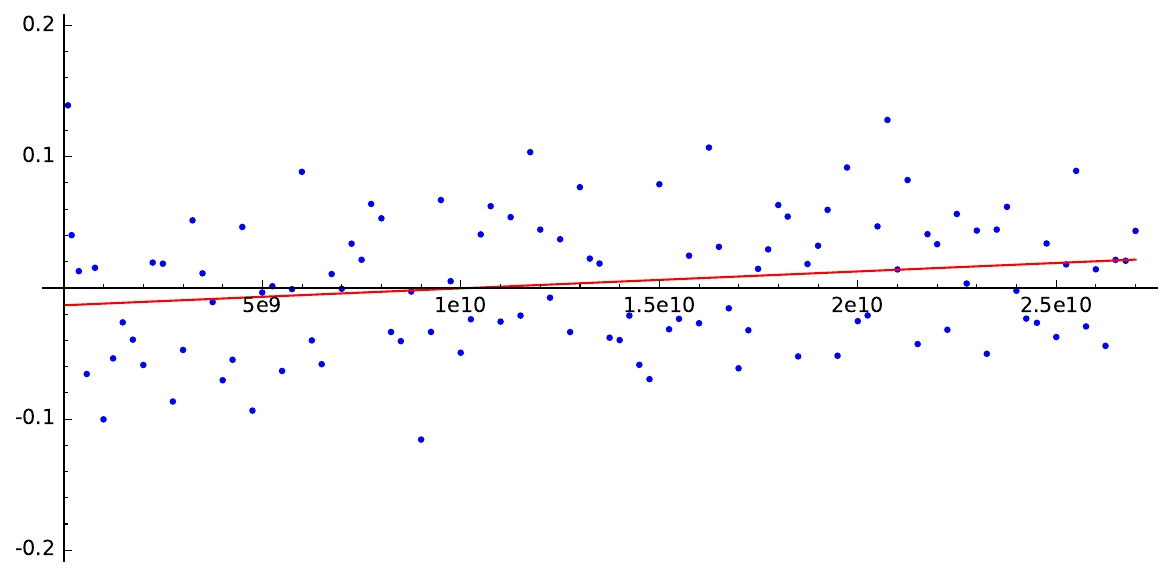}
\caption{Approximate values of $C_{1,1}^5(X)$ using sample height intervals $[X,X+0.25\cdot 10^9]$ to estimate $\mathbb{E}(Y_{\operatorname{rk} = 5}) - \rho_5(X)^2$ using the family of elliptic curves. The best-fit line is given by $-0.01328180+(1.28980002\cdot 10^{-12})x$.}
\label{fig-cov5}
\end{figure}
\end{remark} 

\begin{remark}
Let us assume Hypothesis \ref{conj-hasseratio}, and let us use Corollary \ref{cor-travelratio2} to estimate the standard error in the approximation $\rho_n(X)\approx \rho_n(X,N)$. The error should be given by the expression
$$\text{err}_{1,n}(X,N)=\sqrt{\frac{ \rho_n(X)(1-\rho_n(X))+ (\lfloor n/2 \rfloor -1)C_{1,1}^n(X)}{\pi_{\s_n}((X,X+N])}}$$
or by the expression
$$\text{err}_{2,n}(X,N)=   \sqrt{\cfrac{6X^{1/6}(1+C_nX^{-e_n})(\rho_n(X)(1-\rho_n(X))+ (\lfloor n/2 \rfloor -1)C_{1,1}^n(X))}{5\kappa s_n N} },$$
if we assume $H_A$ and Hypothesis \ref{conj-selmerratio} also. Using our calculations of Remark \ref{rem-covariance}, we will take $C_{1,1}^n(X)=0$ for $n=2,3$, and $C_{1,1}^4(X)=-0.025$, and $C_{1,1}^5(X)=0$.  In Table \ref{tab-errors2} we include the values of: $\rho_n(X,N)$, our model of $\rho_n(X)$, the error of the model $|\rho_n(X,N)-\rho_n(X)|$, and the predicted standard errors $\text{err}_{i,n}(X,N)$, for $i=1,2$, and $X=2.675\cdot 10^{10}$, with $N=0.25\cdot 10^9$.
\begin{table}[h!]
\centering
\def\arraystretch{1.5}
\begin{tabular}{c||c|c|c|c}
\hline 
$n$ & $2$ & $3$ & $4$ & $5$\\
\hline 
$\pi_{\s_n}([2.675\cdot 10^{10},2.7\cdot 10^{10}])$ & $\num[group-separator={,}]{476579}$ & $\num[group-separator={,}]{104922}$ & $\num[group-separator={,}]{7945}$ & $152$\\
\hline 
$\rho_n(2.675\cdot 10^{10},0.25\cdot 10^9)$ & 
 $0.63989181$ & $0.45496654$ & $0.63857772$ & $0.63486842$\\
 \hline
 $\rho_n(2.675\cdot 10^{10})$  & $0.63996477$ & $0.45404630$ & $0.64309203$ & $0.62550968$ \\
 \hline 
 $|\text{Error}|=|\rho_n(X,N) - \rho_n(X)|$  & $0.00007296$ & $0.00092023$ & $0.00451431$ & $0.00935873$\\
 \hline 
 $\text{err}_{1,n}(2.675\cdot 10^{10},0.25\cdot 10^9)$  & $0.00069531$ & $0.00153707$ & $0.00717531$ & $0.05551757$\\
 \hline 
 $\text{err}_{2,n}(2.675\cdot 10^{10},0.25\cdot 10^9)$  & $0.00069208$ & $0.00152440$ & $0.00700827$ & $0.05462609$\\
  \hline
\end{tabular}
\caption{Values of: $\rho_n(X,N)$, our model of $\rho_n(X)$, the error $|\rho_n(X,N)-\rho_n(X)|$, and the two predicted standard errors $\text{err}_{i,n}(X,N)$, for $i=1,2$, and $X=2.675\cdot 10^{10}$, $N=0.25\cdot 10^9$.}
\label{tab-errors2}
\end{table}
\end{remark} 

\begin{remark} 
As we can see from the errors in Table \ref{tab-errors2}, we seem to have insufficient data for $n=5$, so our models of $\rho_5(X)$ are not as accurate as we would wish. Indeed, the estimated standard error is $\approx 0.05$ and the value of $\rho_n(2.675\cdot 10^{10})\approx 0.62$, so the error here is about $10\%$.
\end{remark}

\begin{remark}
As we have mentioned earlier in Remark \ref{rem-sellargeheight} the BHKSSW database (\cite{BHKSSW}) also includes small databases of random samples of elliptic curves at larger heights. In order to test $H_B$ and Hypothesis \ref{conj-hasseratio}, we have calculated the average Hasse ratio for the curves in $\E_k$ (with notation as in Remark \ref{rem-sellargeheight}), and have plotted the ratios together with our models for $\rho_n(X)$, in Figure \ref{fig-rhonx3} (note: the $x$-axis is in logarithmic scale). We have also computed the predicted errors (a calculation similar to that carried out in Table \ref{tab-errors2}) and the predictions seem to match the data in large heights, as well.
\begin{figure}[h!]
\includegraphics[width=6.6in]{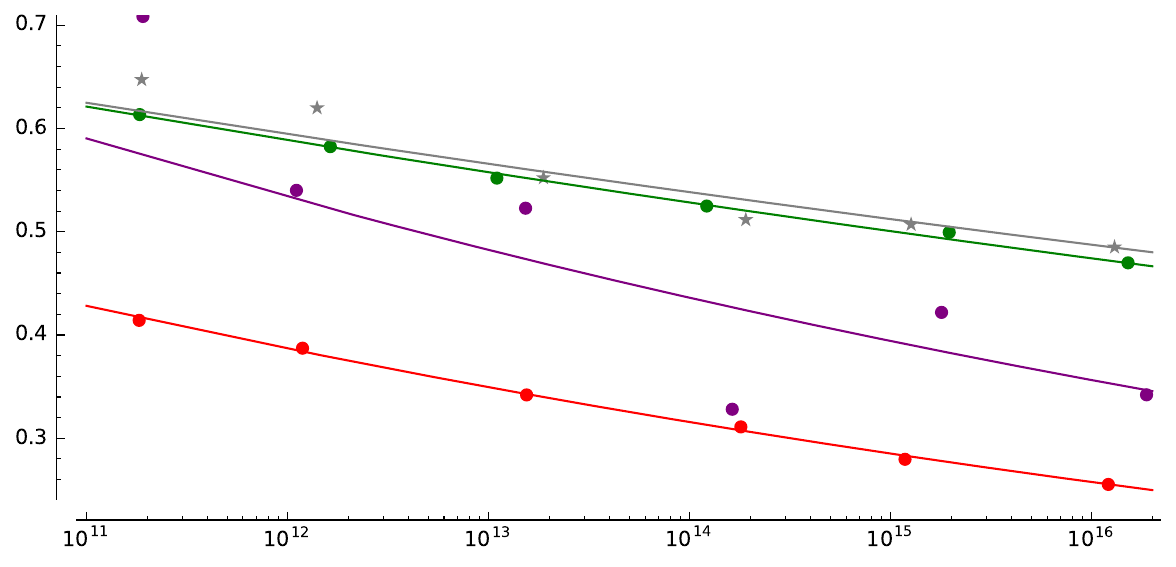}
\caption{Graphs of the moving ratios $\rho_n(X,N)$ for the curves of large height in the database BHKSSW, for $n=2$ (green), $3$ (red), $4$ (gray stars), $5$ (purple), and the corresponding models of the form $D_n/X^{f_n}$. The $x$-axis is in logarithmic scale.}
\label{fig-rhonx3}
\end{figure}
\end{remark}

\begin{remark} \label{rem-j1728}
It would be interesting to compute the ratio $\rho_n(X)$ in families of quadratic twists. However, such families are very ``thin'' in the family of all elliptic curves, and the convergence of the Hasse ratios to $\rho_n(X)$ would be unreliable. In order to provide some data in this direction, we have calculated the Selmer rank and Mordell--Weil rank in a family of twists (quadratic and quartic) of $y^2=x^3+x$. More precisely, we consider the curves $E_A:y^2=x^3+Ax$, with fourth-power-free $1\leq A\leq 10^6$ (curves up to height $4\cdot 10^{18}$). Then, we have calculated the moving ratios $\rho_n$ in slices of $\num[group-separator={,}]{10000}$ curves, and graphed them against the models of Hypothesis \ref{conj-hasseratio}. See Figure \ref{fig-rhonx1728}. Note, however, that we do not expect the exact same behavior in this family, since $j(E_A/\Q)=1728$ is fixed, and therefore it is a family of twists (quadratic and quartic). It is likely that if $H_B$ holds, then a similar condition is true for $j=1728$ up to a constant. That is, we may have $\rho_{n,1728}(X)\approx C_{1728}\cdot \rho_n(X)$, where $C_{1728}$ is a fixed constant. At any rate, the family of curves with $j=1728$ is very sparse within the family of all curves, and the data only indicates some consistency with our expectations.

\begin{figure}[h!]
\includegraphics[width=6.6in]{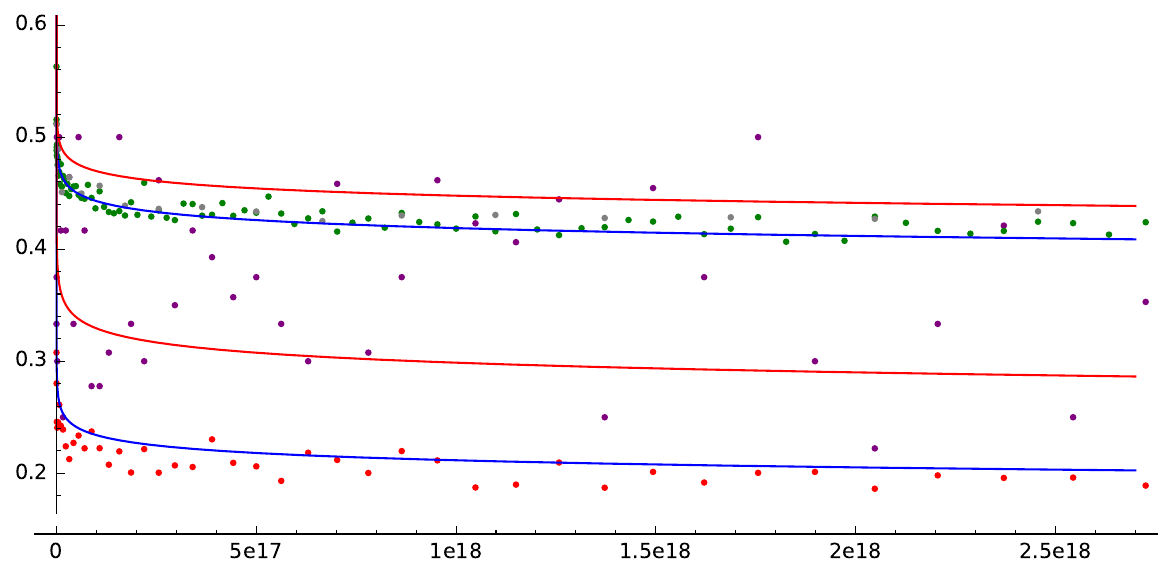}
\caption{Graphs of the moving ratios $\rho_n$ in the family $y^2=x^3+Ax$ for $n=2$ (green), $3$ (red), $4$ (gray), $5$ (purple), compared to the models of the form $D_n/X^{f_n}$ (in blue for $n=2,3$ and red for $n=4,5$).}
\label{fig-rhonx1728}
\end{figure}

\end{remark}

\begin{example}\label{ex-rankbinomial}
Theorem \ref{thm-ranksbinomial}, assuming $H_B$, provides the expected value and variance for the rank of an elliptic curve $E/\Q$ of Selmer rank $n$ and height $X$. More precisely, in Corollary \ref{cor-hasse1} and \ref{cor-rankprobabilities}, we give formulas for the probabilities for each rank. Now that we have models for $\rho_n(X)$ and $C_{1,1}^n(X)$ (as in Remark \ref{rem-covariance}), we can look at the distribution of ranks in intervals. Let us consider, for instance, the curves $\E(I)$ in the height interval $I=[20\cdot 10^9,20.25\cdot 10^9]$ in the BHKSSW database. For each $n=2,3,4,5$ we have created histograms using the number of curves of Selmer rank $n$ and Mordell--Weil rank $0\leq n$ (in blue bars), and also created histograms with the number of M--W ranks that we would expect from Corollary \ref{cor-rankprobabilities} (in green bars). The resulting histograms can be found in Figure \ref{fig-hist2} (together with the graph of the normal distribution that would approximate the binomial $B(\lfloor n/2 \rfloor,\rho_n(X))$. We have also included the raw data of ranks observed and ranks predicted in Table \ref{tab-hist2data}.

\begin{center}
\begin{figure}[h!]
\includegraphics[width=6.6in]{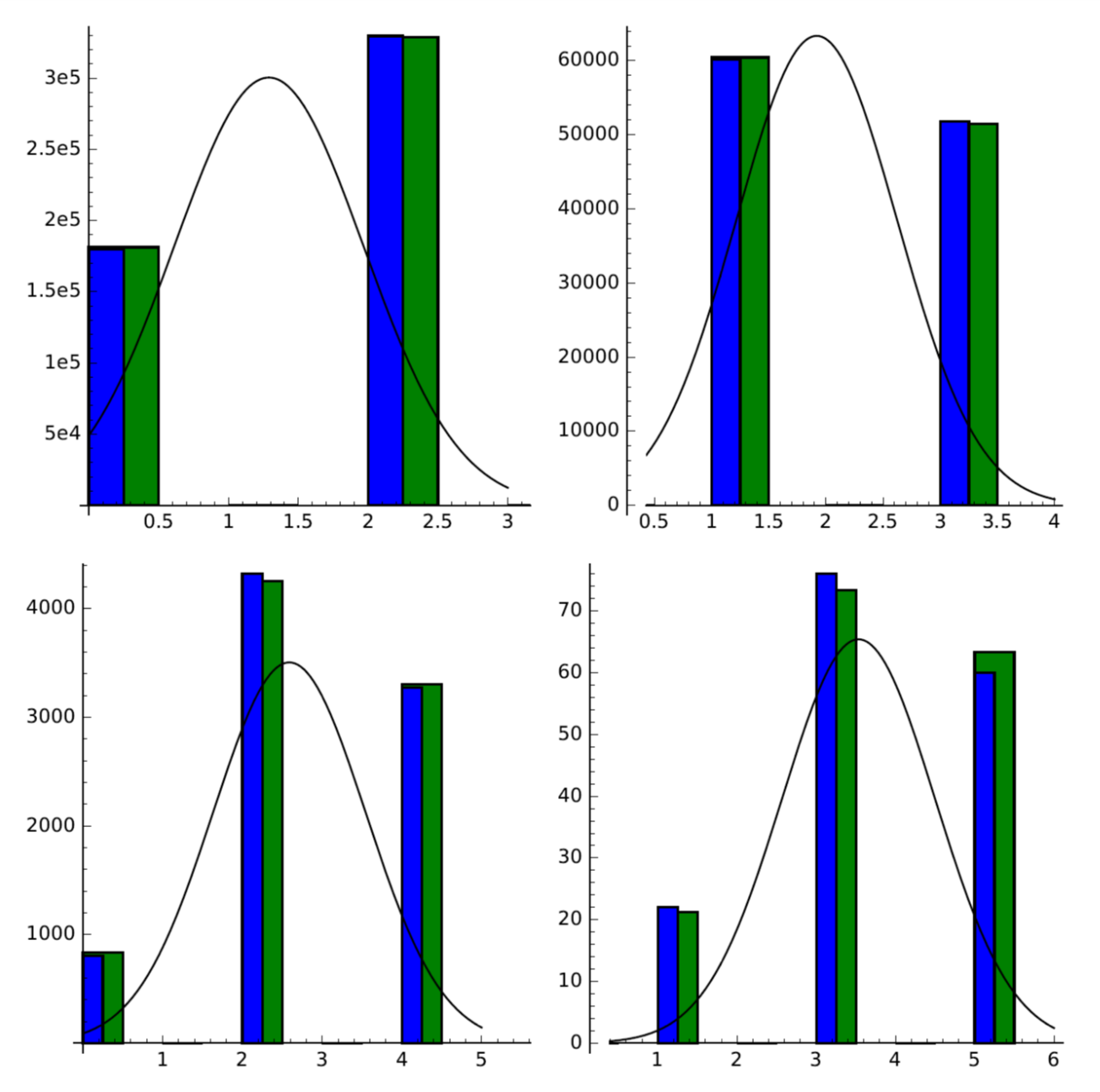}
\caption{Histogram (in blue) of the distribution of Mordell--Weil ranks among elliptic curves in $\E([2\cdot 10^{10},2.025\cdot 10^{10}])$ by Selmer rank $n=2,3,4,5$, and compared to the histogram (in green) of the  M--W ranks that we would expect from Theorem \ref{thm-ranksbinomial}. The graph is that of the normal distribution that best approximates the binomial.}
\label{fig-hist2}
\end{figure}
\end{center}

\begin{table}[h!]
\centering
\def\arraystretch{1.5}
\begin{tabular}{c|ccc}
$n$ & $\pi_{\s_n}([2\cdot 10^{10},2.025\cdot 10^{10}])$ & M--W ranks observed in $\s_n$ & M--W ranks predicted\\
\hline
$2$ & $\num[group-separator={,}]{509845}$ & $[180128,0,329717,0,0,0]$ & $[181246.58, 0, 328598.41,0,0,0]$ \\
$3$ & $\num[group-separator={,}]{111926}$ & $[0,60149,0,51777,0,0]$ & $[0, 60455.09, 0, 51470.90,0,0]$ \\
$4$ & $\num[group-separator={,}]{8399}$ & $[803,0,4321,0,3275,0]$ & $[836.68, 0, 4256.52, 0, 3305.78,0]$ \\
$5$ & $\num[group-separator={,}]{158}$ & $[0,22,0,76,0,60]$ & $[0, 21.24, 0, 73.38, 0, 63.36]$ \\
\end{tabular}
\caption{Mordell--Weil ranks observed in the interval height interval $[2\cdot 10^{10},2.025\cdot 10^{10}]$ and the ranks predicted by the distribution of Theorem \ref{thm-ranksbinomial}.}
\label{tab-hist2data}
\end{table}
\end{example}

\section{Predicting the number of curves with a given rank up to height $X$}\label{sec-predictrank}

Let $X,r\geq 0$ be fixed. We denote the set of elliptic curves of height $\leq X$ and Mordell--Weil rank $r$ by
\begin{eqnarray*}
\mathcal{R}_r(X) = \{ E \in \E(X) : \rank(E(\Q))= r \},
\end{eqnarray*}
and we write $\pi_{\mathcal{R}_r}(X)=\#\mathcal{R}_r(X)$. We refer the reader to Sections 3.3 and 3.4 of \cite{ppvm} for a summary of conjectures about $\pi_{\mathcal{R}_r}(X)$, but we point out two in particular:
\begin{itemize}
\item Watkins (\cite{watkins}; see also \cite{BMSW} for an expository paper) has conjectured that there is a constant $c$ such that
$$\sum_{k=1}^\infty \pi_{\mathcal{R}_{2k}}(X) = (c+o(1))X^{19/24}(\log X)^{3/8}.$$
\item Park, Poonen, Voight, and Wood (\cite{ppvm}) have developed a heuristic that predicts:
\begin{enumerate}
\item All but finitely many elliptic curves satisfy $\rank(E(\Q))\leq 21$.
\item For $1\leq r\leq 20$, we have $\sum_{k=r}^\infty \pi_{\mathcal{R}_k}(X) = X^{(21-r)/24+o(1)}$.
\item $\sum_{k=21}^\infty \pi_{\mathcal{R}_k}(X) \leq  X^{o(1)}$.
\end{enumerate} 
Note that prediction (3) follows from (1) and (2).
\end{itemize}

In this section, we denote the set of test elliptic curves of height $\leq X$ and rank $r$ by
\begin{eqnarray*}
	\wrr_r(X) = \{ E \in \wee(X) : \rank(E)= r \},
\end{eqnarray*}
and if $\wtt\in\mathbf{T}$, we write $\wtr_r(X)$ for the subset of test elliptic curves in $\wtt$ of rank $r$ and height $\leq X$. Then, we write $\pi_{\wtr_r}(X)=\#\wtr_r(X)$. We shall assume hypotheses $H_A$ and $H_B$, and derive the expected value of $\pi_{\wtr_r}(X)$ that follows from the probability distributions we have studied in previous sections. We shall study $\pi_{\wtr_r}(X)$ as the sum of the contributions of rank $r$ coming from each Selmer rank $n=r+2j$. That is, we shall approximate $\pi_{\wtr_r}(X)$ by approximating each term in the infinite sum
$$\pi_{\wtr_r}(X) = \sum_{j=0}^\infty \pi_{\wtr_r\cap \wss_{r+2j}}(X).$$
Thus, for fixed $r\geq 0$, we first give the expected value of $\pi_{\wtr_r\cap \wss_{r+2j}}(X)$ for each $j\geq 0$.

\begin{theorem}\label{thm-predictrank}
Let $X,r\geq 0,j\geq 0$ be fixed, such that $n(j)=r+2j\geq 2$. Let $\wtt\in\mathbf{T}$ be arbitrary. If we assume $H_A$ and $H_B$, then the expected value of  $\pi_{\wtr_r\cap \wss_{n(j)}}(X)$ is given by $$\mathbb{E}\left(\pi_{\wtr_r\cap\wss_{n(j)}} (X)\right) \myeq  \frac{5\kappa}{6}\binom{\lfloor \frac{r}{2} \rfloor+j}{j}\int_0^{X} \cfrac{\theta_{n(j)}(H)}{H^{1/6}}\cdot \mathbb{E}_{\lfloor \frac{r}{2} \rfloor,j}^{n(j)}(H)\, dH +  O\left(X^{1/2}\right),$$
where $\mathbb{E}_{\lfloor \frac{r}{2} \rfloor,j}^{n(j)}(H)$ is the expected value defined in Remark \ref{rem-notation}. 
Further, if we assume Hypothesis \ref{conj-selmerratio}, then 
$$\mathbb{E}\left(\pi_{\wtr_r\cap \wss_{n(j)}}(X)\right) \myeq \frac{5\kappa}{6}  \binom{\lfloor \frac{r}{2} \rfloor+j}{j} \int_{0}^X \cfrac{ s_{n(j)}\cdot \mathbb{E}_{\lfloor \frac{r}{2} \rfloor,j}^{n(j)}(H)}{(1+C_{n(j)}H^{-e_{n(j)}})\cdot H^{1/6}}\, dH +  O\left(X^{1/2}\right).$$
Moreover, in both cases the implied error in the approximation is bounded by $C\theta_{n(j)}^\text{sup}/X^{1/2}$, for some constant $C$ that does not depend on $n$ or $j$, and where $\theta_{n(j)}^\text{sup}$ is the supremum of $\theta_{n(j)}(X)$ in $(0,\infty)$.
\end{theorem}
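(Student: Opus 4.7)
The plan is to express $\pi_{\wtr_r\cap \wss_{n(j)}}(X)$ as a double sum of indicator random variables, apply linearity of expectation, and then combine Hypothesis $H_A$ (for the Selmer rank) with Hypothesis $H_B$ via Corollary \ref{cor-hasse1} (for the MW rank conditional on Selmer rank). First, for each height $H$ and each $E \in \wtt^H$, I write $Z_E = 1$ if $E \in \wtr_r \cap \wss_{n(j)}$ and $Z_E = 0$ otherwise, so that $\pi_{\wtr_r \cap \wss_{n(j)}}(X) = \sum_{H=1}^X \sum_{E \in \wtt^H} Z_E$.

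By the definition of the rank of a test elliptic curve (Definition \ref{defn-rank}), a test curve of Selmer rank $n(j)$ has MW rank $r$ exactly when $\lfloor r/2 \rfloor$ of its $\lfloor n(j)/2 \rfloor = \lfloor r/2 \rfloor + j$ test Selmer symbols are MW (and the remaining $j$ are $\Sh$), using that $n(j) \equiv r \pmod{2}$. Corollary \ref{cor-hasse1}(3), applied with $n = n(j)$ so that $m(j) = \lfloor r/2 \rfloor$, gives the conditional probability of this event as $\binom{\lfloor r/2 \rfloor + j}{j}\cdot \mathbb{E}_{\lfloor r/2 \rfloor, j}^{n(j)}(H)$. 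Combining with Hypothesis $H_A$, which gives $\operatorname{Prob}(E \in \wss_{n(j)}^H) = \theta_{n(j)}(H)$, one obtains
$$\mathbb{E}(Z_E) = \theta_{n(j)}(H)\cdot \binom{\lfloor r/2 \rfloor + j}{j}\cdot \mathbb{E}_{\lfloor r/2 \rfloor, j}^{n(j)}(H).$$
Summing over $E \in \wtt^H$ and $H=1,\dots,X$ then yields
$$\mathbb{E}(\pi_{\wtr_r \cap \wss_{n(j)}}(X)) = \binom{\lfloor r/2 \rfloor + j}{j} \sum_{H=1}^X \pi_{\wtt}([H,H])\cdot \theta_{n(j)}(H)\cdot \mathbb{E}_{\lfloor r/2 \rfloor, j}^{n(j)}(H).$$

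The final step mirrors the strategy used in Proposition \ref{prop-numberselcurves}: by Corollary \ref{cor-countcurvesaverage}, $\pi_{\wtt}([H,H]) \approx (5\kappa/6)/H^{1/6} + O(H^{-1/2})$ on average, and Lemma \ref{lem-inave} allows me to absorb the bounded factor $\theta_{n(j)}(H)\cdot \mathbb{E}_{\lfloor r/2 \rfloor, j}^{n(j)}(H) \leq 1$ and convert the discrete sum to an integral, producing the main term $\tfrac{5\kappa}{6}\int_0^X H^{-1/6}\theta_{n(j)}(H)\mathbb{E}_{\lfloor r/2 \rfloor, j}^{n(j)}(H)\,dH$ with error $\theta_{n(j)}(X)\cdot O(X^{1/2})$. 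The refined formula under Hypothesis \ref{conj-selmerratio} follows by substituting $\theta_{n(j)}(H) = s_{n(j)}/(1+C_{n(j)}H^{-e_{n(j)}})$. The main subtlety lies in tracking the error terms carefully under the ``on average'' framework of Remark \ref{rem-inave}, particularly in verifying that the factor $\mathbb{E}_{\lfloor r/2 \rfloor, j}^{n(j)}(H)$ (which depends on the unknown covariance functions $C_{s,t}^{n(j)}(H)$ from Proposition \ref{prop-equicorr}) remains uniformly bounded so that Lemma \ref{lem-inave} applies as stated. This boundedness is automatic since $\mathbb{E}_{\lfloor r/2 \rfloor, j}^{n(j)}(H)$ is the expected value of a product of $\{0,1\}$-valued random variables, hence itself lies in $[0,1]$.
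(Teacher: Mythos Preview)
Your proposal is correct and follows essentially the same approach as the paper: decompose $\pi_{\wtr_r\cap\wss_{n(j)}}(X)$ height by height, factor the expectation into the Selmer-rank probability $\theta_{n(j)}(H)$ from $H_A$ times the conditional rank probability $\binom{\lfloor r/2\rfloor+j}{j}\mathbb{E}_{\lfloor r/2\rfloor,j}^{n(j)}(H)$ from Corollary~\ref{cor-hasse1}, and then replace the discrete sum by the integral using the average count of Corollary~\ref{cor-countcurvesaverage} and Lemma~\ref{lem-inave}. Your closing remark that $\mathbb{E}_{\lfloor r/2\rfloor,j}^{n(j)}(H)\in[0,1]$ (being the expectation of a product of $\{0,1\}$-valued variables) is a helpful justification that the paper leaves implicit.
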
 
\begin{proof} 
Let us write $n(j)=r+2j$. Thus, $\lfloor \frac{n(j)}{2} \rfloor = \lfloor \frac{r}{2} \rfloor +j$. We compute the expected value of $\pi_{\wtr_r}(X)$ as follows:
\begin{eqnarray*} \mathbb{E}\left(\pi_{\wtr_r\cap\wss_{n(j)}} (X)\right) &=&  \mathbb{E}\left(\#\{E\in \wts_{n(j)}(X) : \rank(E)=r \}\right)\\
&=&  \sum_{T=1}^X \mathbb{E}\left(\#\{E\in \wts_{n(j)}([T,T]) : \rank(E)=r \}\right)\\
&=&  \sum_{T=1}^X \mathbb{E}\left(\pi_{\wts_{n(j)}}([T,T])\right)\cdot \operatorname{Prob}\left(\rank(E)=r\ |\ E\in \wss_{r+2j}^T\right)\\
&=& \sum_{T=1}^X \pi_{\wtt_{n(j)}}([T,T])\cdot \theta_{n(j)}(H)\cdot \operatorname{Prob}\left(\rank(E)=r\ |\ E\in \wss_{r+2j}^T\right)\\
&=& \sum_{T=1}^X \pi_{\wtt_{n(j)}}([T,T])\cdot \theta_{n(j)}(H)\cdot \binom{\lfloor \frac{n(j)}{2} \rfloor}{j}\cdot  \mathbb{E}_{\lfloor \frac{r}{2} \rfloor,j}^{n(j)}(H),
\end{eqnarray*}
by the basic properties of the expected value, and Corollary \ref{cor-hasse1} for the probability of rank $r$ in $\wss_{n(j)}([T,T])$. Let $\theta(X) =\theta_{n(j)}(X)\cdot   \mathbb{E}_{\lfloor \frac{r}{2} \rfloor,j}^{n(j)}(X)$. Thus,
\begin{align*} \theta(X) &=\theta_{n(j)}(X)\cdot   \mathbb{E}_{\lfloor \frac{r}{2} \rfloor,j}^{n(j)}(X) =(s_{n(j)} + O(X^{-e_n}))\cdot  \begin{cases}
O(X^{-f_{n(j)}\cdot i}) & \text{ if } i>0,\\
1-O(X^{-f_{n(j)}}) & \text{ if } i=0 \text{ and } j>0.
\end{cases} \\
& = \begin{cases}
O(X^{-f_{n(j)}\cdot \lfloor \frac{r}{2} \rfloor}) & \text{ if } \lfloor \frac{r}{2} \rfloor>0,\\
s_{n(j)} +O(X^{-\min\{e_{n(j)}, f_{n(j)}  \}}) & \text{ if } \lfloor \frac{r}{2} \rfloor=0 \text{ and } j>0,
\end{cases} 
\end{align*}
and 
\begin{align*} |\theta(X)'| &= |(\theta_{n(j)}(X)\cdot  \mathbb{E}_{\lfloor \frac{r}{2} \rfloor,j}^{n(j)}(X))'|=|\theta_{n(j)}(X)'\cdot  \mathbb{E}_{\lfloor \frac{r}{2} \rfloor,j}^{n(j)}(X)+\theta_{n(j)}(X)\cdot  \mathbb{E}_{\lfloor \frac{r}{2} \rfloor,j}^{n(j)}(X)'| \\
&\leq \frac{\theta_{n(j)}^{\text{sup}}\cdot C_e}{X^{e_{n(j)}+1}} + \frac{\theta_{n(j)}^{\text{sup}}\cdot C_{\mathbb{E}}}{X^{f_{n(j)}+1}}\leq \frac{\theta_{n(j)}^{\text{sup}}\cdot \max\{C_e,C_{\mathbb{E}} \}}{X^{\min\{e_{n(j)},f_{n(j)} \}+1}},
\end{align*}
 where we have used Remark \ref{rem-derivativeoftheta}, and the growth assumptions on $\mathbb{E}_{i,j}^n(X)$ and its derivative imposed by Hypothesis \ref{conj-hasseratio}. Thus, Corollary \ref{cor-travelratio} for the value of $\pi_{\wts_{n(j)}}([H,H])$ (on average) and Corollary \ref{cor-countcurvesaverage1} show that 
\begin{eqnarray*}\mathbb{E}\left(\pi_{\wtr_r\cap\wss_{n(j)}} (X)\right) & \myeq & \sum_{T=0}^{X-1} \left( \frac{5\kappa}{6}\int_T^{T+1} \cfrac{\theta_{n(j)}(H)}{H^{1/6}}\cdot \binom{\lfloor \frac{n(j)}{2} \rfloor}{j}\cdot  \mathbb{E}_{\lfloor \frac{r}{2} \rfloor,j}^{n(j)}(H)\, dH +  O\left(\frac{1}{T^{1/2}}\right) \right)\\
&\myeq &  \frac{5\kappa}{6}\binom{\lfloor \frac{r}{2} \rfloor+j}{j}\int_0^{X} \cfrac{\theta_{n(j)}(H)}{H^{1/6}}\cdot \mathbb{E}_{\lfloor \frac{r}{2} \rfloor,j}^{n(j)}(H)\, dH + O\left(X^{1/2}\right).
\end{eqnarray*}
where the implied error in the approximation is bounded by $C\cdot \theta_{n(j)}^\text{sup}/X^{1/2}$, where the constant $C$ does not depend on $n$ or $j$. 
If we further assume Hypothesis \ref{conj-selmerratio}, then 
\begin{eqnarray*}\pi_{\wtr_r\cap\wss_{n(j)}} (X)&\myeq & \frac{5\kappa}{6}  \binom{\lfloor \frac{r}{2} \rfloor+j}{j} \int_{0}^X \cfrac{ s_{n(j)}\cdot \mathbb{E}_{\lfloor \frac{r}{2} \rfloor,j}^{n(j)}(H)}{(1+C_{n(j)}H^{-e_{n(j)}})\cdot H^{1/6}}\, dH + O\left(X^{1/2}\right),
\end{eqnarray*}
as claimed.
\end{proof} 

If we now use the formula $\pi_{\wtr_r}(X) = \sum_{j=0}^\infty \pi_{\wtr_r\cap \wss_{r+2j}}(X)$ and the fact that $\sum_{n=0}^\infty \theta_n^{\text{sup}}\leq 1+1+\sum_{n=2}^\infty s_n$ is bounded (by Lemma \ref{lem-seriesconverge}) we obtain the following result. 

\begin{cor}\label{cor-predictrank}
Let $X,r\geq 0$ be fixed, and let $\wtt \in \mathbf{T}$ be arbitrary. If we assume $H_A$ and $H_B$, then the expected value of $\pi_{\wtr_r}(X)$ is given by the formula
$$\mathbb{E}\left(\pi_{\wtr_r} (X)\right) \myeq \frac{5\kappa}{6}\sum_{j=0}^\infty \binom{\lfloor \frac{r}{2} \rfloor+j}{j}\int_0^{X} \cfrac{\theta_{n(j)}(H)}{H^{1/6}}\cdot \mathbb{E}_{\lfloor \frac{r}{2} \rfloor,j}^{n(j)}(H)\, dH +  O\left(X^{1/2}\right).$$
where  $\mathbb{E}_{\lfloor \frac{r}{2} \rfloor,j}^{n(j)}(H)$ is the expected value defined in Remark \ref{rem-notation}.
\end{cor}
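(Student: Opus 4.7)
The plan is to decompose $\wtr_r(X)$ by Selmer rank and apply Theorem~\ref{thm-predictrank} to each piece. By Definition~\ref{defn-rank} (and Remark~\ref{rem-rank1} for the empty-Selmer-vector cases $n=0,1$), the rank of any test elliptic curve $E$ satisfies $\rank(E)\equiv\selrank(E)\pmod 2$ and $0\leq\rank(E)\leq\selrank(E)$. Consequently, $E\in\wtr_r(X)$ forces $\selrank(E)=r+2j$ for a unique $j\geq 0$, which yields the disjoint decomposition
\[
\wtr_r(X)=\bigsqcup_{j=0}^{\infty}\bigl(\wtr_r(X)\cap\wss_{r+2j}\bigr),
\]
and hence the pointwise identity $\pi_{\wtr_r}(X)=\sum_{j\geq 0}\pi_{\wtr_r\cap\wss_{r+2j}}(X)$ of non-negative integer-valued random variables. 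Taking expectations and applying Tonelli (valid by non-negativity of the summands) gives $\mathbb{E}(\pi_{\wtr_r}(X))=\sum_{j\geq 0}\mathbb{E}(\pi_{\wtr_r\cap\wss_{r+2j}}(X))$.

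Next, I would apply Theorem~\ref{thm-predictrank} term by term for $n(j)=r+2j\geq 2$. The only edge cases are $n(j)\in\{0,1\}$, which occur only when $r\in\{0,1\}$ and $j=0$; in both, Remark~\ref{rem-rank1} identifies $\pi_{\wtr_r\cap\wss_r}(X)$ with $\pi_{\wss_r}(X)$, and Proposition~\ref{prop-numberselcurves}(1) supplies the matching expected value. Under the natural conventions $\binom{0}{0}=1$ and $\mathbb{E}_{0,0}^{n}(H)=1$ (the empty product of random variables, as in Remark~\ref{rem-notation}), the edge-case formulas coincide with the general shape of Theorem~\ref{thm-predictrank}. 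Summing the main terms over all $j\geq 0$ reproduces exactly
\[
\frac{5\kappa}{6}\sum_{j=0}^{\infty}\binom{\lfloor r/2\rfloor+j}{j}\int_0^X\frac{\theta_{r+2j}(H)}{H^{1/6}}\,\mathbb{E}_{\lfloor r/2\rfloor,j}^{r+2j}(H)\,dH,
\]
which is the stated main term.

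The remaining step is to aggregate the error terms. Each invocation of Theorem~\ref{thm-predictrank} (and Proposition~\ref{prop-numberselcurves} for the edge cases) contributes an error of $\theta_{r+2j}(X)\cdot O(X^{1/2})$; the implied constant ultimately comes from the defining error $\pi_{\wtt}(X)=\kappa X^{5/6}+O(X^{1/2})$ of the space $\mathbf{T}$, so it is uniform in $j$ (and in particular independent of $n=r+2j$). Summing gives a total error of $\bigl(\sum_{j\geq 0}\theta_{r+2j}(X)\bigr)\cdot O(X^{1/2})$. The main (and essentially only) obstacle is to verify that this series is bounded uniformly in $X$ and $r$. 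For this I would invoke Corollary~\ref{cor-travelratio}, which under $H_A$ states $\sum_{n\geq 0}\theta_n(X)=1$; in particular $\sum_{j\geq 0}\theta_{r+2j}(X)\leq 1$ for every $X$ and $r$. Hence the aggregated error collapses to $O(X^{1/2})$, which completes the corollary.
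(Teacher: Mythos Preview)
Your proposal is correct and follows essentially the same approach as the paper, which dispatches the corollary in one sentence by citing the decomposition $\pi_{\wtr_r}(X)=\sum_{j\geq 0}\pi_{\wtr_r\cap\wss_{r+2j}}(X)$ and the identity $\sum_{n\geq 0}\theta_n(X)=1$ from Corollary~\ref{cor-travelratio}. You are in fact more careful than the paper: you explicitly address the edge cases $n(j)\in\{0,1\}$ (where Theorem~\ref{thm-predictrank} does not literally apply) via Proposition~\ref{prop-numberselcurves} and the convention $\mathbb{E}_{0,0}^n=1$, and you justify the uniformity in $j$ of the implied constant in the error term, points the paper leaves implicit.
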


\begin{remark}
If we assume $H_A$, $H_B$, and Hypotheses \ref{conj-selmerratio} and \ref{conj-hasseratio}, and in addition (for the sake of simplicity) we assume that the random  variables $Y_1,\ldots,Y_{\lfloor n(j)/2\rfloor}$ are independent in $S=\bigcup \Sel_2(E)$, then we would have
\begin{eqnarray*}\mathbb{E}(\pi_{\wtr_r}(X)) &\myeq & \frac{5\kappa}{6} \sum_{j=0}^\infty \binom{\lfloor \frac{r}{2} \rfloor+j}{j} \int_{0}^X \cfrac{\theta_{n(j)}(H)}{H^{1/6}}\cdot \rho_{n(j)}(H)^{\lfloor r/2 \rfloor}(1-\rho_{n(j)}(H))^j\, dH +  O\left(X^{1/2}\right)\\
&\myeq & \frac{5\kappa}{6} \sum_{j=0}^\infty \binom{\lfloor \frac{r}{2} \rfloor+j}{j} \int_{0}^X \cfrac{ s_{n(j)}\cdot (D_{n(j)})^{\lfloor \frac{r}{2} \rfloor}\cdot (H^{f_{n(j)}}-D_{n(j)})^j}{(1+C_{n(j)}H^{-e_{n(j)}})\cdot H^{1/6+(\lfloor \frac{r}{2} \rfloor+j)\cdot f_{n(j)}}}\, dH +  O\left(X^{1/2}\right).
\end{eqnarray*} 
If we simplify this expression further by just retaining the highest order term (and for now assume $r\geq 2$). We obtain the following approximations:
\begin{eqnarray*} \pi_{\wtr_r}(X) &\approx & \frac{5\kappa}{6} \sum_{j=0}^\infty \binom{\lfloor \frac{r}{2} \rfloor+j}{j} \cdot   s_{n(j)}\cdot (D_{n(j)})^{\lfloor \frac{r}{2} \rfloor} \int_{0}^X \cfrac{ 1 }{H^{1/6+\lfloor \frac{r}{2} \rfloor\cdot f_{n(j)}}}\, dH\\
&\approx & \frac{5\kappa}{6} \sum_{j=0}^\infty \binom{\lfloor \frac{r}{2} \rfloor+j}{j} \cdot s_{n(j)}\cdot (D_{n(j)})^{\lfloor \frac{r}{2} \rfloor} \cdot \frac{X^{5/6-\lfloor \frac{r}{2} \rfloor\cdot f_{n(j)}}}{5/6-\lfloor \frac{r}{2} \rfloor\cdot f_{n(j)}}.
\end{eqnarray*}
In particular, if there is $j\geq 0$ such that $\lfloor \frac{r}{2} \rfloor\cdot f_{n(j)}<5/6$, then there are infinitely many (test) elliptic curves with rank $r$ (and Selmer rank $n(j)$). With the data we have at our disposal, for $n\leq 5$, according to Table \ref{tab-rhomodel}, we see that
$$\left\lfloor \frac{r}{2} \right\rfloor\cdot f_{n(j)}\leq \left\lfloor \frac{n}{2} \right\rfloor\cdot f_{n}\leq 2\cdot 0.045=0.09\leq 0.8\overline{3}=5/6.$$
Some further speculation (to be taken with a grain of salt due to the accumulation of assumptions and simplifications):
\begin{itemize}
	\item If the values of $\{f_{2n}\}$, then $f_{2n}\leq 0.024$ and $\lfloor \frac{r}{2} \rfloor\cdot f_{2n}\leq 5/6$ at least for all $r\leq 69$.
	\item If the values of $\{f_{2n+1}\}$, then $f_{2n+1}\leq 0.045$ and $\lfloor \frac{r}{2} \rfloor\cdot f_{2n}\leq 5/6$ at least for all $r\leq 37$.
	\item  If $f_n\leq 10/(6n)$, then we would always have $\lfloor \frac{r}{2} \rfloor\cdot f_{n(j)}<5/6$. Note that according to Table \ref{tab-rhomodel} we have 
	$$f_2 \leq 0.024\leq 0.8\overline{3}=\frac{10}{12},\ f_3\leq 0.045\leq 0.\overline{5}= \frac{10}{18},\ f_4\leq 0.022\leq 0.41\overline{6}= \frac{10}{24},\ f_5\leq 0.044\leq 0.\overline{3}= \frac{10}{30}.$$
\end{itemize}

\end{remark}

In our next result, we use Theorem \ref{thm-predictrank} to write formulas for the contribution in rank $r=1,\ldots,5$ coming from Selmer ranks $n=1,\ldots,5$.

\begin{cor}\label{cor-predictrank2}
If we assume $H_A$, $H_B$, and Hypotheses \ref{conj-selmerratio} and \ref{conj-hasseratio}, then the formulas in Corollary  \ref{cor-rankprobabilities} imply approximations of $\pi_{\wtr_r\cap \wss_n}(X)$ as given in Table \ref{tab-predictrank}, for $1\leq r\leq n\leq 5$ and $r\equiv n\bmod 2$.
\end{cor}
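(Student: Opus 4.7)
The plan is to start from the master formula in Theorem \ref{thm-predictrank} and specialize it using the explicit rank probabilities already computed in Corollary \ref{cor-rankprobabilities}. The key observation linking the two is that
\[
p_n(r) \;=\; \mathbb{E}(Y_{\operatorname{rk}=r}\,|\,S) \;=\; \binom{\lfloor n/2\rfloor}{j}\cdot \mathbb{E}_{\lfloor r/2\rfloor,j}^{n}(X),
\]
where $n=r+2j$ and I have used the identity $m(j)=\lfloor n/2\rfloor-j=\lfloor r/2\rfloor$ (valid since $n\equiv r\bmod 2$). In particular, because $\lfloor r/2\rfloor+j=\lfloor n/2\rfloor$, the binomial factor $\binom{\lfloor r/2\rfloor+j}{j}$ appearing in Theorem \ref{thm-predictrank} is exactly the same as the one appearing in part (3) of Corollary \ref{cor-hasse1}, so the two cancel and we obtain the simpler formula
\[
\pi_{\wtr_r\cap \wss_{n}}(X)\;\approx\; \frac{5\kappa}{6}\int_{0}^X \frac{s_n \cdot p_n(r)}{(1+C_n H^{-e_n})\cdot H^{1/6}}\,dH + O(X^{1/2}),
\]
where I have also used Hypothesis \ref{conj-selmerratio} to substitute $\theta_n(H)=s_n/(1+C_nH^{-e_n})$.

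Next, I would plug in the explicit formulas for $p_n(r)$ from the table in Corollary \ref{cor-rankprobabilities}. For $n=2,3$ there is no covariance contribution since only one random variable $Y_1$ is involved, so $p_n(r)$ is either $\rho_n(H)$ or $1-\rho_n(H)$. For $n=4,5$, the expressions involve $\rho_n(H)^2+C_{1,1}^n(H)$, $2\rho_n(H)(1-\rho_n(H))-2C_{1,1}^n(H)$, and $(1-\rho_n(H))^2+C_{1,1}^n(H)$. Using Hypothesis \ref{conj-hasseratio}, I would replace $\rho_n(H)$ with $D_n/H^{f_n}$ throughout, producing integrands that are rational in fractional powers of $H$ (plus the covariance terms for $n=4,5$).

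The last step is purely bookkeeping: for each pair $(r,n)$ with $1\le r\le n\le 5$ and $r\equiv n\bmod 2$, I would record the integrand in a single row of Table \ref{tab-predictrank}. Concretely, the rows would read, schematically,
\begin{align*}
\pi_{\wtr_1\cap\wss_1}(X) &\approx \tfrac{5\kappa}{6}\int_0^X \tfrac{s_1}{(1+C_1 H^{-e_1})H^{1/6}}\,dH,\\
\pi_{\wtr_0\cap\wss_2}(X) &\approx \tfrac{5\kappa}{6}\int_0^X \tfrac{s_2(1-D_2 H^{-f_2})}{(1+C_2 H^{-e_2})H^{1/6}}\,dH,
\end{align*}
and so on, with the $n=4,5$ rows carrying an extra summand involving $C_{1,1}^n(H)$ (which may be taken as the approximate constants $-0.025$ for $n=4$ and $0$ for $n=5$ per Remark \ref{rem-covariance}).

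I do not expect any real obstacle here: the proof is essentially a substitution followed by tabulation. The only mildly delicate point is verifying the matching of binomial coefficients via $n=r+2j$ with $n\equiv r\bmod 2$, and being careful about the covariance terms $C_{1,1}^n(H)$ for $n=4,5$, which do not admit as simple a closed-form model as $\rho_n(H)$; however, since these terms contribute a bounded additive factor to the integrand, they pose no integration difficulty once one agrees to an empirical or constant approximation as in Remark \ref{rem-covariance}.
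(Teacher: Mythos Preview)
Your approach is correct and is precisely the argument the paper intends (the paper itself states this corollary without an explicit proof, treating it as a direct substitution). Your key identification of $p_n(r)=\binom{\lfloor n/2\rfloor}{j}\mathbb{E}^n_{\lfloor r/2\rfloor,j}(X)$ together with $\lfloor r/2\rfloor+j=\lfloor n/2\rfloor$ is exactly what collapses the general formula of Theorem~\ref{thm-predictrank} into the integrals of Table~\ref{tab-predictrank}. One small slip: your illustrative example $\pi_{\wtr_0\cap\wss_2}$ is not actually in the stated range $1\le r\le n\le 5$, so replace it with, say, $\pi_{\wtr_2\cap\wss_2}$; and note that the $n=1$ case (which lies outside the $n(j)\ge 2$ hypothesis of Theorem~\ref{thm-predictrank}) follows instead directly from Proposition~\ref{prop-numberselcurves} since $p_1(1)=1$.
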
 

\begin{remark}\label{rem-predictrank}
Using the formulas given by Corollary \ref{cor-predictrank2} and Table \ref{tab-predictrank2}, we can give approximations of $\pi_{\mathcal{R}_r}(X)$. For instance,
$$\pi_{\mathcal{R}_1}(X)\approx \pi_{\mathcal{R}_1\cap \s_1}(X)+\pi_{\mathcal{R}_1\cap \s_3}(X)+\pi_{\mathcal{R}_1\cap \s_5}(X),$$
$$\pi_{\mathcal{R}_2}(X)\approx \pi_{\mathcal{R}_2\cap \s_2}(X)+\pi_{\mathcal{R}_2\cap \s_4}(X),\ \quad \pi_{\mathcal{R}_3}(X)\approx \pi_{\mathcal{R}_3\cap \s_3}(X)+\pi_{\mathcal{R}_3\cap \s_5}(X),$$
$$\pi_{\mathcal{R}_4}(X)\approx \pi_{\mathcal{R}_4\cap \s_4}(X),\ \quad \pi_{\mathcal{R}_5}(X)\approx \pi_{\mathcal{R}_5\cap \s_5}(X).$$
We have used SageMath to numerically integrate and compute said approximations, and we have graphed the results in Figures \ref{fig-predictrank123} (for $r=1,2,3$) and \ref{fig-predictrank45} (for $r=4,5$). In Table \ref{tab-predictrank2} we have included the values of $\pi_{\mathcal{R}_r}(2.7\cdot 10^{10})$ according to the data, the values of our approximation, the error, and the relative error (as a percentage of the actual value), and also $s_r \cdot (2.7\cdot 10^{10})^{1/2}$, which is, approximately, the size of the error as expected from Corollary \ref{cor-predictrank}.
\end{remark}

\begin{center}
\begin{figure}[h!]
\includegraphics[width=6.6in]{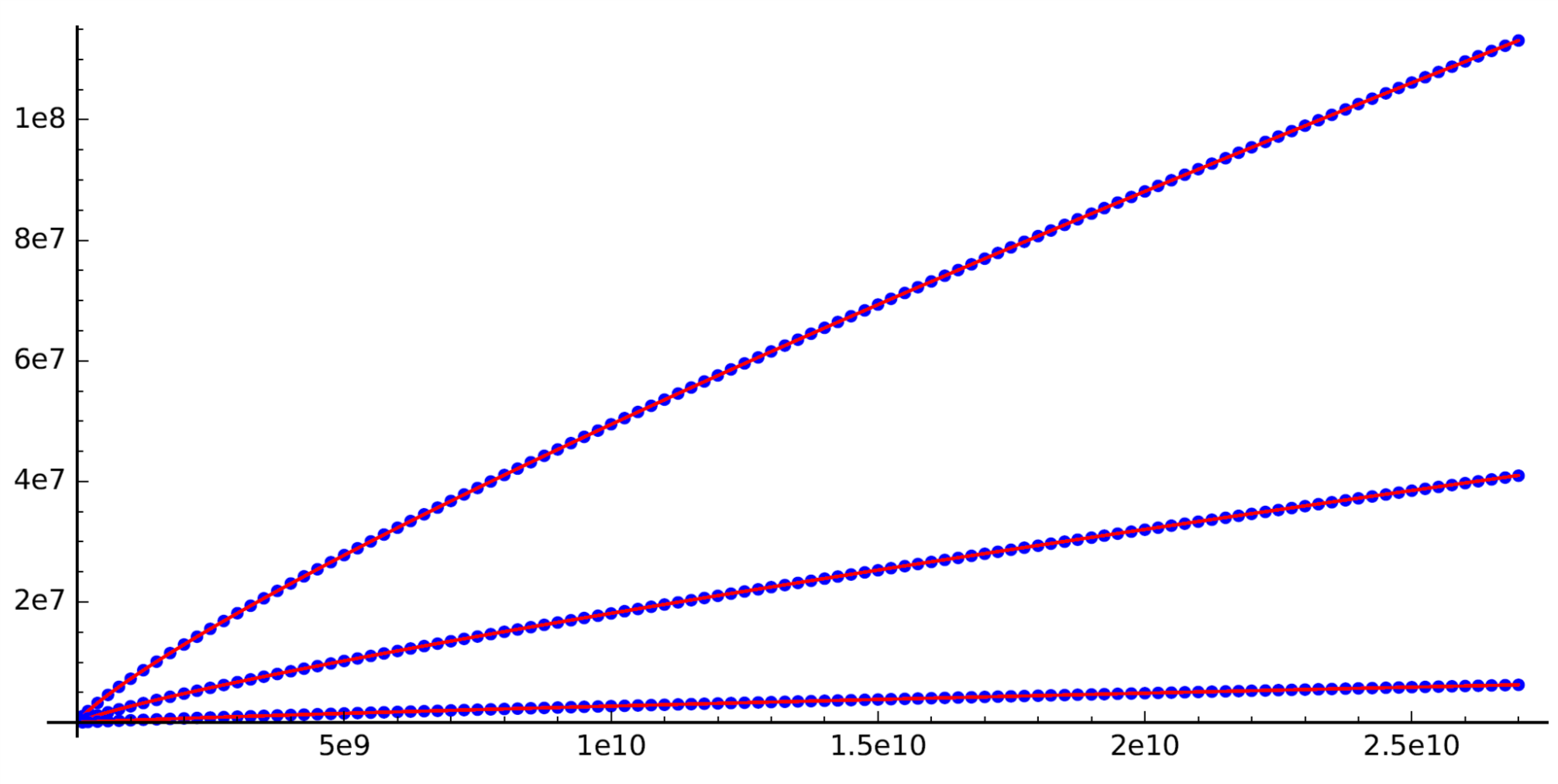}
\caption{Values of $\pi_{\mathcal{R}_r}(X)$ from the BHKSSW database (blue dots) for $r=1,2,3$, and the approximations given in Remark \ref{rem-predictrank} (in red).} \label{fig-predictrank123}
\end{figure}
\end{center}

\begin{center}
\begin{figure}[h!]
\includegraphics[width=6.6in]{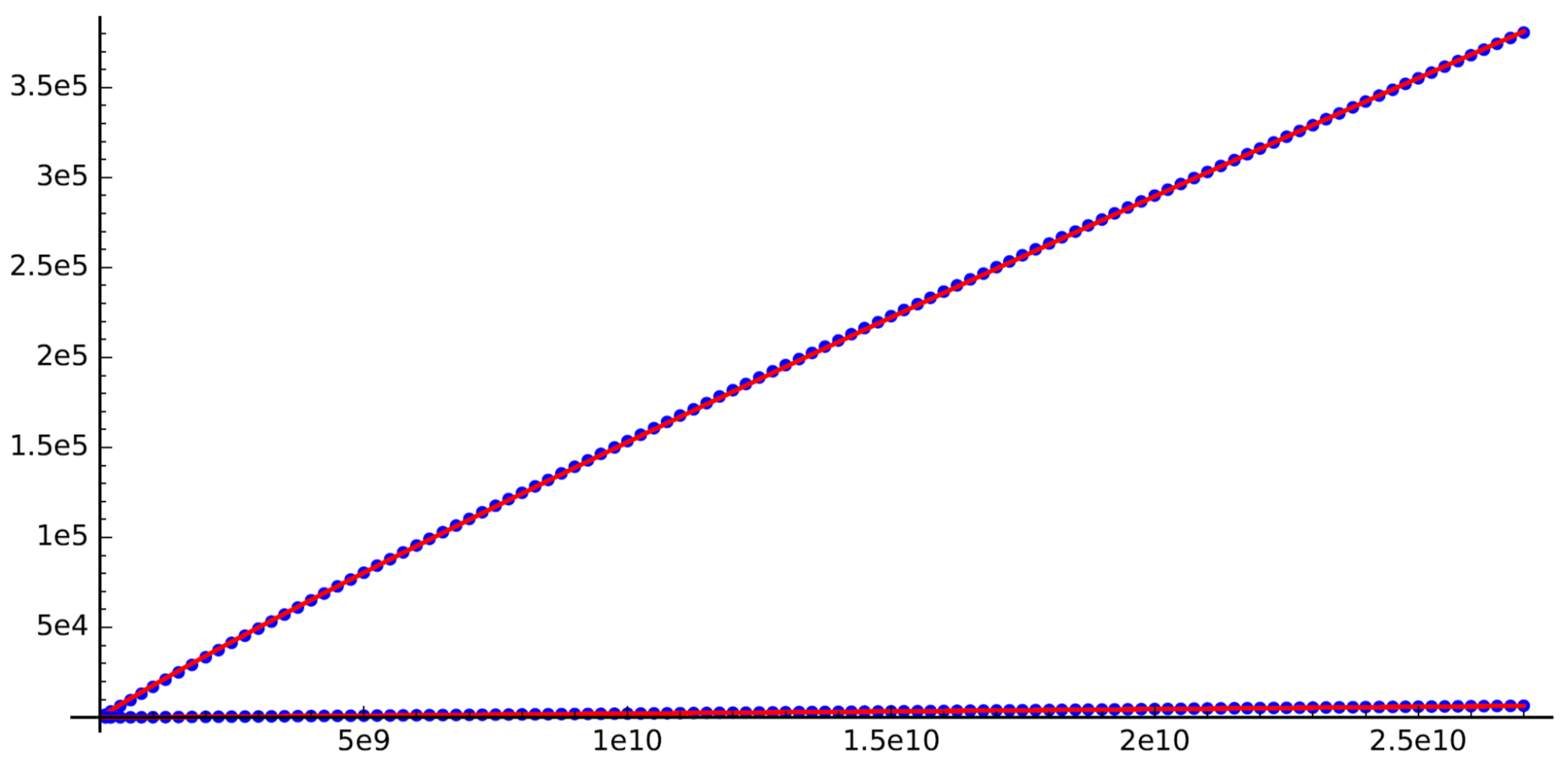}
\caption{Values of $\pi_{\mathcal{R}_r}(X)$ from the BHKSSW database (blue dots) for $r=4,5$, and the approximations given in Remark \ref{rem-predictrank} (in red).} \label{fig-predictrank45}
\end{figure}
\end{center}

\begin{table}[h!]
\centering
\def\arraystretch{3.5}
\begin{tabular}{c|c}
$\pi_{\wtr_1\cap \wss_1}(X)$ & $\displaystyle \frac{5\kappa}{6}  \int_{0}^X \cfrac{\theta_{1}(H)}{H^{1/6}}\, dH \approx  \frac{5\kappa}{6}  \int_{0}^X \cfrac{s_1}{(1+C_1H^{-e_1})H^{1/6}}\, dH$\\
$\pi_{\wtr_1\cap \wss_3}(X)$ & $\displaystyle \frac{5\kappa}{6}  \int_{0}^X \cfrac{\theta_{3}(H)}{H^{1/6}}\cdot (1-\rho_{3}(H))\, dH \approx  \frac{5\kappa}{6}  \int_{0}^X \cfrac{s_3\cdot (H^{f_3}-D_3)}{(1+C_3H^{-e_3})H^{1/6+f_3}}\, dH.$\\
$\pi_{\wtr_1\cap \wss_5}(X)$ & $\displaystyle \frac{5\kappa}{6}  \int_{0}^X \cfrac{\theta_{5}(H)}{H^{1/6}}\cdot (1-\rho_{5}(H))^2\, dH \approx  \frac{5\kappa}{6}  \int_{0}^X \cfrac{s_5\cdot (H^{f_5}-D_5)^2}{(1+C_5H^{-e_5})H^{1/6+2f_5}}\, dH.$\\
$\pi_{\wtr_2\cap \wss_2}(X)$ & $\displaystyle \frac{5\kappa}{6}  \int_{0}^X \cfrac{\theta_{2}(H)}{H^{1/6}}\cdot \rho_{2}(H)\, dH \approx  \frac{5\kappa}{6}  \int_{0}^X \cfrac{s_2\cdot D_2}{(1+C_2H^{-e_2})H^{1/6+f_2}}\, dH.$\\
$\pi_{\wtr_2\cap \wss_4}(X)$ & $\displaystyle \frac{10\kappa}{6}  \int_{0}^X \cfrac{\theta_{4}(H)}{H^{1/6}}\cdot (\rho_{4}(H)(1-\rho_4(H))-C_{1,1}^4(X))\, dH$\\
 & $\displaystyle \approx \frac{10\kappa}{6}  \int_{0}^X \cfrac{s_4\cdot (-(D_4)^2 + D_4\cdot H^{f_4}+0.025\cdot (H^{f_4})^2)}{(1+C_4H^{-e_4})H^{1/6+2f_4}}\, dH.$\\
 $\pi_{\wtr_3\cap \wss_3}(X)$ & $\displaystyle \frac{5\kappa}{6}  \int_{0}^X \cfrac{\theta_{3}(H)}{H^{1/6}}\cdot \rho_{3}(H)\, dH \approx  \frac{5\kappa}{6}  \int_{0}^X \cfrac{s_3\cdot D_3}{(1+C_3H^{-e_3})H^{1/6+f_3}}\, dH.$\\
 $\pi_{\wtr_3\cap \wss_5}(X)$ & $\displaystyle \frac{10\kappa}{6}  \int_{0}^X \cfrac{\theta_{5}(H)}{H^{1/6}}\cdot (\rho_{5}(H)(1-\rho_5(H))-C_{1,1}^5(X))\, dH$\\
  & $\displaystyle \approx \frac{10\kappa}{6}  \int_{0}^X \cfrac{s_5\cdot D_5 \cdot (H^{f_5}-D_5)}{(1+C_5H^{-e_5})H^{1/6+2f_5}}\, dH.$\\
  $\pi_{\wtr_4\cap \wss_4}(X)$ & $\displaystyle \frac{5\kappa}{6}  \int_{0}^X \cfrac{\theta_{4}(H)}{H^{1/6}}\cdot (\rho_{4}(H)^2+C_{1,1}^4(X))\, dH \approx  \frac{5\kappa}{6}  \int_{0}^X \cfrac{s_4\cdot ((D_4)^2-0.025\cdot (H^{f_4})^2)}{(1+C_4H^{-e_4})H^{1/6+2f_4}}\, dH.$\\
    $\pi_{\wtr_5\cap \wss_5}(X)$ & $\displaystyle \frac{5\kappa}{6}  \int_{0}^X \cfrac{\theta_{5}(H)}{H^{1/6}}\cdot (\rho_{5}(H)^2+C_{1,1}^5(X))\, dH \approx  \frac{5\kappa}{6}  \int_{0}^X \cfrac{s_5\cdot (D_5)^2}{(1+C_5H^{-e_5})H^{1/6+2f_5}}\, dH.$\\
\end{tabular}
\caption{Approximate values of $\pi_{\wtr_r\cap\wss_n}(X)$ for $1\leq r\leq n\leq 5$ and $r\equiv n\bmod 2$.}\label{tab-predictrank}
\end{table}

\begin{table}[h!]
\centering
\def\arraystretch{2}
\begin{tabular}{c|ccccc}
& $r=1$ & $2$ & $3$ & $4$ & $5$\\
\hline
$\pi_{\mathcal{R}_r}(2.7\cdot 10^{10})$ & $113128929$ & $40949289$ & $6259157$ & $380519$ & $6481$ \\
Approximate value & $113133971$ &
 $41005107$ & 
 $6273138$ &
 $381272$ &
 $6438$ \\
 $|\text{Error}|$ & $5042$ & $55818$ & $13981$ & $753$ & $43$\\
 Error $\%$ &  $0.004456$ & $0.136310$ & $0.223368$ & $0.197887$ &
  $0.663477$\\
  Predicted error $\approx s_r\cdot X^{1/2}$ & $68848.72$ & $45942.96$ & $13112.47$ & $1749.97$ & $111.73$\\ 
\end{tabular}
\caption{Values of $\pi_{\mathcal{R}_r}(2.7\cdot 10^{10})$ from the BHKSWW database, the approximate values (rounded to the closest integer) given by numerical integration of the formulas in Table \ref{tab-predictrank} and Remark \ref{rem-predictrank}, the absolute error, the error as a percentage of the actual value of $\pi_{\mathcal{R}_r}$, and the size of the predicted error $s_r\cdot (2.7\cdot 10^{10})^{1/2}$ from Corollary \ref{cor-predictrank}.}
\label{tab-predictrank2}
\end{table}

Next, using the formulas from Corollary \ref{cor-predictrank2}, we can estimate the rate of growth of our rank counting functions $\pi_{\mathcal{R}_r\cap\s_n}(X)$. For instance, the next corollary does this for $r=n=1$, $2$, and $3$.

\begin{cor}
Let $\wtt\in\mathbf{T}$ be arbitrary. If we assume $H_A$, $H_B$, and Hypotheses \ref{conj-selmerratio} and \ref{conj-hasseratio}, then there are explicit computable positive constants $\lambda_r$ and $h_r$, for $n=1,2,3$ such that 
$$\mathbb{E}(\pi_{\wtr_1\cap \wss_1}(X)) \myeq \lambda_1+ \kappa s_1 X^{5/6}\cdot\sum_{m=0}^\infty \frac{(-C_1)^m}{1-(6/5)\cdot m e_1}X^{-me_1} + O(X^{1/2}),$$
$$\mathbb{E}(\pi_{\wtr_2\cap \wss_2}(X)) \myeq \lambda_2+ \kappa s_2 D_2 X^{5/6-f_2}\cdot\sum_{m=0}^\infty \frac{(-C_2)^m}{1-(6/5)\cdot (f_2+m e_2)}X^{-me_2}+ O(X^{1/2}),$$
$$\mathbb{E}(\pi_{\wtr_3\cap \wss_3}(X)) \myeq \lambda_3+\kappa s_3 D_3 X^{5/6-f_3}\cdot\sum_{m=0}^\infty \frac{(-C_3)^m}{1-(6/5)\cdot (f_3+m e_3)}X^{-me_3}+ O(X^{1/2}),$$
for any $X\geq h_r$.
\end{cor}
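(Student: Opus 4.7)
The plan is to apply Theorem \ref{thm-predictrank} (or, for $r=1$, Proposition \ref{prop-numberselcurves}), insert the explicit forms of $\theta_r$ and $\rho_r$ from Hypotheses \ref{conj-selmerratio} and \ref{conj-hasseratio}, and then expand the denominator $1/(1+C_rH^{-e_r})$ as a geometric series and integrate term by term.

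First I would reduce each case to an explicit integral. For $r=1$ every test curve of Selmer rank $1$ has $\rank=1$ by Definition \ref{defn-rank}, hence $\pi_{\wtr_1\cap\wss_1}(X)=\pi_{\wss_1}(X)$, and Proposition \ref{prop-numberselcurves}(1) together with $\theta_1(H)=s_1/(1+C_1H^{-e_1})$ gives
$$\mathbb{E}(\pi_{\wtr_1\cap\wss_1}(X))=\frac{5\kappa s_1}{6}\int_0^X \frac{dH}{H^{1/6}(1+C_1H^{-e_1})}+O(X^{1/2}).$$
For $r=2,3$, Theorem \ref{thm-predictrank} with $j=0$, $n(j)=r$, and the identity $\mathbb{E}_{1,0}^r(H)=\rho_r(H)$ from Lemma \ref{lem-equicorr3} yields
$$\mathbb{E}(\pi_{\wtr_r\cap\wss_r}(X))=\frac{5\kappa s_r D_r}{6}\int_0^X \frac{dH}{H^{1/6+f_r}(1+C_rH^{-e_r})}+O(X^{1/2}).$$

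Next I would choose any $h_r>|C_r|^{1/e_r}$, so that for every $H\ge h_r$ the expansion
$$\frac{1}{1+C_rH^{-e_r}}=\sum_{m=0}^\infty (-C_r)^m H^{-me_r}$$
converges uniformly on $[h_r,X]$ for all $X\ge h_r$. Splitting $\int_0^X=\int_0^{h_r}+\int_{h_r}^X$, the first integral is a finite constant that gets absorbed into $\lambda_r$. On $[h_r,X]$ I would interchange sum and integral (justified by uniform convergence) and use
$$\int_{h_r}^X H^{-1/6-f_r-me_r}\,dH=\frac{X^{5/6-f_r-me_r}-h_r^{5/6-f_r-me_r}}{5/6-f_r-me_r};$$
multiplying by $5\kappa s_r D_r/6$ and invoking the identity $(5\kappa/6)/(5/6-f_r-me_r)=\kappa/(1-(6/5)(f_r+me_r))$, the $X$-dependent part becomes precisely the series in the statement, while the $h_r$-dependent part is another constant collected into $\lambda_r$. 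The $r=1$ case is recovered by setting $f_r=0$ and $D_r=1$.

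The main technical obstacle is ensuring that $5/6-f_r-me_r\neq 0$ for every $m\ge 0$: from Tables \ref{tab3} and \ref{tab-rhomodel} one checks that $(5/6-f_r)/e_r$ is non-integral in each of the three cases, so no resonance occurs (otherwise the offending term would carry a $\log X$ factor instead of a power of $X$). Absolute convergence of the resulting $X$-series for $X\ge h_r$ follows from the bound
$$\left|\frac{(-C_r)^m X^{-me_r}}{1-(6/5)(f_r+me_r)}\right|=O\!\left(\frac{|C_r X^{-e_r}|^m}{m}\right),$$
which is geometrically decaying by the choice of $h_r$. This completes the plan.
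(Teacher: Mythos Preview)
Your proposal is correct and follows essentially the same approach as the paper: reduce to the integral formula from Theorem \ref{thm-predictrank} (or Proposition \ref{prop-numberselcurves} for $r=1$), choose $h_r$ so that $|C_rH^{-e_r}|<1$ on $[h_r,X]$, expand $1/(1+C_rH^{-e_r})$ as a geometric series, integrate term by term, and absorb the contributions from $[0,h_r]$ and the lower endpoint into the constant $\lambda_r$. Your write-up is in fact slightly more careful than the paper's (you justify the interchange of sum and integral, check that no exponent $5/6-f_r-me_r$ vanishes, and verify convergence of the resulting $X$-series); one small notational slip is that $\pi_{\wtr_1\cap\wss_1}(X)$ should be identified with $\pi_{\wts_1}(X)$ rather than $\pi_{\wss_1}(X)$, since we are counting inside the chosen sequence $\wtt$.
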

\begin{proof}
For each $r=1,2,3$, let $h_r> 0$ be the smallest natural number such that $|C_rh_r^{-e_r}|<1$, where $C_r$ are the constants in Hypothesis \ref{conj-selmerratio}.  Then,
$$\pi_{\wtr_r\cap \wss_r}(X) = \pi_{\wtr_r\cap \wss_r}(h_r) + \pi_{\wtr_r\cap \wss_r}([h_r,X])$$
and, by Corollary \ref{cor-predictrank} we have
$$\mathbb{E}(\pi_{\wtr_r\cap \wss_{r}}([h_0,X])) \myeq \frac{5\kappa}{6}  \binom{\lfloor \frac{r}{2} \rfloor}{0} \int_{h_0}^X \cfrac{ s_{r}\cdot \mathbb{E}_{\lfloor \frac{r}{2} \rfloor,0}^{r}(H)}{(1+C_{r}H^{-e_{r}})\cdot H^{1/6}}\, dH + O(X^{1/2}).$$
Further, since $|C_rh_r^{-e_r}|<1$, we can write
$$\frac{1}{1+C_rH^{-e_r}} = \sum_{m=0}^\infty (-C_r)^m H^{-me_r}$$
for any $H\geq h_r$. Now, $\mathbb{E}_{\lfloor \frac{r}{2} \rfloor,0}^{r}(H)=1$ for $r=1$, and by Corollary \ref{cor-rankprobabilities}, we have $\mathbb{E}_{\lfloor \frac{r}{2} \rfloor,0}^{r}(H) = \rho_r(X)$ for $r=2,3$. Further, assuming $H_B$ we have $\rho_n(X)=D_n/X^{f_n}$. Putting everything together we obtain, for instance, the following approximation formula for $\mathbb{E}(\pi_{\wtr_1\cap \wss_1}(X))$
\begin{eqnarray*} & \myeq  & \pi_{\wtr_1\cap \wss_1}(h_1)+\pi_{\wtr_1\cap \wss_1}([h_1,X]) =   \pi_{\wtr_1\cap \wss_1}(h_1) + \frac{5\kappa}{6}  \int_{h_1}^X \cfrac{\theta_{1}(H)}{H^{1/6}}\, dH + O(X^{1/2})
\end{eqnarray*}
\begin{eqnarray*}
&= & \pi_{\wtr_1\cap \wss_1}(h_1)+ \frac{5\kappa s_1}{6}  \int_{h_1}^X \sum_{m=0}^\infty (-C_r)^m H^{-\frac{1}{6}-me_1}\, dH + O(X^{1/2})\\
&= & \pi_{\wtr_1\cap \wss_1}(h_1)-\left(\kappa s_1 h_1^{5/6}\cdot\sum_{m=0}^\infty \frac{(-C_1)^m}{1-(6/5)\cdot m e_1}h_1^{-me_1}\right)+\kappa s_1 X^{5/6}\cdot\sum_{m=0}^\infty \frac{(-C_1)^m}{1-(6/5)\cdot m e_1}X^{-me_1}\\
&= & \lambda_1+ \kappa s_1 X^{5/6}\cdot\sum_{m=0}^\infty \frac{(-C_1)^m}{1-(6/5)\cdot m e_1}X^{-me_1} + O(X^{1/2}),\end{eqnarray*}
with $\lambda_1 = \pi_{\wtr_1\cap \wss_1}(h_0)-\left(\kappa s_1 h_1^{5/6}\cdot\sum_{m=0}^\infty \frac{(-C_1)^m}{1-(6/5)\cdot m e_1}h_1^{-me_1}\right)$, and we derive formulas for $r=2$ and $r=3$ in a similar manner.
\end{proof}

\section{Predicting the average rank}\label{sec-predictave}

In this section we shall  estimate the average rank of all elliptic curves of height $\leq X$:
$$\operatorname{AvgRank}_\mathcal{E}(X)=\cfrac{\sum_{E\in \mathcal{E}(X)} \rank(E(\Q))}{\pi_\mathcal{E}(X)}$$
We quote here the average rank conjecture as in \cite{poonen} (see \cite{goldfeld} for Goldfeld's version for quadratic twists).
\begin{conj}\label{conj-5050} Fix a global field $k$. Asymtotically, $50\%$ of elliptic curves over $k$ have rank $0$, and $50\%$ have rank $1$. Moreover, the average rank is $1/2$.
\end{conj}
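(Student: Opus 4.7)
The plan is to derive Conjecture \ref{conj-5050} over $\mathbb{Q}$ from the model by combining two facts about an arbitrary sequence $\wtt \in \mathbf{T}$: (i) ranks $r \geq 2$ contribute zero asymptotic density, leaving only ranks $0$ and $1$; and (ii) the limiting average rank equals $1/2$ by Theorem \ref{thm-averankintro}. Together these force both surviving densities to equal $1/2$, and the case $\wtt = \mathcal{E}$ recovers the conjecture over $\mathbb{Q}$ via Remark \ref{rem-actualellipticcurves}.

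First I would prove the vanishing for $r \geq 2$. From Theorem \ref{thm-predictrank} together with $\pi_\wtt(X) = \kappa X^{5/6} + O(X^{1/2})$, and using that $\int_0^X f(H) H^{-1/6}\,dH \sim \tfrac{6}{5} L X^{5/6}$ whenever $f(H) \to L$, one obtains
\begin{equation*}
\lim_{X\to\infty} \frac{\pi_{\wtr_r \cap \wss_{r+2j}}(X)}{\pi_\wtt(X)} = \binom{\lfloor r/2 \rfloor + j}{j} \lim_{X\to\infty} \theta_{r+2j}(X) \cdot \mathbb{E}^{r+2j}_{\lfloor r/2 \rfloor, j}(X).
\end{equation*}
Under $H_A$, $\theta_n(X) \to s_n$. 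Under $H_B$, $\rho_n(X) \to 0$, and Cauchy--Schwarz bounds each covariance $|C^n_{s,t}(X)|$ by the variance $\rho_n(X)(1-\rho_n(X))$, which also vanishes; Lemma \ref{lem-equicorr3} then yields inductively that $\mathbb{E}^n_{s,t}(X) \to 0$ when $s \geq 1$ and $\mathbb{E}^n_{0,t}(X) \to 1$. Hence the limit above is $s_{r+2j}$ when $\lfloor r/2 \rfloor = 0$ (i.e.\ $r \in \{0,1\}$) and vanishes otherwise. After interchanging the limit with the sum over $j$ by dominated convergence (with dominating sequence $s_{r+2j}$, justified by $\sum_n s_n = 1$ and the monotonicity $\theta_n(X) \leq s_n$ already assumed in Proposition \ref{prop-aveselrank}), one concludes
\begin{equation*}
\lim_{X \to \infty} \frac{\pi_{\wtr_r}(X)}{\pi_\wtt(X)} = \begin{cases} \sum_{k \geq 0} s_{2k} & \text{if } r = 0, \\ \sum_{k \geq 0} s_{2k+1} & \text{if } r = 1, \\ 0 & \text{if } r \geq 2. \end{cases}
\end{equation*}

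Finally, Theorem \ref{thm-averankintro} gives $\lim_{X\to\infty} \operatorname{AvgRank}_\wtt(X) = 1/2$. Since only ranks $0$ and $1$ have positive limiting density $p_0, p_1$, the average rank tends to $0 \cdot p_0 + 1 \cdot p_1 = p_1$, whence $p_1 = 1/2$. Combined with $p_0 + p_1 = \sum_n s_n = 1$, this yields $p_0 = 1/2$, completing the proof for arbitrary $\wtt$ and in particular for $\mathcal{E}$.

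The main obstacle is the interchange of limit and infinite sum over Selmer ranks: dominated convergence requires a uniform-in-$n$ tail bound on the densities $\pi_{\wtr_r \cap \wss_n}(X)/\pi_\wtt(X)$. The natural monotonicity hypothesis $\theta_n(X) \leq s_n$ provides such a bound and appears mild given Hypothesis $H_C$, but a rigorous verification in the style of Heath-Brown or Bhargava--Shankar tail estimates is nontrivial. A minor subsidiary point is that Theorem \ref{thm-averankintro} itself relies on the Poonen--Rains identity $\sum_{k\geq 0} s_{2k+1} = 1/2$, which is a known consequence of their explicit distribution formula and hence may be quoted.
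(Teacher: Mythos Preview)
First, a framing point: Conjecture~\ref{conj-5050} is quoted in the paper as a conjecture from the literature, not proved unconditionally. What the paper actually establishes (Corollary~\ref{cor-averank}) is that \emph{under the model hypotheses} the average rank tends to $\sum_{k\geq 0} s_{2k+1}$, and then invokes the Poonen--Rains generating function $\sum_n s_n z^n = \prod_i (1+2^{-i}z)/(1+2^{-i})$ at $z=\pm 1$ to show this sum equals $1/2$. The paper does \emph{not} explicitly compute the limiting densities of rank $0$ and rank $1$ curves; it only addresses the average-rank clause.

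Your approach therefore goes further than the paper's: you use Theorem~\ref{thm-predictrank} and the recursions of Lemma~\ref{lem-equicorr3} to compute $\lim_X \pi_{\wtr_r}(X)/\pi_{\wtt}(X)$ for each $r$, obtaining $\sum_k s_{2k}$, $\sum_k s_{2k+1}$, and $0$ for $r=0,1,\geq 2$ respectively. This is correct and is the natural complement to Corollary~\ref{cor-averank}. Your use of Cauchy--Schwarz to show the covariances $C^n_{s,t}(X)\to 0$ is sound (the variance of a product $Y_{i_1}\cdots Y_{i_s}$ of $\{0,1\}$-valued variables is at most $\mathbb{E}(Y_{i_1})=\rho_n(X)\to 0$), and the induction via Lemma~\ref{lem-equicorr3} then gives $\mathbb{E}^n_{s,t}\to 0$ for $s\geq 1$ and $\mathbb{E}^n_{0,t}\to 1$ as claimed.

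The final paragraph, however, is an unnecessary detour with a small gap. Once you have $p_1=\sum_{k\geq 0} s_{2k+1}$, you should simply invoke the Poonen--Rains generating-function identity directly (exactly as in the proof of Corollary~\ref{cor-averank}) to get $p_1=1/2$ and $p_0=\sum_k s_{2k}=1/2$. Instead you route through Theorem~\ref{thm-averankintro}, which (as you yourself note) already rests on that same identity, so nothing is gained. Worse, the claim ``the average rank tends to $0\cdot p_0+1\cdot p_1$'' tacitly interchanges $\lim_X$ with $\sum_{r\geq 0} r\cdot(\cdot)$; zero density for $r\geq 2$ does not by itself force zero contribution to the average. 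This interchange \emph{can} be justified (e.g.\ via the uniform Selmer-rank tail control behind Proposition~\ref{prop-aveselrank}), but it is simpler and cleaner to avoid it altogether by quoting the generating-function identity at the point where you have already obtained $p_0$ and $p_1$ explicitly.
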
 

We consider the average rank contributions from the subsets of test elliptic curves of each Selmer rank $n\geq 1$:
$$\operatorname{AvgRank}_{\wts_n}(X)=\cfrac{\sum_{E\in \wts_n(X)} \rank(E)}{\pi_{\wtt}(X)}$$
and later we will put them together to estimate the total average rank.

\begin{thm}\label{thm-averank}
Let $\wtt\in\mathbf{T}$ be arbitrary. Assume $H_A$ and $H_B$, and let $n\geq 1$ be fixed. Then, the expected value of $\operatorname{AvgRank}_{\wts_n}(X)$ is given on average by   
$$\frac{5\kappa}{6\pi_{\wtt}(X)} \cdot\int_1^X  \frac{\theta_n(H)}{H^{1/6}} \left((n\bmod 2) + 2\left\lfloor{\frac{n}{2}}\right\rfloor \rho_n(H)\right)\, dH +  O(X^{-1/3}),$$
where the implied error in the approximation is bounded by $C\cdot \theta_{n}^\text{sup}/X^{1/3}$, for some constant $C$ that does not depend on $n$. Moreover, the error in approximating $\operatorname{AvgRank}_{\wts_n}(X)$ by its expected value is given by, on average, by
$$\sqrt{\frac{5\kappa \lfloor n/2 \rfloor}{6\pi_{\wtt}(X)^2} \int_0^X \frac{\theta_n(H)}{H^{1/6}} (\rho_n(H)(1-\rho_n(H)) + (\lfloor n/2 \rfloor -1) C_{1,1}^n(H))\, dH + O(X^{-7/6})}.$$
\end{thm}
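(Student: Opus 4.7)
The plan is to reduce the theorem to the $m$-sample computations of Corollary \ref{cor-hasseaverank}, by first grouping test curves by height and then converting the resulting sums over heights into integrals using Corollary \ref{cor-countcurvesaverage} (through Lemma \ref{lem-inave}).

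For the expected value, I would rewrite the defining sum as a double sum, first over heights $H = 1, \dots, X$ and then over $E \in \wtt([H,H])$, using $\rank(E) \cdot \mathbf{1}_{E \in \wss_n^H}$ so that the outer range becomes $\wtt(X)$ rather than $\wts_n(X)$. Using Hypothesis $H_A$ (which gives $\operatorname{Prob}(E \in \wss_n^H) = \theta_n(H)$) together with the one-curve case of Corollary \ref{cor-hasseaverank} (which gives the conditional expectation $(n \bmod 2) + 2\lfloor n/2 \rfloor \rho_n(H)$ given $E \in \wss_n^H$), linearity of expectation yields
$$\mathbb{E}(\operatorname{AvgRank}_{\wts_n}(X)) = \frac{1}{\pi_{\wtt}(X)} \sum_{H=1}^X \pi_{\wtt}([H,H]) \, \theta_n(H) \bigl[(n \bmod 2) + 2\lfloor n/2 \rfloor \rho_n(H)\bigr].$$
Applying Lemma \ref{lem-inave} together with $\wtt \in \mathbf{T}$ and Corollary \ref{cor-countcurvesaverage}, I would replace $\pi_{\wtt}([H,H])$ by $(5\kappa/6) H^{-1/6}$ up to an $O(H^{-1/2})$ error per term; this is legitimate because the weight $\theta_n(H)[(n \bmod 2) + 2\lfloor n/2 \rfloor \rho_n(H)]$ is uniformly bounded. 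The aggregate $O(X^{1/2})$ error, once divided by $\pi_{\wtt}(X) \asymp \kappa X^{5/6}$, becomes the stated $\theta_n(X) \cdot O(X^{-1/3})$.

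For the standard error, I would compute $\operatorname{Var}(\operatorname{AvgRank}_{\wts_n}(X)) = \pi_{\wtt}(X)^{-2} \operatorname{Var}\bigl(\sum_{E \in \wtt(X)} Z_E\bigr)$ with $Z_E = \rank(E) \mathbf{1}_{E \in \wss_n}$. The summands $Z_E$ for distinct test curves are independent by the product-measure structure of Hypothesis $H_A$ together with Theorem \ref{thm-ranksbinomial}(4), so $\operatorname{Var}(\sum Z_E) = \sum \operatorname{Var}(Z_E)$. For each $E$, the per-term variance extracted from the proof of Corollary \ref{cor-hasseaverank} contributes $\theta_n(\h(E)) \lfloor n/2 \rfloor [\rho_n(\h(E))(1 - \rho_n(\h(E))) + (\lfloor n/2 \rfloor - 1) C_{1,1}^n(\h(E))]$ to $\operatorname{Var}(Z_E)$. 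Grouping by height and running the same integral-conversion argument, then taking the square root, produces the announced formula with the leftover $O(X^{1/2}) / \pi_{\wtt}(X)^2 = O(X^{-7/6})$ placed under the radical.

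The main obstacle is the bookkeeping of error terms through the averaging argument: one must verify that Lemma \ref{lem-inave} applies to these weighted sums, which reduces to noting that $\theta_n, \rho_n \in [0,1]$ so the weights are uniformly bounded, the integral replacement of $\pi_{\wtt}([H,H])$ is legitimate, and all remainders aggregate into the claimed $O(X^{-1/3})$ and $O(X^{-7/6})$ bounds.
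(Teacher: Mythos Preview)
Your proposal is correct and follows essentially the same route as the paper: apply Corollary~\ref{cor-hasseaverank} for the per-curve expected rank and variance, group by height, and use Corollary~\ref{cor-countcurvesaverage} via Lemma~\ref{lem-inave} to pass to the integral. The only cosmetic difference is that you sum over $\wtt(X)$ with the indicator $\mathbf{1}_{E\in\wss_n}$ and pick up the factor $\theta_n(H)$ from $H_A$, whereas the paper sums directly over $\wts_n(X)$ and obtains $\theta_n(H)$ when replacing $\pi_{\wts_n}([H,H])$ by its expectation; both lead to the same integrand and the same error bookkeeping.
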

\begin{proof}
We compute the expected value of the average rank in the sequence $\wtt$ as follows:
\begin{eqnarray*}
\mathbb{E}(\operatorname{AvgRank}_{\wts_n}(X)) &=& \mathbb{E}\left( \cfrac{\sum_{E\in \wts_n(X)} \rank(E)}{\pi_{\wtt}(X)}\right) = \frac{1}{\pi_{\wtt}(X)} \mathbb{E}\left( \sum_{E\in \wts_n(X)} \rank(E)\right)\\
&=& \frac{1}{\pi_{\wtt}(X)} \cdot \left(\sum_{E\in \wts_n(X)} (n\bmod 2) + 2\left\lfloor{\frac{n}{2}}\right\rfloor  \rho_n(\h(E))\right)\\
&=& \frac{1}{\pi_{\wtt}(X)} \cdot \sum_{H=1}^X \sum_{E\in \wts_n([H,H])} (n\bmod 2) + 2\left\lfloor{\frac{n}{2}}\right\rfloor  \rho_n(H)\\
&=& \frac{1}{\pi_{\wtt}(X)} \cdot \sum_{H=1}^X \pi_{\wts_n}([H,H])\cdot \left( (n\bmod 2) + 2\left\lfloor{\frac{n}{2}}\right\rfloor   \rho_n(H)\right)
\end{eqnarray*}
by Corollary \ref{cor-hasseaverank}. In particular, Definition \ref{defn-TT}, Corollary \ref{cor-countcurvesaverage}, and $H_A$ imply
\begin{eqnarray*}
  &\myeq & \frac{1}{\pi_{\wtt}(X)} \cdot \left(\frac{5\kappa}{6}\int_1^X  \frac{\theta_n(H)}{H^{1/6}} \left((n\bmod 2) + 2\left\lfloor{\frac{n}{2}}\right\rfloor \rho_n(H)\right)\, dH + O(X^{1/2})\right)\\
  &\myeq & \frac{5\kappa}{6 \pi_{\wtt}(X)} \cdot \int_1^X  \frac{\theta_n(H)}{H^{1/6}} \left((n\bmod 2) + 2\left\lfloor{\frac{n}{2}}\right\rfloor \rho_n(H)\right)\, dH + O(X^{-1/3}),\\
\end{eqnarray*}
where we have used the fact that $\wtt\in\mathbf{T}$ for the estimate $\pi_{\wtt}(X) = O(X^{5/6})$, and the implied error in the approximation is bounded by $C\cdot \theta_{n}^\text{sup}/X^{1/3}$, for some constant $C$ that does not depend on $n$. Moreover, by Corollary \ref{cor-hasseaverank}, the standard error in the approximation of the average by the expected value is given by
\begin{eqnarray*}
& & \frac{1}{\pi_{\wtt}(X)}\sqrt{\lfloor n/2 \rfloor \sum_{E\in \wts_n(X)} \rho_n(\h(E))(1-\rho_n(\h(E))) + (\lfloor n/2 \rfloor -1) C_{1,1}^n(\h(E))}\\
&\myeq &  \frac{1}{\pi_{\wtt}(X)} \sqrt{\frac{5\kappa \lfloor n/2 \rfloor}{6} \int_1^X \frac{\theta_n(H)}{H^{1/6}} (\rho_n(H)(1-\rho_n(H)) + (\lfloor n/2 \rfloor -1) C_{1,1}^n(H))\, dH + O(X^{1/2})},\\
&\myeq &  \sqrt{\frac{5\kappa \lfloor n/2 \rfloor}{6\pi_{\wtt}(X)^2} \int_1^X \frac{\theta_n(H)}{H^{1/6}} (\rho_n(H)(1-\rho_n(H)) + (\lfloor n/2 \rfloor -1) C_{1,1}^n(H))\, dH + O(X^{-7/6})}.
\end{eqnarray*}
\end{proof}

\begin{remark}\label{rem-approxave}
Let $h_n$ be the smallest positive integer such that $|C_nh_n^{-e_n}|<1$. If we assume Hypotheses \ref{conj-selmerratio} and \ref{conj-hasseratio}, then $\operatorname{AvgRank}_{\wts_n}(X)$ is given, on average, by
\begin{eqnarray*}
 &\myeq & \frac{5\kappa}{6\pi_{\wtt}(X)} \cdot\int_1^X  \frac{\theta_n(H)}{H^{1/6}} \left((n\bmod 2) + 2\left\lfloor{\frac{n}{2}}\right\rfloor \rho_n(H)\right)\, dH + O(X^{-1/3})\\
&\myeq & \frac{(5/6)\kappa s_n}{\pi_{\wtt}(X)} \cdot\int_1^X  \frac{1}{H^{1/6}(1+C_nH^{-e_n})} \left((n\bmod 2) + 2\left\lfloor{\frac{n}{2}}\right\rfloor\frac{D_n}{H^{f_n}}\right)\, dH+ O(X^{-1/3})\\
&\myeq & \frac{(5/6)\kappa s_n}{\pi_{\wtt}(X)} \cdot\left( \mu_n + \int_{h_n}^X \sum_{m=0}^\infty (-C_n)^m H^{-1/6-me_n}\left((n\bmod 2) + 2\left\lfloor{\frac{n}{2}}\right\rfloor\frac{D_n}{H^{f_n}}\right)\, dH\right)+ O(X^{-1/3})
\end{eqnarray*}
where $\mu_n = \int_{1}^{h_n} \sum_{m=0}^\infty (-C_n)^m H^{-1/6-me_n}\left((n\bmod 2) + 2\left\lfloor{\frac{n}{2}}\right\rfloor\frac{D_n}{H^{f_n}}\right)\, dH$. Thus, we get on average

\begin{eqnarray*}
&  & \frac{(5/6)\kappa s_n}{\pi_{\wtt}(X)}\cdot \left(\mu_n - \overline{n} \sum_{m=0}^\infty  \frac{(-C_n)^m}{5/6-me_n}(h_n)^{5/6-me_n} - 2\left\lfloor{\frac{n}{2}}\right\rfloor \sum_{m=0}^\infty  \frac{D_n(-C_n)^m}{5/6-f_n-me_n}(h_n)^{5/6-f_n-me_n}\right)\\
& & +\frac{(5/6)\kappa s_n}{\pi_{\wtt}(X)}\cdot \left( \overline{n} \sum_{m=0}^\infty  \frac{(-C_n)^m}{5/6-me_n}X^{5/6-me_n} + 2\left\lfloor{\frac{n}{2}}\right\rfloor \sum_{m=0}^\infty  \frac{D_n(-C_n)^m}{5/6-f_n-me_n}X^{5/6-f_n-me_n}\right)+ O(X^{-1/3}),
\end{eqnarray*}
where we have abbreviated $\displaystyle \overline{n} = (n\bmod 2)$, and below we shall write $\tau_n$ for the contents inside the first parenthesis, i.e., $\tau_n = \mu_n - \overline{n} \sum_{m=0}^\infty \ldots -  2\left\lfloor{\frac{n}{2}}\right\rfloor \sum_{m=0}^\infty \ldots$.
\begin{eqnarray*}
& \myeq & \frac{\kappa s_nX^{5/6}}{\pi_{\wtt}(X)}\cdot \left( \frac{\tau_n}{X^{5/6}} +    \sum_{m=0}^\infty  \left(\frac{(n\bmod 2)(-C_n)^m}{1-(6/5)me_n} +  X^{-f_n} \frac{2\left\lfloor{\frac{n}{2}}\right\rfloor D_n(-C_n)^m}{1-(6/5)(f_n+me_n)}\right)X^{-me_n}\right)+ O(X^{-1/3})\\
& \myeq & s_n \cdot  \left( \frac{\tau_n}{X^{5/6}} +   \sum_{m=0}^\infty  \left(\frac{(n\bmod 2)(-C_n)^m}{1-(6/5)me_n} +  X^{-f_n} \frac{2\left\lfloor{\frac{n}{2}}\right\rfloor D_n(-C_n)^m}{1-(6/5)(f_n+me_n)}\right)X^{-me_n}\right)+ O(X^{-1/3}).
\end{eqnarray*}
Hence, we obtain the following result about the average rank of (test) elliptic curves.
\end{remark}

\begin{cor}\label{cor-averank}
If we assume $H_A$ and $H_B$, and Hypotheses \ref{conj-selmerratio} and \ref{conj-hasseratio}, then there are constants $\tau_n$ such that the expected value of $\operatorname{AvgRank}_{\wtt}(X)$ is given by 
\begin{eqnarray*}
 &=& \sum_{n=1}^\infty 
\operatorname{AvgRank}_{\wts_n}(X)\\
&\myeq & \sum_{n=1}^\infty s_n \cdot  \left(\frac{\tau_n}{X^{5/6}} +  \sum_{m=0}^\infty  \left(\frac{(n\bmod 2)(-C_n)^m}{1-(6/5)me_n} +  X^{-f_n} \frac{2\left\lfloor{\frac{n}{2}}\right\rfloor D_n (-C_n)^m}{1-(6/5)(f_n+me_n)}\right)X^{-me_n}\right) + O(X^{-1/3}).
\end{eqnarray*}
with standard error $\leq \cfrac{\sum_{n=2}^\infty \sqrt{\lfloor n/2 \rfloor \cdot (\lfloor n/2 \rfloor -3/4)\cdot s_n}}{\sqrt{\kappa}X^{5/12}}$.
In particular,
$$\lim_{X\to \infty} \operatorname{AvgRank}_{\wtt}(X) \myeq \sum_{k=0}^\infty s_{2k+1}=\frac{1}{2},$$
in the sense that the expected value goes on average to $1/2$ and the standard error goes to $0$ on average as $X\to \infty$.
\end{cor}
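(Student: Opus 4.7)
The plan is to sum the per-Selmer-rank estimates of Theorem \ref{thm-averank} over all $n \geq 1$ and then execute the series expansion already carried out in Remark \ref{rem-approxave}. Since $\operatorname{AvgRank}_{\wtt}(X) = \sum_{n \geq 1}\operatorname{AvgRank}_{\wts_n}(X)$ is dominated by $\operatorname{AvgSelRank}_{\wtt}(X)$, whose expected value converges (Proposition \ref{prop-aveselrank}), I would interchange expectation and summation, apply Theorem \ref{thm-averank} termwise, and then repeat verbatim the geometric expansion $(1+C_n H^{-e_n})^{-1} = \sum_{m \geq 0}(-C_n)^m H^{-me_n}$ valid for $H \geq h_n$, where $h_n$ is the smallest positive integer such that $|C_n h_n^{-e_n}| < 1$. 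Splitting each integral at $h_n$ and folding the contribution from $[1,h_n]$ together with the $h_n$-boundary terms into the constants $\tau_n$ yields the stated series.

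For the standard error, the subsets $\wts_n(X) \subseteq \wtt(X)$ for distinct $n$ are disjoint, and under $H_A$ and $H_B$ the rank random variables for distinct test curves are independent (Theorem \ref{thm-ranksbinomial}, part (4), extended across different Selmer ranks via the product-measure structure of Hypothesis A). Hence $\Var(\operatorname{AvgRank}_{\wtt}(X)) = \sum_{n \geq 2}\Var(\operatorname{AvgRank}_{\wts_n}(X))$, with the $n=1$ term absent because every rank-$1$ test curve deterministically contributes rank $1$. Applying the variance bound of Theorem \ref{thm-averank} and estimating the integrand via $\theta_n(H) \leq s_n$ together with the bound
$$\rho_n(H)(1-\rho_n(H)) + (\lfloor n/2\rfloor - 1)C_{1,1}^n(H) \;\leq\; \lfloor n/2\rfloor - 3/4,$$
which follows from $\rho_n(1-\rho_n) \leq 1/4$ and the elementary estimate $|C_{1,1}^n| \leq \rho_n(1-\rho_n) \leq 1/4$ for covariance of $\{0,1\}$-valued variables, I would then use $\int_0^X H^{-1/6}\,dH = (6/5)X^{5/6}$ and $\pi_{\wtt}(X)^2 = \kappa^2 X^{5/3}(1+o(1))$ to obtain a per-$n$ variance of order $s_n\lfloor n/2\rfloor(\lfloor n/2\rfloor - 3/4)/(\kappa X^{5/6})$. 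Taking the square root and summing over $n \geq 2$ produces the claimed bound.

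For the limit as $X \to \infty$: within the series expression for $\mathbb{E}(\operatorname{AvgRank}_{\wtt}(X))$, the $\tau_n/X^{5/6}$ terms vanish, each summand with $m \geq 1$ vanishes due to the factor $X^{-me_n}$ with $e_n > 0$, and in the $m=0$ summand the $X^{-f_n}$ factor (with $f_n > 0$) kills the $\rho_n$-contribution, leaving only $(n \bmod 2)$. Therefore
$$\lim_{X\to\infty}\operatorname{AvgRank}_{\wtt}(X) \;=\; \sum_{n \geq 1} s_n\,(n \bmod 2) \;=\; \sum_{k \geq 0} s_{2k+1}.$$
The identity $\sum_{k \geq 0} s_{2k+1} = 1/2$ is a classical consequence of the Poonen--Rains formula: writing $s_n = c\,a_n$ with $c = \prod_{j\geq 0}(1+2^{-j})^{-1}$ and $a_n = \prod_{k=1}^n 2/(2^k - 1)$, one checks the identity $\sum_{n\geq 0}(-1)^n a_n = 0$, which together with $\sum_n s_n = 1$ forces $\sum_{n\text{ odd}} s_n = \sum_{n\text{ even}} s_n = 1/2$ (equivalently, equidistribution of root numbers). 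The main obstacle I anticipate is rigorously justifying the interchange of the limit with the infinite sum over $n$: this requires dominated convergence applied uniformly in $X$ using the super-exponential decay of $s_n$, together with control of the tails of the inner $m$-series that is uniform in $n$.
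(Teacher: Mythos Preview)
Your proposal is correct and follows the paper's approach closely: sum Theorem \ref{thm-averank} over $n$, invoke the geometric expansion of Remark \ref{rem-approxave}, bound the per-$n$ standard error, and sum. Two small points of divergence are worth flagging. First, for $\sum_{k\geq 0} s_{2k+1}=\tfrac12$ the paper uses the Poonen--Rains generating function $\sum_n s_n z^n = \prod_{i\geq 0}(1+2^{-i}z)/(1+2^{-i})$: at $z=-1$ the $i=0$ factor vanishes, giving $\sum_n(-1)^n s_n=0$ directly, which is cleaner than ``one checks $\sum(-1)^n a_n=0$.'' Second, your standard-error argument has a small internal wrinkle: you assert variance additivity across $n$, but then sum square roots rather than take the square root of the sum. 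The paper avoids this by never claiming variance additivity---it simply bounds each $\operatorname{SE}(\operatorname{AvgRank}_{\wts_n}(X))$ separately using the cruder estimate $|C_{1,1}^n|\leq 1$ (not your sharper $|C_{1,1}^n|\leq \rho_n(1-\rho_n)\leq 1/4$, which would actually give $\lfloor n/2\rfloor/4$ rather than $\lfloor n/2\rfloor - 3/4$), and then sums the resulting bounds, which is the sub-additivity of standard deviation. Your route still yields the stated bound since $\sum\sqrt{\,\cdot\,}\geq \sqrt{\sum\cdot}$, but the logic is slightly tangled.
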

\begin{proof}
The approximation of the average rank is an immediate consequence of our approximation of the contribution to the average rank coming from each Selmer rank $n$ given in Remark \ref{rem-approxave}. From the approximation, it follows that
$$\lim_{X\to \infty} \operatorname{AvgRank}_{\wtt}(X) \myeq \sum_{n=1}^\infty s_{n} \cdot (n \bmod 2) =  \sum_{k=0}^\infty s_{2k+1}.$$
Finally, we point out that, by Proposition 2.6 of \cite{poonen}, the values $s_n$ have a generating function
$$\sum_{n\geq 0} s_n z^n = \prod_{i=0}^\infty \frac{1+2^{-i}z}{1+2^{-i}}.$$
In particular, for $z=1$ we obtain that $\sum_n s_n =1$, for $z=-1$ we obtain that $\sum_n (-1)^n s_n = 0$, and therefore $\sum_{k} s_{2k+1}=\sum_{n\equiv 1 \bmod 2} s_n = \frac{1}{2} \left(\sum_n s_n -\sum_n (-1)^n s_n\right)=\frac{1}{2}.$ Let us now estimate the error in the approximation of $\operatorname{AvgRank}_{\wts_n}(X)$ using Corollary \ref{cor-hasseaverank}:

\begin{eqnarray*}
& & \lfloor n/2 \rfloor \sum_{E\in \wts(X)} \rho_n(\h(E))(1-\rho_n(\h(E))) + (\lfloor n/2 \rfloor -1) C_{1,1}^n(\h(E))\\
&\myeq &  \frac{5\kappa \lfloor n/2 \rfloor}{6} \int_1^X \frac{\theta_n(H)}{H^{1/6}} (\rho_n(H)(1-\rho_n(H)) + (\lfloor n/2 \rfloor -1) C_{1,1}^n(H))\, dH.\\
&\myeq &  \frac{5\kappa \lfloor n/2 \rfloor s_n}{6} \int_1^X \frac{1}{H^{1/6}(1+C_nH^{-e_n})} \left(\frac{D_n}{H^{f_n}}\left(1-\frac{D_n}{H^{f_n}}\right) + (\lfloor n/2 \rfloor -1) C_{1,1}^n(H)\right)\, dH.
\end{eqnarray*}
Recall that the covariance coefficient $C_{1,1}^n(X)$ is given by $\mathbb{E}(Y_1Y_2)-\mathbb{E}(Y_1)\mathbb{E}(Y_2)$, and since the random  variables $Y_i$ take only the values $0,1$, we have $0\leq \mathbb{E}(Y_1)=\rho_n(X)\leq 1$. In particular, $|C_{1,1}^n(X)|\leq 1$. Also, notice that $y(1-y)$ in the interval $[0,1]$ obtains the maximum value of $1/4$ at $y=1/2$.   Thus,
\begin{eqnarray*}
&\myeq &  \frac{5\kappa \lfloor n/2 \rfloor s_n}{6} \int_1^X \frac{1}{H^{1/6}(1+C_nH^{-e_n})} \left(\frac{D_n}{H^{f_n}}\left(1-\frac{D_n}{H^{f_n}}\right) + (\lfloor n/2 \rfloor -1) C_{1,1}^n(H)\right)\, dH\\
&\leq& \frac{5\kappa \lfloor n/2 \rfloor s_n}{6} \int_1^X \frac{1/4+ (\lfloor n/2 \rfloor -1)}{H^{1/6}}\, dH\\
&\leq & \kappa \lfloor n/2 \rfloor s_n (\lfloor n/2 \rfloor -3/4)X^{5/6} 
\end{eqnarray*}
Thus, the standard error in the approximation of $\operatorname{AvgRank}_{\wtt}(X)$ is bounded by
$$\frac{\sum_{n=2}^\infty \sqrt{\kappa\lfloor n/2 \rfloor (\lfloor n/2 \rfloor -3/4)s_nX^{5/6}}}{\pi_{\wtt}(X)}$$
Since $\pi_{\wtt}(X)=\kappa X^{5/6}+O(X^{1/2})$, it suffices to show that $\sum_{n=2}^\infty \sqrt{\lfloor n/2 \rfloor (\lfloor n/2 \rfloor -3/4)s_n}$ is convergent. Let us define $t_1=s_1$ and 
$$t_n = \frac{t_1}{2^{\frac{n(n-1)}{2}-1}}$$
for $n\geq 2$. Then, the definition of $s_n$ implies that $s_n\leq t_n$, and therefore,
$$\sum_{n=2}^N \sqrt{\lfloor n/2 \rfloor (\lfloor n/2 \rfloor -3/4)s_n} \leq \sum_{n=2}^N \frac{n}{2}\sqrt{s_n}\leq \sum_{n=2}^N \frac{n}{2}\sqrt{t_n}\leq \sum_{n=2}^N \frac{n}{2} \frac{\sqrt{t_1}}{2^{\frac{n(n-1)-2}{4}}}\leq \sum_{n=2}^N \frac{\sqrt{s_1}\cdot n}{2^{\frac{n(n-1)+2}{4}}}$$
for any $N$, and therefore $\sum_{n=2}^\infty \sqrt{\lfloor n/2 \rfloor (\lfloor n/2 \rfloor -3/4)s_n}$ is convergent. Thus, the standard error goes to $0$ on average as $X\to\infty$, as desired.
\end{proof}

\begin{remark}\label{rem-averank}
Using SageMath, in Figure \ref{fig-averank} we have plotted values of $\operatorname{AvgRank}_{\mathcal{E}}(X)$ from the BHKSSW database, and (via numerical integration) the sum of the approximations given in Theorem \ref{thm-averank} of $\operatorname{AvgRank}_{\s_n}(X)$ for $n=1,\ldots,5$. According to the database, we have
$$\operatorname{AvgRank}_{\mathcal{E}}(2.7\cdot 10^{10}) = 0.90197580$$
while our approximation gives $0.90244770$. Thus, the absolute error is $0.00047189$, which represents a $0.0523\%$ of the true value.
\end{remark}

\begin{center}
\begin{figure}[h!]
\includegraphics[width=6.6in]{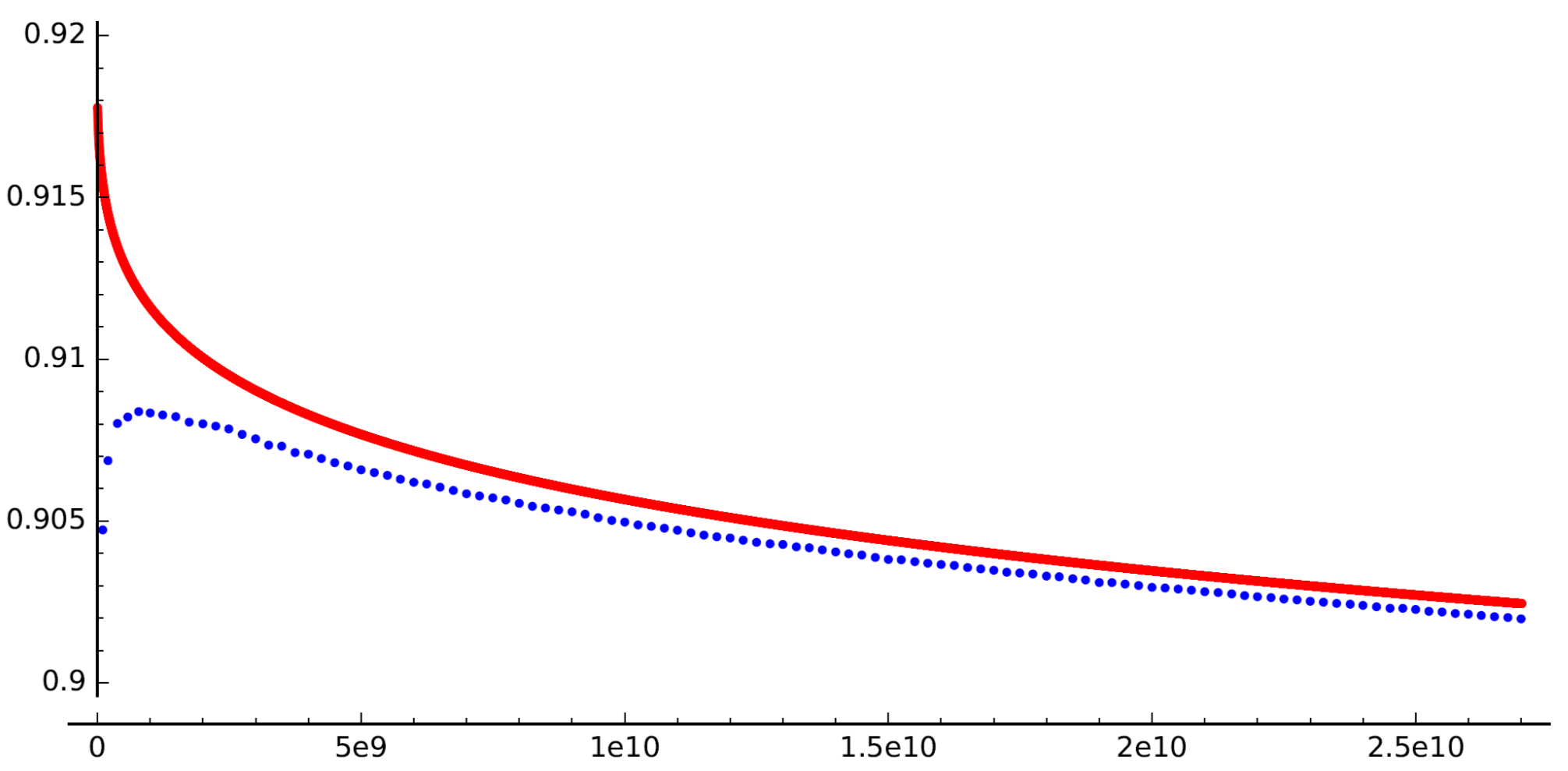}
\caption{Values of $\operatorname{AvgRank}_{\mathcal{E}}(X)$ from the BHKSSW database (blue dots), and the approximation given in Corollary \ref{cor-averank} (in red).} \label{fig-averank}
\end{figure}
\end{center}

\end{document}